\numberwithin{equation}{section}
\newtheoremstyle{mystyle}{12pt}{12pt}{\itshape}{0cm}{\bfseries}{}{1em}{}
\theoremstyle{mystyle}
\newtheorem{definition}{Definition}[section]
\newtheorem{theorem}{Theorem}[section]
\newtheorem{lemma}{Lemma}[section]
\newtheorem{corollary}{Corollary}[section]
\newtheorem{remark}{Remark}[section]
\begin{document}

\title{{Asymptotic Joint Distribution of Extreme Sample Eigenvalues and Eigenvectors in the Spiked Population Model}}
% author name
 \author{Dai Shi\footnote{Institute of Computational and Mathematical Engineering, Stanford University, Stanford, 94305}}

% date - can be empty
\date{}

\renewcommand{\thepage}{\roman{page}}
\setcounter{page}{1} %\tableofcontents\clearpage
% type out your article title
\maketitle\renewcommand{\thepage}{\arabic{page}}\setcounter{page}{1}

\begin{abstract}
In this paper, we consider a data matrix $X_N\in\mathbb{R}^{N\times p}$ where all the rows are i.i.d. samples in $\mathbb{R}^p$ of mean zero and covariance matrix $\Sigma\in\mathbb{R}^{p\times p}$. Here the population matrix $\Sigma$ is of finite rank perturbation of the identity matrix. This is the ``spiked population model'' first proposed by Johnstone in \cite{21}. As $N, p\to\infty$ but $N/p \to \gamma\in(1, \infty)$, for the sample covariance matrix $S_N := X_NX_N^T/N$, we establish the joint distribution of the largest and the smallest few packs of eigenvalues. Inside each pack, they will  behave the same as the eigenvalues drawn from a Gaussian matrix of the corresponding size. Among different packs, we also calculate the covariance between the Gaussian matrices entries. As a corollary, if all the rows of the data matrix are Gaussian, then these packs will be asymptotically independent. Also, the asymptotic behavior of sample eigenvectors are obtained. Their local fluctuation is also Gaussian with covariance explicitly calculated. 
\\
\noindent{\bf{Key Words:}} Spiked population model, Asymptotic sample spectrum 
\end{abstract}

%%%%%%%%%%%%%%%%%%%%%%%%%%%%%%%%%%%%%%%%
%%%%%%%%%%%%%%%%%%%%%%%%%%%%%%%%%%%%%%%%
%%%%%%%%%%%%%%%%%%%%%%%%%%%%%%%%%%%%%%%%
%%%
%%%  Section 1
%%%
%%%%%%%%%%%%%%%%%%%%%%%%%%%%%%%%%%%%%%%%
%%%%%%%%%%%%%%%%%%%%%%%%%%%%%%%%%%%%%%%%
%%%%%%%%%%%%%%%%%%%%%%%%%%%%%%%%%%%%%%%%
\section{Introduction}\label{sec1}
Suppose we have $N$ independently  and identically distributed samples $x_1, \ldots, x_N\in\mathbb{R}^{p}$. Here $N$ is the sample size and $p$ is the dimension of our data. We can then form the data matrix $X_N = (x_1^T, \ldots, x_N^T)^T \in\mathbb{R}^{N\times p}$ and further define its sample covariance matrix
\[
S_N := \frac{1}{N}X_N^TX_N\in\mathbb{R}^{p\times p}. 
\]
In this paper we are interested in the asymptotic joint distribution of the extreme few eigenvalues and corresponding eigenvectors of the matrix $S_N$. Below is the assumptions of our model.
\begin{itemize}
\item
All the data vectors $x_i$ are independently and identically distributed of mean zero and covariance matrix $\Sigma\in\mathbb{R}^{p\times p}$. Here $\Sigma$ is a non-random positive definite matrix. 
\item
For each  vector $x_i$, the fourth moment $\mathbb{E}|x_{ij}|^4 < \infty$ for $1\leq i\leq N, 1\leq j\leq p$. 
\item
$N, p\to\infty$ but their ratio $N/p := \gamma^2 + o(N^{-1/2})$ where $\gamma$ is a fixed amount in the interval $(1, \infty)$. 
\item
We denote $\ell_1\geq\ell_2\geq\ldots\geq\ell_p$ to be the eigenvalues of the matrix $\Sigma$. Then we assume that all of the $\ell_i$'s are equal to one except for only finite of them. That is, there exist fixed integers $r_+, r_-$ which are independent of $N, p$ such that 
\[
\ell_1 \geq \ell_2\geq\ldots\geq\ell_{r_+} > 1 > \ell_{p-r_-+1} \geq \ell_{p-r_-+2}\geq\ldots\geq\ell_p
\]
and the rest of the eigenvalues are
\[
\ell_{r_++1} = \ell_{r_++2} = \ldots = \ell_{p-r_-} = 1.
\]
\item
We can assume, without losing generality, that our true covariance matrix $\Sigma$ is a diagonal matrix. That is, if we denote $\widehat{\Sigma} := \mathrm{diag}\{\ell_1, \ldots, \ell_{r_+}, \ell_{p-r_-+1}, \ldots, \ell_p\}\in\mathbb{R}^{r_++r_-} = \mathbb{R}^r$ with $r := r_++r_-$, then we can assume $\Sigma$ to be of the form
\begin{equation}
\Sigma := \left(\begin{array}{cc}\widehat{\Sigma} & 0 \\ 0 & I_{p-r}\end{array}\right).
\end{equation}
Hence we can decompose each row of our data matrix $x_i$ to be $x_i^T = (\xi_i^T, \eta_i^T)$, where $\xi_i\in\mathbb{R}^{r}$, $\eta_i\in\mathbb{R}^{p-r}$ and $\mathrm{Cov}(\xi_i) = \widehat{\Sigma}, \mathrm{Cov}(\eta_i) = I_{p-r}$. Our next assumption is that $\eta_i$ has i.i.d. entries and is independent of $\xi$. 
\end{itemize}

The model defined above is the ``spiked population model'' proposed in \cite{21}. The unit eigenvalues represent pure noises, while the spiked eigenvalues represent true information. In real applications, we will encounter such models quite often. In mathematical imaging (see \cite{a1}), the observed spectrum of the sample covariance matrix indeed has some detached eigenvalues, representing the possible scatterers in the region. As another example, in mathematical finance (see \cite{a2}), each row of our data matrix represents the correlated returns of each stock. The sample correlation matrix has some spiked large eigenvalues, representing the main factors driving the market, and some small eigenvalues, representing the linear dependence of these factors. Other possible applications include, but not restricted to, speech recognition (see \cite{a3}), physics mixture (see \cite{a4}) and statistical learning (see \cite{a5}). 

We define $(\widehat{\lambda}^{(N)}_1, \ldots, \widehat{\lambda}^{(N)}_p)$ as the eigenvalues of the sample covariance matrix $S_N$, where $\widehat{\lambda}^{(N)}_1\geq \widehat{\lambda}^{(N)}_2 \geq \ldots\geq\widehat{\lambda}^{(N)}_p$. In the rest of the paper we will refer $\ell_i$ as the true eigenvalues and $\widehat{\lambda}^{(N)}_i$ as the sample eigenvalues. In the null case where $\Sigma = I$, a lot of properties are known.  The empirical measure of $\{\widehat{\lambda}^{(N)}_i\}_{i=1}^p$, denoted by $F_N := \sum_{i=1}^p\delta(\widehat{\lambda}^{(N)}_i)/p$, will almost surely converge in distribution to the Mar\v{c}enko-Pastur law (see \cite{13}), whose density is defined by 
\begin{equation}\label{MP law}
F(\lambda) := \frac{\gamma^2}{2\pi \lambda}\sqrt{(\lambda_+-\lambda)(\lambda-\lambda_-)}\cdot\mathbbm{1}_{\{\lambda_-\leq\lambda\leq\lambda_+\}}
\end{equation}
where $\lambda_+ := (1+\gamma^{-1})^2$ and $\lambda_- := (1-\gamma^{-1})^2$. The support of the density, $[\lambda_-, \lambda_+]$, is often called the Mar\v{c}enko-Pastur sea. Regarding the largest eigenvalue $\widehat{\lambda}^{(N)}_{\max}=\widehat{\lambda}^{(N)}_1$ and the smallest eigenvalue $\widehat{\lambda}^{(N)}_{\min} = \widehat{\lambda}^{(N)}_p$ of the sample covariance matrix $S_N$, German first proved that $\widehat{\lambda}^{(N)}_{\max}\to\lambda_+$ almost surely in \cite{14} and later Silvertein proved that $\widehat{\lambda}^{(N)}_{\min}\to\lambda_-$ almost surely in \cite{15}. That is to say, the largest and the smallest eigenvalues will converge to the corresponding edges of the Mar\v{c}enko-Pastur sea. For a second order approximation, Johansson in  \cite{18} proved that the local fluctuation of $\widehat{\lambda}^{(N)}_{\max}$, properly scaled and centered, will converge weakly to the Tracy-Widom law. Baker, Forrester and Pearce in \cite{20} also proved the similar result for the smallest eigenvalue. We note that most of these results are universal, as is proved in \cite{17}, \cite{23} and \cite{24}, to list a few.

For the spiked population model where $\Sigma\neq I$, the phenomenon becomes much more interesting. Recent research found that the non-null eigenvalues tend to pull the extreme sample eigenvalues out of the Mar\v{c}enko-Pastur sea $[\lambda_-, \lambda_+]$, provided that they are larger or smaller than certain thresholds. 

In \cite{3} Baik and Silverstein proved the almost sure limits of the extreme sample eigenvalues pulled out by the spikes. More precisely, for fixed $j$, almost surely
\begin{eqnarray}
\widehat{\lambda}^{(N)}_j & \to & \left\{
\begin{array}{ll}
\lambda_+ & \text{ if } \ell_j \leq 1+\gamma^{-1}, \\
\ell_j + \gamma^{-2}\ell_j/(\ell_j-1) & \text{ if } \ell_j > 1+\gamma^{-1} .
\end{array}
\right. \label{conv1} \\
\widehat{\lambda}^{(N)}_{p-j} & \to & \left\{
\begin{array}{ll}
\lambda_- & \text{ if } \ell_{p-j} \geq 1-\gamma^{-1}, \\
\ell_{p-j} + \gamma^{-2}\ell_{p-j}/(\ell_{p-j}-1) & \text{ if } \ell_{p-j} < 1-\gamma^{-1} .
\end{array}
\right. \label{conv2}
\end{eqnarray}
Note that this includes the case where some of the $\ell_j$(or $\ell_{p-j}$)'s are the same. In this case, the corresponding $\widehat{\lambda}^{(N)}_j$(or $\widehat{\lambda}^{(N)}_{p-j}$)'s just converge to the same limit specified in (\ref{conv1}) and (\ref{conv2}). We call these eigenvalues ``\emph{packed}''. 

But what is the second order approximation? Baik, Ben Arous and P\'{e}che in \cite{1} observed the phase transition phenomenon of the asymptotic distribution of the largest sample eigenvalue $\widehat{\lambda}^{(N)}_{\max}=\widehat{\lambda}^{(N)}_1$ in the complex Gaussian case. They proved that if $\ell_1 > 1+\gamma^{-1}$ (i.e., when $\widehat{\lambda}^{(N)}_1$ is pulled out of the sea), then the local fluctuation of $\widehat{\lambda}^{(N)}_1$ will be asymptotically the same as the largest eigenvalue of a $k\times k$ GUE matrix, where $k$ is algebraic multiplicity of $\ell_1$. Moreover, in  \cite{4} Bai and Yao obtained the joint local fluctuation of the packed sample eigenvalues --- when suitably centered and scaled, each pack of sample eigenvalues will asymptotically have the same distribution as the eigenvalues of some Gaussian matrix with the corresponding size. 

We note that similar results can also be obtained for perturbed Wigner case, see \cite{2}, \cite{6} and \cite{7} for a reference. 

However, none of these results deal with joint distribution of different packs of sample eigenvalues in our spiked population model. Probably a naive guess is that different packs are asymptotically independent. However, this is not necessarily the case.  By Bai and Yao in \cite{4} the behavior of the local fluctuation of each pack of eigenvalues can be fully described by a Gaussian matrix. Thus, in order to establish the relationship between different eigenvalue packs, we just need to calculate the correlations between entries of these different Gaussian matrices. In this paper, we establish the correlation formula, which is not necessarily zero. Moreover, a sufficient condition for that to be zero is that the first four moments of $x_1$ behave as if they are independent, i.e., for example, $\mathbb{E}(x_{1i}x_{1j})^2 = \mathbb{E} x^2_{1i}x^2_{1j}$. As a corollary, if each row in our data matrix $X$ is Gaussian, then different packs of sample eigenvalues of the sample covariance matrix $S_N = X_N^TX_N/N$ will be asymptotically independent. But this is not in general true for other distributions. We also found that the limiting behavior of the local fluctuation of these eigenvalues only depends on the first moments of $x_i$. We refer to this as the four moment principle. 

Our next interest in this paper is to derive the asymptotic behavior of the eigenvectors. Under the assumption that all the $x_i$'s are Gaussian, Paul in \cite{5} proved that the sample eigenvectors will also be inconsistent --- the angle between the sample eigenvector and the true one will converge to a nonzero constant. In this paper, we remove the Gaussian assumption and establish the local fluctuation of the angle. For this universal case, we also observed the four moment principle --- its asymptotic behavior will also depend only on the first four moments of $x_i$. 

To state our main results below, we introduce the following two notation. 
\begin{definition}
Denote $X_n$ as a random variable. We say $X_n = O_p(1)$ (or bounded in probability) if for any $\epsilon>0$, there exists some $M_\epsilon$ such that 
\[
\mathbb{P}(|X_n|>M_\epsilon) < \epsilon, \qquad \forall n\geq 1. 
\]
We say $X_n = o_p(1)$ (or decay in probability) if for any $\epsilon>0$ we always have 
\[
\lim_{n\to\infty}\mathbb{P}(|X_n|>\epsilon) = 0. 
\]
\end{definition}

The rest of this paper will be organized as follows. In Subsection \ref{sec1.1} and \ref{sec1.2} we will state our main theorems for the limiting behavior of the sample eigenvalues and eigenvectors. Their proofs will be stated in Section \ref{sec2} and Section \ref{sec3}, respectively. All these proofs rely on the central limit theorem for the bilinear form, stated and proved in Section \ref{sec4}. Hence here Section \ref{sec4} only serves as a tool and can be regarded as a self-referenced section. Finally, Section \ref{sec5} serves as an conclusion part of this paper.

%%%%%%%%%%%%%%%%%%%%%%%%%%%%%%%%%%%%%%%%
%%%%%%%%%%%%%%%%%%%%%%%%%%%%%%%%%%%%%%%%
%%%
%%%  Subsection 1.1
%%%
%%%%%%%%%%%%%%%%%%%%%%%%%%%%%%%%%%%%%%%%
%%%%%%%%%%%%%%%%%%%%%%%%%%%%%%%%%%%%%%%%
\subsection{Main result for eigenvalues}\label{sec1.1}
 
Under the assumptions of the spiked population model, we assume our true covariance matrix $\widehat{\Sigma}$ has the form
\begin{equation}
\widehat{\Sigma} = \mathrm{diag}\{\underbrace{\alpha_1, \ldots, \alpha_1}_{r_1}, \underbrace{\alpha_2, \ldots, \alpha_2}_{r_2}, \ldots, \underbrace{\alpha_q, \ldots, \alpha_q}_{r_q}\}
\end{equation}
with $\sum_{i=1}^qr_i = r$. Also we define $q^*$ such that 
\[
\alpha_1 > \alpha_2 > \ldots \alpha_{q^*} > 1 > \alpha_{q^*+1} > \ldots > \alpha_q. 
\]
Since we are only interested in the isolated eigenvalues lying outside of the Marc\v{e}nko-Pastur sea $[\lambda_-, \lambda_+]$, we can assume without losing generality that none of these $\alpha_i$'s are in the interval $[1-\gamma^{-1}, 1+\gamma^{-1}]$. Then by \cite{3} there will be $r$ isolated sample eigenvalues lying outside of the Marc\v{e}nko-Pastur sea.  Recall that the the sample eigenvalues are denoted by $\{\widehat{\lambda}^{(N)}_i\}_{i=1}^r$. Since we wish to denote both the greatest and the smallest few eigenvalues in a convenient way, with slight abuse of notations we define $\{\widehat{\lambda}^{(j)}\}_{j=1}^r$ by
\[
\widehat{\lambda}^{(j)} = \left\{
\begin{array}{ll}
\widehat{\lambda}^{(N)}_j & \text{ if } 1\leq j\leq \sum_{i=1}^{{q^*}}r_i, \\
\widehat{\lambda}^{(N)}_{p-r+j} & \text{ if } \sum_{i=1}^{{q^*}}r_i+1\leq j\leq r.
\end{array} 
\right.
\]
Then we know that these $\widehat{\lambda}^{(j)}$'s can be grouped into $q$ packs with
\[
\widehat{\lambda}^{(\sum_{i=1}^{j-1}r_i + s)} \to \rho_{\alpha_j} := \alpha_j + \frac{\gamma^{-2}\alpha_j}{\alpha_j-1}, \qquad\text{ for } 1\leq s\leq r_j. 
\]
To consider the second order approximation, for any $1\leq j\leq q$ define $Z^{(N)}_j \in\mathbb{R}^{r_j}$ by
\begin{eqnarray}
Z^{(N)}_j & = & \Big(Z^{(N)}_{j, 1}, \ldots, Z^{(N)}_{j, r_j}\Big) \\
Z^{(N)}_{j, s} & = & \sqrt{N}\Big(\widehat{\lambda}^{(\sum_{i=1}^{j-1}r_i + s)} - \rho_{\alpha_j}\Big), \qquad 1\leq s\leq r_j. 
\end{eqnarray}

Here's the asymptotic behavior of the $r$-dimensional vector $Z^{(N)} := (Z^{(N)}_1, \ldots, Z^{(N)}_q)$.

%%%%%%%%%%%%%%%%%%%%%%%%%%%%%%%%%%%%%%%%
%
%  Theorem 1
%
%%%%%%%%%%%%%%%%%%%%%%%%%%%%%%%%%%%%%%%%
\begin{theorem}\label{main theorem real}
As $N\to \infty$, $Z^{(N)}$ will weakly converge to the $r$ dimensional vector $Z := (Z_1, \ldots, Z_q)$ partitioned in the same way as $Z^{(N)}$, such that
\begin{itemize}
\item
For each $1\leq j\leq q$, $Z_j \in\mathbb{R}^{r_j}$ has the same distribution as the $r_j$ eigenvalues of a $r_j\times r_j$ Gaussian symmetric matrix $(1+\gamma^{-2}\alpha_jm_3(\rho_{\alpha_j}))^{-1}\cdot G^{(j)}$ with each entry $G^{(j)}_{st}$ being Gaussian distributed with mean zero. 
\item
The intra-matrix-covariance of these Gaussian entries are 
\begin{eqnarray*}
\mathrm{Cov}(G^{(j)}_{st}, G^{(j)}_{uv}) & = & 
\widetilde{\omega}_{jj}\Big[\mathbb{E}[\xi_{\sum_{i=1}^{j-1}r_i + s, 1}\xi_{\sum_{i=1}^{j-1}r_i+u, 1}\xi_{\sum_{i=1}^{j-1}r_i + t, 1}\xi_{\sum_{i=1}^{j-1}r_i + v, 1}] - \alpha_j^2\mathbbm{1}_{s=t, u = v}\Big] \\
&&  + (\widetilde{\theta}_{jj}-\widetilde{\omega}_{jj})\Big[\alpha_j^2\mathbbm{1}_{s=v, u=t} + \alpha_j^2\mathbbm{1}_{s=u, t=v}\Big]
\end{eqnarray*}
for any $1\leq s\leq t\leq r_j, 1\leq u\leq v \leq r_j$. 
\item
The inter-matrix-covariances of these Gaussian entries are
\begin{eqnarray*}
\mathrm{Cov}(G^{(j)}_{st}, G^{(j')}_{uv}) & = & 
\widetilde{\omega}_{jj'}\Big[\mathbb{E}[\xi_{\sum_{i=1}^{j-1}r_i + s, 1}\xi_{\sum_{i=1}^{j'-1}r_i+u, 1}\xi_{\sum_{i=1}^{j-1}r_i + t, 1}\xi_{\sum_{i=1}^{j'-1}r_i + v, 1}] - \alpha_j\alpha_{j'}\mathbbm{1}_{s=t, u = v}\Big]
\end{eqnarray*}
for any $1\leq s \leq t\leq r_j$, $1\leq u\leq v\leq r_{j'}$ and for any $1\leq j\neq j'\leq q$. 
\end{itemize}
Here $\widetilde{\omega}_{jj'}, \widetilde{\theta}_{jj'}$ are defined by
\begin{eqnarray*}
\widetilde{\omega}_{jj'} & = & \biggl(1+\frac{\gamma^{-2}}{\alpha_j-1}\biggl)\biggl(1+\frac{\gamma^{-2}}{\alpha_{j'}-1}\biggl).\\
\widetilde{\theta}_{jj'} & = & \frac{(\alpha_{j}-1+\gamma^{-2})(\alpha_{j'}-1+\gamma^{-2})}{(\alpha_j-1)(\alpha_{j'}-1)-\gamma^{-2}}.
\end{eqnarray*}
and $m_3(\rho_{\alpha_j})$ is defined in (\ref{m3})
\end{theorem}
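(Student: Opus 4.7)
The plan is to reduce Theorem \ref{main theorem real} to an $r\times r$ determinantal equation and Taylor-expand it around the almost-sure limits $\rho_{\alpha_j}$. Partition $X_N=[\Xi\;\;H]$ with $\Xi\in\mathbb{R}^{N\times r}$ collecting the spike coordinates and $H\in\mathbb{R}^{N\times(p-r)}$ collecting the pure-noise coordinates. Taking the Schur complement of the lower-right block of $S_N-\widehat{\lambda}I_p$ gives, for $\widehat{\lambda}$ outside $[\lambda_-,\lambda_+]$,
\[
\det(S_N-\widehat{\lambda}I_p)=\det\bigl(H^TH/N-\widehat{\lambda}I_{p-r}\bigr)\,\det M_N(\widehat{\lambda}),
\]
where
\[
M_N(\widehat{\lambda}):=\frac{\Xi^T\Xi}{N}-\widehat{\lambda}I_r-\frac{1}{N^2}\Xi^T H\Bigl(\frac{H^TH}{N}-\widehat{\lambda}I\Bigr)^{-1}H^T\Xi.
\]
Since the outliers stay separated from the Mar\v{c}enko--Pastur sea, the first factor is non-zero with probability tending to one, so the $r$ outliers are exactly the $r$ roots of $\det M_N(\widehat{\lambda})=0$ in a neighbourhood of $\{\rho_{\alpha_j}\}_{j=1}^{q}$.

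Entrywise laws of large numbers show that $M_N(\widehat{\lambda})$ concentrates around a block-diagonal deterministic limit whose $j$-th diagonal block is $\bigl(\alpha_j(1-\phi(\widehat{\lambda}))-\widehat{\lambda}\bigr)I_{r_j}$, with $\phi(\widehat{\lambda})=\gamma^{-2}+\gamma^{-2}\widehat{\lambda}\,m_{MP}(\widehat{\lambda})$ arising from $\tfrac{1}{N^2}\mathrm{tr}\bigl[H(H^TH/N-\widehat{\lambda}I)^{-1}H^T\bigr]$; the Baik--Silverstein limits (\ref{conv1})--(\ref{conv2}) are exactly the vanishing condition of this block at $\widehat{\lambda}=\rho_{\alpha_j}$. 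To capture the second-order behaviour I would insert the ansatz $\widehat{\lambda}=\rho_{\alpha_j}+z/\sqrt{N}$ and Schur-eliminate the blocks belonging to $\alpha_{j'}\ne\alpha_j$, which stay uniformly invertible. The remaining $r_j\times r_j$ block becomes
\[
-\frac{z}{\sqrt{N}}\bigl(1+\gamma^{-2}\alpha_j m_3(\rho_{\alpha_j})\bigr)I_{r_j}+\frac{1}{\sqrt{N}}\widetilde G^{(j)}_N+o_p(N^{-1/2}),
\]
where $\widetilde G^{(j)}_N$ is the $\sqrt{N}$-scaled fluctuation of the $j$-th diagonal block of $M_N(\rho_{\alpha_j})$ around its mean. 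Vanishing of this determinant then forces $Z^{(N)}_j$ to converge, jointly over $j$, to the eigenvalues of $(1+\gamma^{-2}\alpha_j m_3(\rho_{\alpha_j}))^{-1}G^{(j)}$, with $G^{(j)}$ the distributional limit of $\widetilde G^{(j)}_N$.

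Each entry of $\widetilde G^{(j)}_N$ splits into two pieces: a centered quadratic form $\tfrac{1}{\sqrt{N}}\sum_{k}(\xi_{k,s}\xi_{k,t}-\alpha_j\mathbbm{1}_{s=t})$ coming from $\Xi^T\Xi/N$, and a bilinear form in $\Xi$ and $H$ coming from the resolvent term. Their joint limit, taken at $\rho_{\alpha_j}$ and $\rho_{\alpha_{j'}}$ simultaneously, follows from the joint bilinear-form central limit theorem developed in Section \ref{sec4}. Computing the covariance then separates cleanly into a genuine fourth-moment contribution proportional to $\mathbb{E}[\xi_{\cdot,s}\xi_{\cdot,u}\xi_{\cdot,t}\xi_{\cdot,v}]-\alpha_j\alpha_{j'}\mathbbm{1}_{s=t,u=v}$ with weight $\widetilde\omega_{jj'}$, and a Wick-type contribution with weight $\widetilde\theta_{jj'}-\widetilde\omega_{jj'}$ which vanishes in the inter-pack case, because the associated second-order resolvent average $\mathbb{E}\bigl[(H^TH/N-\rho_{\alpha_j}I)^{-1}(H^TH/N-\rho_{\alpha_{j'}}I)^{-1}\bigr]$ only produces an additional diagonal overlap when $\rho_{\alpha_j}=\rho_{\alpha_{j'}}$. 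I expect the main obstacle to be precisely this two-parameter joint CLT, since it is the only step genuinely tying different packs together; once it is in place, reading off $\widetilde\omega_{jj'}$ and $\widetilde\theta_{jj'}$ reduces to Stieltjes-transform identities for the Mar\v{c}enko--Pastur law.
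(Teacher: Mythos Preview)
Your proposal is essentially the paper's own argument: Schur-complement reduction to the $r\times r$ equation $\det(\lambda I-X_\xi^TA_N(\lambda)X_\xi)=0$, substitution $\lambda=\rho_{\alpha_j}+z/\sqrt N$, elimination of the blocks attached to $\alpha_{j'}\neq\alpha_j$ (the paper does this by a direct determinant expansion in Lemma~\ref{det} rather than by a second Schur step, but the outcome is identical), and then the joint bilinear-form CLT of Section~\ref{sec4} applied simultaneously at all $\rho_{\alpha_j}$.

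One concrete correction, however: your explanation for why the Wick-type term drops out in the inter-pack covariance is wrong. The coefficient $\widetilde\theta_{jj'}-\widetilde\omega_{jj'}$ is \emph{not} zero for $j\neq j'$; a direct computation gives
\[
\widetilde\theta_{jj'}-\widetilde\omega_{jj'}=\frac{\gamma^{-2}(\alpha_j-1+\gamma^{-2})(\alpha_{j'}-1+\gamma^{-2})}{(\alpha_j-1)(\alpha_{j'}-1)\bigl[(\alpha_j-1)(\alpha_{j'}-1)-\gamma^{-2}\bigr]}\neq 0.
\]
What actually kills the $D_2$ contribution in Theorem~\ref{real clt} for $j\neq j'$ is that the second-moment factors $\mathbb{E}[x_{\ell 1}y_{\ell'1}]$, $\mathbb{E}[x_{\ell 1}x_{\ell'1}]$, $\mathbb{E}[y_{\ell 1}y_{\ell'1}]$ all involve products of $\xi$-coordinates from \emph{different} blocks, and these vanish because $\widehat\Sigma$ is diagonal. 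The two-parameter resolvent average is irrelevant to this vanishing.
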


\begin{remark}
In particular, if for any $j\neq j'$ and any $1\leq s, t\leq r_j$, $1\leq u, v\leq r_{j'}$, 
\begin{equation}\label{cond}
\mathbb{E}[\xi_{\sum_{i=1}^{j-1}r_i + s, 1}\xi_{\sum_{i=1}^{j'-1}r_i+u, 1}\xi_{\sum_{i=1}^{j-1}r_i + t, 1}\xi_{\sum_{i=1}^{j'-1}r_i + v, 1}]  = \alpha_j\alpha_{j'}\mathbbm{1}_{s=t, u = v},
\end{equation}
then these $q$ Gaussian matrices are independent of each other. In this case, the $q$ packs of eigenvalues are asymptotically independent of each other.  If all the $\xi_i$'s are i.i.d. Gaussian, then (\ref{cond}) holds true, as for Gaussian distribution uncorrelated-ness implies independence. However, for general distribution of $\xi_i$'s, (\ref{cond}) no longer holds true. Hence in general these $q$ packs are not independent of each other. 
\end{remark}

\begin{remark}
We observe the four moment principle in Theorem \ref{main theorem real}. The local fluctuation for the sample eigenvalues $\lambda^{(j)}$ only depend on the first four moments of $\xi_1$. The similar phenomena has been observed for many times in random matrix literature. 
\end{remark}

\subsection{Main result for eigenvectors}\label{sec1.2}
In the subsection we only consider the case where all the eigenvalues of $\widehat{\Sigma}$ are distinct. That is, all the eigenvalues of $\mathrm{Cov}(\xi)$ have multiplicity one. In this case $q=r$ and 
\[
\widehat{\Sigma} = \mathrm{diag}\{\alpha_1, \ldots, \alpha_r\}. 
\]
Again, since we are only interested in the isolated packs of eigenvalues, we can assume without losing generality that all the $\alpha_i$'s are sufficiently far away from 1, i.e. $|\alpha_i-1|>\gamma^{-1}$. 

Recall we can decompose our sample point $x_i$ as $x_i^T = (\xi_i^T, \eta_i^T)$. Here we use the same partition for the sample eigenvectors. Indeed, we define the $j$-th eigenvector by $(\widehat{u}^{(j)T}, \widehat{v}^{(j)T})^T$. That is,
\[
S_N\left(\begin{array}{c}\widehat{u}^{(j)} \\ \widehat{v}^{(j)}\end{array}\right) = \widehat{\lambda}^{(j)}\left(\begin{array}{c}\widehat{u}^{(j)} \\ \widehat{v}^{(j)}\end{array}\right), \qquad \widehat{u}^{(j)}\in\mathbb{R}^{r},\quad \widehat{v}^{(j)}\in\mathbb{R}^{p-r}
\]
where $S_N$ is the sample covariance matrix. Since the eigenvectors are unique up to scaling, we require that $\|\widehat{u}^{(j)}\|_2 = 1$ and $\widehat{u}^{(j)}_j \geq 0$. Here $\widehat{u}^{(j)}_j$ is the $j$-th entry of $\widehat{u}^{(j)}$. Also, for notational convenience, we denote $\widehat{u}^{(j)}_{-j}$ as the $(r-1)$-dimensional vector obtained by deleting the $j$-th entry of $\widehat{u}^{(j)}$.

Note that the $j$-th true eigenvector is $(e_j^T, 0^T)^T$, where $e_j = (0, \ldots, 0, 1, 0, \ldots, 0)\in\mathbb{R}^r$ is a vector of all zeros except the $j$-th entry being one. Hence intuitively we should have $\widehat{u}^{(j)}\to e_j$. This is indeed the case from the following theorem.

%%%%%%%%%%%%%%%%%%%%%%%%%%%%%%%%%%%%%%%%
%
%  Theorem 2
%
%%%%%%%%%%%%%%%%%%%%%%%%%%%%%%%%%%%%%%%%
\begin{theorem}\label{thm2}
We have $u^{(j)}\to e_j$ in probability. Furthermore, $u^{(j)}_j = 1+O_p(N^{-1})$. For the second order approximation of the $u^{(j)}_{-j}$'s we have the following result. 

Jointly, 
\begin{eqnarray*}
\sqrt{N} \cdot u^{(j)}_i & \to & \frac{\alpha_i}{\rho_{\alpha_j}(\alpha_j-\alpha_i)}G^{(j)}_i, \qquad, i\neq j\\
\sqrt{N}\cdot(\lambda^{(j)}-\rho_{\alpha_j}) & \to & \frac{1}{1+\gamma^{-2}\alpha_jm_3(\rho_{\alpha_j})}G^{(j)}_j.
\end{eqnarray*}
Here the entries $\{G^{(j)}_i\}_{i, j}$ are Gaussian distributed with mean zero and covariance
\[
\mathrm{Cov}(G^{(j)}_i, G^{(j')}_{i'}) = \widetilde{\omega}_{jj'}\Big[\mathbb{E}[\xi_i\xi_{i'}\xi_{j}\xi_{j'}]-\alpha_{i}\alpha_{i'}\mathbbm{1}_{i=j, i'=j'}\Big] + (\widetilde{\theta}_{jj'}-\widetilde{\omega}_{jj'})[\alpha_{i}\alpha_j\mathbbm{1}_{i=j', j=i'}+\alpha_{i}\alpha_j\mathbbm{1}_{i=i', j = j'}]
\]
where $\widetilde{\omega}_{jj'}, \widetilde{\theta}_{jj'}$ and $m_3(\rho_{\alpha_j})$ are defined in Theorem \ref{main theorem real}. 
\end{theorem}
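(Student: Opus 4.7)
The plan is to reduce the eigenvalue problem to the $r$-dimensional upper block. Partitioning
\[
S_N = \begin{pmatrix} A & B \\ B^T & C\end{pmatrix},\qquad A = \frac{1}{N}\sum_k\xi_k\xi_k^T,\ \ B=\frac{1}{N}\sum_k\xi_k\eta_k^T,\ \ C=\frac{1}{N}\sum_k\eta_k\eta_k^T,
\]
the eigenvalue equation for $(\widehat{u}^{(j)T},\widehat{v}^{(j)T})^T$ allows one to solve $\widehat{v}^{(j)} = (\widehat{\lambda}^{(j)}I-C)^{-1}B^T\widehat{u}^{(j)}$ and substitute back, yielding the reduced problem
\[
M(\widehat{\lambda}^{(j)})\widehat{u}^{(j)} = \widehat{\lambda}^{(j)}\widehat{u}^{(j)},\qquad M(\lambda):=A + B(\lambda I - C)^{-1}B^T.
\]
A Sherman-Morrison expansion of $C$ combined with Marchenko-Pastur concentration shows $M(\lambda)\to K(\lambda)\widehat{\Sigma}$ in probability, where $K(\lambda)$ is an explicit scalar built from $m(\lambda)$ and $\gamma$; the Baik-Silverstein defining equation is precisely $K(\rho_{\alpha_j})\alpha_j = \rho_{\alpha_j}$, and $K(\rho_{\alpha_j})\alpha_i\neq\rho_{\alpha_j}$ for $i\neq j$. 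A standard perturbation argument on the reduced eigenvalue problem then forces $\widehat{u}^{(j)}\to e_j$ in probability with each off-diagonal coordinate of size $O_p(N^{-1/2})$, and the unit-norm constraint immediately upgrades $\widehat{u}^{(j)}_j$ to $1+O_p(N^{-1})$.

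To extract the Gaussian fluctuations I would write $M(\rho_{\alpha_j}) = K(\rho_{\alpha_j})\widehat{\Sigma} + E^{(j)}$ and expand the reduced equation componentwise. Inserting $\widehat{u}^{(j)}=e_j+O_p(N^{-1/2})$ and replacing $\widehat{\lambda}^{(j)}$ by its deterministic limit $\rho_{\alpha_j}$ (justified by $\widehat{\lambda}^{(j)}-\rho_{\alpha_j}=O_p(N^{-1/2})$ from Theorem \ref{main theorem real} and the Lipschitz dependence of the resolvent on $\lambda$), the $i$-th row for $i\neq j$ collapses to
\[
\bigl(\rho_{\alpha_j}-K(\rho_{\alpha_j})\alpha_i\bigr)\widehat{u}^{(j)}_i = E^{(j)}_{ij} + o_p(N^{-1/2}),
\]
and solving exhibits $\sqrt{N}\widehat{u}^{(j)}_i$ as a deterministic multiple of $\sqrt{N}E^{(j)}_{ij}$; simplifying the coefficient using $K(\rho_{\alpha_j})\alpha_j=\rho_{\alpha_j}$ recovers the stated prefactor $\alpha_i/[\rho_{\alpha_j}(\alpha_j-\alpha_i)]$. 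Expanding the $j$-th row to next order and solving for $\widehat{\lambda}^{(j)}-\rho_{\alpha_j}$ produces the second display, with the coefficient $1/(1+\gamma^{-2}\alpha_j m_3(\rho_{\alpha_j}))$ coming from the derivative of $\lambda\mapsto\lambda-K(\lambda)\alpha_j$ at $\lambda=\rho_{\alpha_j}$. Defining $G^{(j)}_i$ as the appropriately renormalised $E^{(j)}_{ij}$, the joint Gaussian limit of the family $\{G^{(j)}_i\}_{i,j}$ is then supplied by the bilinear CLT of Section \ref{sec4}.

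The main obstacle is the computation of the inter-index covariance $\mathrm{Cov}(G^{(j)}_i,G^{(j')}_{i'})$ for $j\neq j'$. Each $E^{(j)}_{ij}$ decomposes into two contributions: a purely $\xi$-driven part, coming from $A_{ij}$ together with the diagonal $k=l$ terms of $B(\rho_{\alpha_j}I-C)^{-1}B^T$, and a bilinear part coming from the off-diagonal $k\neq l$ terms. The first contribution produces the $\widetilde{\omega}_{jj'}\bigl[\mathbb{E}[\xi_i\xi_{i'}\xi_j\xi_{j'}]-\alpha_i\alpha_{i'}\mathbbm{1}\bigr]$ piece directly by fourth-moment matching; the second requires the two-parameter trace identity
\[
\frac{1}{N^2}\mathrm{tr}\bigl[(\rho_{\alpha_j}I-C)^{-1}(\rho_{\alpha_{j'}}I-C)^{-1}\bigr]\ \longrightarrow\ \text{explicit function of }\alpha_j,\alpha_{j'},\gamma,
\]
whose evaluation against the Marchenko-Pastur density produces the denominator $(\alpha_j-1)(\alpha_{j'}-1)-\gamma^{-2}$ and thus the $(\widetilde{\theta}_{jj'}-\widetilde{\omega}_{jj'})$ factor. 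Once this two-resolvent trace is in place, the remainder is algebraic bookkeeping against the bilinear CLT and reproduces the covariance formula stated in Theorem \ref{thm2}.
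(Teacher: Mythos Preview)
Your proposal is correct and follows essentially the same route as the paper: the block reduction to $M(\lambda)=X_\xi^TA_N(\lambda)X_\xi$, the perturbation argument giving $\widehat u^{(j)}\to e_j$ with the $O_p(N^{-1/2})$ off-diagonal and $1+O_p(N^{-1})$ diagonal bounds, and the componentwise expansion feeding into the bilinear CLT of Section~\ref{sec4} are exactly Lemma~\ref{error bound} and equations (\ref{better})--(\ref{ff}). The only places where the paper is slightly more explicit are the replacement of $\widehat\lambda^{(j)}$ by $\rho_{\alpha_j}$, which is isolated as Lemma~\ref{randomA} (a two-term resolvent expansion plus Corollary~\ref{point}) rather than a bare Lipschitz argument, and the covariance inputs $\widetilde\omega_{jj'},\widetilde\theta_{jj'}$, which are precomputed once in Lemmas~\ref{property1}--\ref{property2} rather than re-derived via your diagonal/off-diagonal split.
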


\begin{remark}
If in particular $\mathbb{E}[\xi_i\xi^3_{j}] =  0$ for some $i\neq j$, then the local fluctuation of $u^{(j)}_{-j}$ and $\lambda^{(j)}$ will be asymptotically independent. This condition is satisfied if we assume that the distribution of the sample points $x_i$ are i.i.d. Gaussian. But this is not necessarily true for general distributions. 
\end{remark}

From Theorem \ref{thm2}, we observe that the $\widehat{u}$ part will be asymptotically consistent. But what about the whole eigenvector $(\widehat{u}^T, \widehat{v}^T)^T?$ The next theorem shows that the whole eigenvector will not be consistent --- the angle between the sample eigenvector and the true eigenvector will converge to a non-vanishing constant.

%%%%%%%%%%%%%%%%%%%%%%%%%%%%%%%%%%%%%%%%
%
%  Theorem 3
%
%%%%%%%%%%%%%%%%%%%%%%%%%%%%%%%%%%%%%%%%
\begin{theorem}\label{thm3}
Denote $\widehat{\theta}^{(j)}$ as the angle between the $j$-th sample eigenvector $(\widehat{u}^{(j)^T}, \widehat{v}^{(j)^T})^T$ and the $j$-th true eigenvector $(e_j^T, 0^T)^T$. Then for all $1\leq j\leq r$, we have that jointly 
\begin{multline*}
\Big(1+\gamma^{-2}m_3(\rho_{\alpha_j})\alpha_j\Big)^{3/2}\cdot\sqrt{N}\biggl(\cos\widehat{\theta}^{(j)} - \frac{1}{\sqrt{1+\gamma^{-2}m_3(\rho_{\alpha_j})\alpha_j}}\biggl) \\
\stackrel{D}{\to}\frac{\gamma^{-2}m_4(\rho_{\alpha_j})\alpha_j}{1+\gamma^{-2}m_3(\rho_{\alpha_j})\alpha_j}G_j + H_j
\end{multline*}
where $\{G_{j}, H_{j'}\}_{j=1}^r$ are jointly normal with mean zero and covariance
\begin{eqnarray*}
\mathrm{Cov}(G_j, G_{j'}) & = & \widetilde{\omega}_{jj'}[\mathbb{E}\xi_j^2\xi_{j'}^2 - \alpha_{j}\alpha_{j'}] + 2(\widetilde{\theta}_{jj'}-\widetilde{\omega}_{jj'})\alpha_j^2\mathbbm{1}_{j=j'}, \\
\mathrm{Cov}(H_j, H_{j'}) & = & \widetilde{\zeta}_{jj'}[\mathbb{E}\xi_j^2\xi_{j'}^2 - \alpha_{j}\alpha_{j'}] + 2(\widetilde{\tau}_{jj'}-\widetilde{\zeta}_{jj'})\alpha_j^2\mathbbm{1}_{j=j'}, \\
\mathrm{Cov}(G_{j}, H_{j'}) & = & \widetilde{\kappa}_{jj'}[\mathbb{E}\xi_j^2\xi_{j'}^2 - \alpha_{j}\alpha_{j'}] + 2(\widetilde{\mu}_{jj'}-\widetilde{\kappa}_{jj'})\alpha_j^2\mathbbm{1}_{j=j'}. \\
\end{eqnarray*}
Here $\widetilde{\omega}_{jj'}, \widetilde{\theta}_{jj'}. \widetilde{\zeta}_{jj'}, \widetilde{\tau}_{jj'}, \widetilde{\kappa}_{jj'}$ and $\widetilde{\mu}_{jj'}$ are defined in (\ref{real1}), (\ref{real2}), (\ref{zzeta}), (\ref{ttau}), (\ref{kkappa}) and (\ref{mmu}), respectively. The functions $m_3(\rho_{\alpha_j}), m_4(\rho_{\alpha_j})$ are defined in (\ref{m3}) and (\ref{m4}).
\end{theorem}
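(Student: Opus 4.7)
\textbf{Reduction to $V_j := \|\hat{v}^{(j)}\|_2^2$.} Since the true $j$th eigenvector of $\Sigma$ is $(e_j^T,0^T)^T$ and we have normalized $\|\hat u^{(j)}\|_2 = 1$, the cosine of the angle is $\cos\hat\theta^{(j)} = \hat u^{(j)}_j/\sqrt{1+V_j}$. Theorem \ref{thm2} gives $\hat u^{(j)}_j = 1+O_p(N^{-1})$, so the numerator contributes $o_p(N^{-1/2})$ and the delta method applied to $x\mapsto 1/\sqrt{1+x}$ yields
\[
(1+V_{j,\infty})^{3/2}\sqrt N\bigl(\cos\hat\theta^{(j)} - (1+V_{j,\infty})^{-1/2}\bigr) = -\tfrac{1}{2}\sqrt N(V_j - V_{j,\infty}) + o_p(1),
\]
with $V_{j,\infty} := \gamma^{-2}m_3(\rho_{\alpha_j})\alpha_j$. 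The theorem thus reduces to a joint CLT for $\sqrt N(V_j - V_{j,\infty})$ together with the eigenvalue fluctuations, which explains both the $(1+V_{j,\infty})^{3/2}$ prefactor and the overall sign appearing in the statement.

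\textbf{Resolvent representation of $V_j$.} Decompose $X_N = (\Xi,H)$ with $\Xi\in\mathbb R^{N\times r}$ and $H\in\mathbb R^{N\times(p-r)}$ independent of $\Xi$. The lower block of the eigenvalue equation yields $\hat v^{(j)} = N^{-1}(\hat\lambda^{(j)}I_{p-r} - H^TH/N)^{-1}H^T\Xi\,\hat u^{(j)}$. Combining the swap identity $H(zI_{p-r} - H^TH/N)^{-1} = (zI_N - HH^T/N)^{-1}H$ with the resolvent identity $\tilde R(z)^2\cdot HH^T/N = z\tilde R(z)^2 - \tilde R(z)$, where $\tilde R(z) := (zI_N - HH^T/N)^{-1}$, we obtain $V_j = N^{-1}\hat u^{(j)T}\Xi^T B(\hat\lambda^{(j)})\Xi\,\hat u^{(j)}$ with kernel $B(z) := z\tilde R(z)^2 - \tilde R(z)$. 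From Theorem \ref{thm2}, $\hat u^{(j)}_i = O_p(N^{-1/2})$ for $i\neq j$, while the off-diagonal entry $N^{-1}\xi^{(j)T}B(z)\xi^{(k)}$ for $j\neq k$ is a centered cross bilinear form in independent columns of $\Xi$ of order $O_p(N^{-1/2})$. Hence only the diagonal contribution survives at the relevant scale,
\[
V_j = \tfrac{1}{N}\xi^{(j)T}B(\hat\lambda^{(j)})\xi^{(j)} + O_p(N^{-1}).
\]

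\textbf{Splitting the bilinear and eigenvalue fluctuations.} Let $\phi(z) := \lim_N N^{-1}\mathbb E\,\mathrm{tr} B(z) = -zm'(z)-m(z)$, with $m$ the Mar\v{c}enko--Pastur Stieltjes transform of $HH^T/N$. The CLT for linear spectral statistics gives $N^{-1}\mathrm{tr} B(z) = \phi(z) + O_p(N^{-1})$, negligible at scale $\sqrt N$. Conditioning on $H$ and using $\mathbb E[\xi^{(j)T}B\xi^{(j)}\mid H] = \alpha_j\,\mathrm{tr} B$, we obtain
\[
\sqrt N(V_j - V_{j,\infty}) = H_j^{\mathrm{raw}} + \alpha_j\phi'(\rho_{\alpha_j})\cdot\sqrt N(\hat\lambda^{(j)} - \rho_{\alpha_j}) + o_p(1),
\]
where $H_j^{\mathrm{raw}} := \sqrt N\bigl[N^{-1}\xi^{(j)T}B(\rho_{\alpha_j})\xi^{(j)} - \alpha_j N^{-1}\mathrm{tr} B(\rho_{\alpha_j})\bigr]$. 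The cost of freezing $z$ at $\rho_{\alpha_j}$ inside the bilinear form equals $\partial_z[\cdots]_{z_*}\cdot(\hat\lambda^{(j)}-\rho_{\alpha_j})$, which is $O_p(N^{-1/2})\cdot O_p(N^{-1/2})$ and hence negligible after $\sqrt N$ rescaling.

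\textbf{Joint CLT and matching of constants.} Both $H_j^{\mathrm{raw}}$ and, via the bilinear representation underlying Theorem \ref{main theorem real}, the eigenvalue fluctuations $\sqrt N(\hat\lambda^{(j)} - \rho_{\alpha_j})$, are bilinear forms in the columns $\xi^{(1)},\ldots,\xi^{(r)}$ against deterministic-limit kernels. The central limit theorem for bilinear forms from Section \ref{sec4} applies jointly to all $j$, yielding joint Gaussian convergence of $(H_1^{\mathrm{raw}},\ldots,H_r^{\mathrm{raw}}, G_1,\ldots,G_r)$. Setting $H_j := -\tfrac{1}{2}H_j^{\mathrm{raw}}$ and computing $\phi'(\rho_{\alpha_j})$ from the Mar\v{c}enko--Pastur fixed-point equation identifies $\alpha_j\phi'(\rho_{\alpha_j}) = -2\gamma^{-2}m_4(\rho_{\alpha_j})\alpha_j$, which together with $\sqrt N(\hat\lambda^{(j)}-\rho_{\alpha_j}) \to G_j/(1+\gamma^{-2}\alpha_j m_3(\rho_{\alpha_j}))$ from Theorem \ref{thm2} reproduces exactly the stated coefficient $\gamma^{-2}m_4(\rho_{\alpha_j})\alpha_j/(1+\gamma^{-2}m_3(\rho_{\alpha_j})\alpha_j)$ in front of $G_j$. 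The covariances $\widetilde\zeta_{jj'},\widetilde\tau_{jj'}$ of $(H_j,H_{j'})$ and $\widetilde\kappa_{jj'},\widetilde\mu_{jj'}$ of $(G_j,H_{j'})$ are then read off by evaluating the normalized traces $N^{-1}\mathrm{tr}[B(\rho_{\alpha_j})B(\rho_{\alpha_{j'}})]$, $N^{-1}\sum_i B(\rho_{\alpha_j})_{ii}B(\rho_{\alpha_{j'}})_{ii}$, and the analogous cross-traces between $B(\rho_{\alpha_j})$ and the kernel of the eigenvalue bilinear form, at two possibly distinct spectral points. I expect this trace-bookkeeping to be the main obstacle: each of the four covariance constants must be expanded in $m$ and $m'$ and reduced to the compact closed forms in (\ref{zzeta})--(\ref{mmu}), a careful but purely mechanical calculation driven by the Mar\v{c}enko--Pastur fixed-point identity.
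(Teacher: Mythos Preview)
Your approach is essentially identical to the paper's: both express $\|\hat v^{(j)}\|_2^2$ as the bilinear form $\hat u^{(j)T}X_\xi^T C_N(\hat\lambda^{(j)}) X_\xi\,\hat u^{(j)}$ with $C_N(z)=X_\eta(zI-X_\eta^TX_\eta)^{-2}X_\eta^T$ (your $B(z)$), Taylor-expand its normalized trace in $\hat\lambda^{(j)}$ to split off the eigenvalue-fluctuation term, and then apply the bilinear CLT of Section~\ref{sec4} jointly with the kernel $A_N$ to obtain the Gaussian limit and read off $\widetilde\zeta,\widetilde\tau,\widetilde\kappa,\widetilde\mu$ from the corresponding trace limits. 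One arithmetic caveat: since $\phi(z)=\gamma^{-2}m_3(z)$ and $m_3'=-m_4$, one has $\alpha_j\phi'(\rho_{\alpha_j})=-\gamma^{-2}m_4(\rho_{\alpha_j})\alpha_j$, not $-2\gamma^{-2}m_4(\rho_{\alpha_j})\alpha_j$; the spurious factor of $2$ you inserted exactly cancels the $\tfrac12$ from the delta method that the paper itself drops at~(\ref{cosbeta}).
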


\begin{remark}
Here $G_j$ is exactly the same as $G^{(j)}_j$ in Theorem \ref{thm2}. This can build the correlation between the local fluctuation of the sample eigenvectors and the angles. 
\end{remark}

\begin{remark}
Once again, in Theorem \ref{thm2} and Theorem \ref{thm3} we observed the four moment principle. The local fluctuation of the sample eigenvectors as well as the angles only depends on the first four moments of $\xi_i$. 
\end{remark}

%%%%%%%%%%%%%%%%%%%%%%%%%%%%%%%%%%%%%%%%
%%%%%%%%%%%%%%%%%%%%%%%%%%%%%%%%%%%%%%%%
%%%%%%%%%%%%%%%%%%%%%%%%%%%%%%%%%%%%%%%%
%%%
%%%  Section 2
%%%
%%%%%%%%%%%%%%%%%%%%%%%%%%%%%%%%%%%%%%%%
%%%%%%%%%%%%%%%%%%%%%%%%%%%%%%%%%%%%%%%%
%%%%%%%%%%%%%%%%%%%%%%%%%%%%%%%%%%%%%%%%
\section{Asymptotic behavior for eigenvalues}\label{sec2}

Following the notation in Section \ref{sec1}, each sample point can be decomposed as $x_i^T = (\xi_i^T, \eta_i^T)$. Our data matrix $X_N$ has the form
\begin{equation}\label{eqn2.1}
X_N = \left(\begin{array}{c}x_1^T \\ \vdots \\ x_N^T\end{array}\right) = \left(\begin{array}{cc}\xi_1^T & \eta_1^T \\ \vdots & \vdots \\ \xi_N^T & \eta^T_N \end{array}\right),
\end{equation}
where $\xi_i, \eta_i$ are independent, $\mathrm{Cov}(\xi_i) = \widehat{\Sigma}$ and $\mathrm{Cov}(\eta_i) = I$. We introduce the notation 
\begin{equation}\label{eqn2.2}
X_\xi = \frac{1}{\sqrt{N}}\left(\begin{array}{c}\xi_1^T \\ \vdots \\ \xi_N^T\end{array}\right), \qquad  X_\eta = \frac{1}{\sqrt{N}}\left(\begin{array}{c}\eta_1^T \\ \vdots \\ \eta_N^T\end{array}\right)
\end{equation}
Then $X_N = \sqrt{N}(X_\xi, X_\eta)$. Our sample covariance matrix is then
\begin{equation}\label{eqn2.3}
S_N = \frac{1}{N}X_N^TX_N = \left(\begin{array}{cc}X_\xi^TX_\xi & X_\xi^TX_\eta \\ X_\eta^TX_\xi & X_\eta^TX_\eta\end{array}\right).
\end{equation}
To calculate its eigenvalue, we calculate 
\begin{eqnarray}
\det(\lambda I - S_N) & = & \det\left(\begin{array}{cc}\lambda I- X_\xi^TX_\xi & -X_\xi^TX_\eta \\ -X_\eta^TX_\xi & \lambda I-X_\eta^TX_\eta \end{array}\right) \nonumber \\
& = & \det\left(\begin{array}{cc}\lambda I- X_\xi^TA_N(\lambda)X_\xi & 0 \\ 0 & \lambda I-X_\eta^TX_\eta \end{array}\right)  \label{eqn2.4}
\end{eqnarray}
where
\begin{equation}\label{eqn2.5}
A_N(\lambda) = I + X_\eta(\lambda I-X_\eta^TX_\eta)^{-1}X^T_\eta. 
\end{equation}
Since $X_\eta$ consists of pure noise. By \cite{13} the eigenvalues of $X_\eta^TX_\eta$ will converge to the Marc\v{e}nko-Pastur law, with support $[\lambda_-, \lambda_+]$. However, as $N\to\infty$, if $\ell_j > 1+\gamma^{-1}$ by \cite{3} we have $\widehat{\lambda}^{(j)}\to \ell_j+\gamma^{-2}\ell_j/(\ell_j-1)$ which does not lie in the Marc\v{e}nko-Pastur sea $[\lambda_-, \lambda_+]$. The similar statement is true for the smallest few eigenvalues.  Hence we know that almost surely $\det(\widehat{\lambda}^{(j)}I-X_\eta^TX_\eta)\neq 0$. This implies that, without losing generality, we can safely assume that $\det(\widehat{\lambda}^{(j)} I - X_\xi^TA_N(\lambda^{(j)})X_\xi) = 0$. Hence from now on we can restrict our attention on the equation 
\begin{equation}\label{maineqn}
\det(\lambda I - X_\xi^TA_N(\lambda)X_\xi) = 0.
\end{equation} 

As the first observation, from (\ref{eqn2.5}), the matrix $A_N(\lambda)$ is a function of $\lambda$ and $X_\eta$ only. If $\lambda$ is non-random, then $A_N(\lambda)$ is independent of $X_\xi$. Moreover, $X_\xi^TA_N(\lambda)X_\xi$ is a bilinear form of the $\xi_i$'s. In Section \ref{sec4}, we will derive a central limit theorem for such bilinear forms, which will be the core of the whole proof. Indeed, we can prove in subsection \ref{sec2.2} and \ref{sec2.3} that, for fixed $\lambda$, $X_\xi^TA_N(\lambda)X_\xi$ will converge to a diagonal matrix with Gaussian local fluctuations. Based on this, we can get the first and second order approximation of $\widehat{\lambda}^{(j)}$. 

Now let
\begin{eqnarray}
\mathrm{Cov}(\xi_i) & = & \mathrm{diag}\{\ell_1, \ell_2, \ldots, \ell_r\} \nonumber\\
& = & \mathrm{diag}\{\underbrace{\alpha_1, \ldots, \alpha_1}_{r_1}, \underbrace{\alpha_2, \ldots, \alpha_2}_{r_2}, \ldots, \underbrace{\alpha_q, \ldots, \alpha_q}_{r_q}\}
\end{eqnarray}
Here $r_1 + \ldots + r_q = r$. Also for convenience we define the index set
\begin{equation}
I_j := \{i \in\{1, \ldots, r\}: \ell_i = \alpha_j\}, \qquad j = 1, \ldots, q. 
\end{equation}
We are interested in the eigenvalue in the $j$-th pack, namely, $\lambda^{(r_1+\ldots + r_{j-1} + i)}$ for $1\leq i\leq r_j$. If $|\alpha_j-1|>\gamma^{-1}$, then we know almost surely as $N\to\infty$
\begin{equation}
\lambda^{(r_1+\ldots + r_{j-1} + i)} \to \rho_{\alpha_j} := \alpha_j + \frac{\gamma^{-2}\alpha_j}{\alpha_j-1}.
\end{equation}
The local fluctuation is of order $N^{-1/2}$. Hence in our main equation (\ref{maineqn}) we set 
\begin{equation}
\lambda = \rho_{\alpha_j} + \frac{x_j}{\sqrt{N}}. 
\end{equation}
where $x_j = O(1)$ is a non-random number independent of $N$. 

This time 
\begin{equation}\label{part12}
\lambda I - X_\xi^TA_N(\lambda)X_\xi = \biggl(\lambda I - \frac{1}{N}\mathrm{tr}(A_N(\lambda))\widehat{\Sigma}\biggl) - \biggl( X_\xi^TA_N(\lambda)X_\xi - \frac{1}{N}\mathrm{tr}(A_N(\lambda))\widehat{\Sigma}\biggl)
\end{equation}
can be written as the sum of two parts. For the first part,
$\lambda I - \frac{1}{N}\mathrm{tr}(A_N(\lambda))\widehat{\Sigma}$ is a diagonal matrix. For the second part, we shall use Theorem \ref{real clt} to  prove that $\sqrt{N}\cdot(X_\xi^TA_N(\lambda)X_\xi - \frac{1}{N}\mathrm{tr}(A_N(\lambda))\widehat{\Sigma})$ is approximately Gaussian. But first, we need to derive some properties of $A_N(\lambda)$ to ensure that all the conditions in Theorem \ref{real clt} are satisfied.

%%%%%%%%%%%%%%%%%%%%%%%%%%%%%%%%%%%%%%%%
%%%%%%%%%%%%%%%%%%%%%%%%%%%%%%%%%%%%%%%%
%%%
%%%  Subsection 2.1
%%%
%%%%%%%%%%%%%%%%%%%%%%%%%%%%%%%%%%%%%%%%
%%%%%%%%%%%%%%%%%%%%%%%%%%%%%%%%%%%%%%%%
\subsection{Properties of $A_N(\lambda)$}\label{sec2.1}
In this subsection, we derive some lemmas of our matrix $A_N(\lambda)$, which will be served as some preparation steps for the main theorem. 

First we introduce some constants. Let $[\lambda_-, \lambda_+]$ be the Marc\v{e}nko-Pastur sea. for $1\leq j\leq q$ define
\begin{eqnarray}
m_1(\rho_{\alpha_{j}}) & = & \int_{\lambda_-}^{\lambda_+}\frac{x}{\rho_{\alpha_{j}}-x}F(x)dx= \frac{1}{\alpha_j-1}, \label{m1} \\
m_2(\rho_{\alpha_{j}}, \rho_{\alpha_{j'}}) & = & \int_{\lambda_-}^{\lambda_+}\frac{x^2}{(\rho_{\alpha_{j}}-x)(\rho_{\alpha_{j'}}-x)}F(x)dx \nonumber \\
& = & \frac{(\alpha_{j}-1)(\alpha_{j'}-1) + \gamma^{-2}\alpha_{j}\alpha_{j'}-\gamma^{-2}}{(\alpha_{j}-1)(\alpha_{j'}-1)((\alpha_{j}-1)(\alpha_{j'}-1)-\gamma^{-2})}\label{m2}\\
m_3(\rho_{\alpha_{j}}) & = & \int_{\lambda_-}^{\lambda_+}\frac{x}{(\rho_{\alpha_{j}}-x)^2}F(x)dx = \frac{1}{(\alpha_{j}-1)^2-\gamma^{-2}}.\label{m3} 
\end{eqnarray} 
Here $F(x)$ is the density of the Marc\v{e}nko-Pastur law. 

%%%%%%%%%%%%%%%%%%%%%%%%%%%%%%%%%%%%%%%%
%
%  Property 1
%
%%%%%%%%%%%%%%%%%%%%%%%%%%%%%%%%%%%%%%%%
\begin{lemma}\label{property1}
Let $A_N(\lambda) = \{a^{(N)}_{st}(\lambda)\}_{s, t=1}^N$, where $\lambda = \rho_{\alpha_{j}} + x_j/\sqrt{N}$ for some fixed $x_j$. We have
\begin{equation}
\frac{1}{N}\sum_{s=1}^Na^{(N)}_{ss}(\lambda)a^{(N)}_{ss}(\lambda') \stackrel{p}{\to} \biggl(1+\frac{\gamma^{-2}[1+m_1(\rho_{\alpha_{j}})]}{\rho_{\alpha_{j}}-\gamma^{-2}[1+m_1(\rho_{\alpha_{j}})]}\biggl)\biggl(1+\frac{\gamma^{-2}[1+m_1(\rho_{\alpha_{j'}})]}{\rho_{\alpha_{j'}}-\gamma^{-2}[1+m_1(\rho_{\alpha_{j'}})]}\biggl). 
\end{equation}
\end{lemma}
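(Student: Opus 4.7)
\textbf{Proof plan for Lemma \ref{property1}.}

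My plan is to compute each diagonal entry $a^{(N)}_{ss}(\lambda)$ explicitly via a rank-one perturbation of $X_\eta^T X_\eta$, identify the pointwise limit through the Stieltjes transform of the Marchenko--Pastur law, and then pass from pointwise convergence of each summand to convergence of the arithmetic mean by exploiting the exchangeability of the rows of $X_\eta$.

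First I would write $X_\eta^T X_\eta = X_\eta^{(s)T}X_\eta^{(s)} + \tfrac{1}{N}\eta_s\eta_s^T$, where $X_\eta^{(s)}$ is $X_\eta$ with the $s$-th row deleted, and set $C_s(\lambda):=(\lambda I_{p-r}-X_\eta^{(s)T}X_\eta^{(s)})^{-1}$, which is independent of $\eta_s$. The Sherman--Morrison identity then simplifies the $(s,s)$-entry of $I+X_\eta(\lambda I-X_\eta^T X_\eta)^{-1}X_\eta^T$ to
\[
a^{(N)}_{ss}(\lambda) \;=\; \frac{1}{1-\beta_s(\lambda)}, \qquad \beta_s(\lambda):=\frac{1}{N}\eta_s^T C_s(\lambda)\eta_s.
\]
Because $\eta_s$ has identity covariance and is independent of $C_s(\lambda)$, the standard $L^2$ concentration estimate for quadratic forms (using only finite fourth moments) gives $\beta_s(\lambda)=\tfrac{1}{N}\mathrm{tr}\,C_s(\lambda)+O_p(N^{-1/2})$, and a rank-one estimate then replaces $C_s$ by $(\lambda I-X_\eta^T X_\eta)^{-1}$ with error $O(N^{-1})$ in the normalized trace. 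The Marchenko--Pastur theorem for $X_\eta^T X_\eta$ (aspect ratio $(p-r)/N\to\gamma^{-2}$), combined with continuity of the Stieltjes transform off $[\lambda_-,\lambda_+]$ and the fact that $\lambda=\rho_{\alpha_j}+x_j/\sqrt N\to\rho_{\alpha_j}$, yields
\[
\beta_s(\lambda)\stackrel{p}{\to}\gamma^{-2}\int_{\lambda_-}^{\lambda_+}\frac{F(x)\,dx}{\rho_{\alpha_j}-x}=\frac{\gamma^{-2}(1+m_1(\rho_{\alpha_j}))}{\rho_{\alpha_j}},
\]
the last equality coming from $\int\frac{F(x)\,dx}{\rho-x}=\frac{1}{\rho}\int\frac{(\rho-x)+x}{\rho-x}F(x)\,dx=\frac{1}{\rho}(1+m_1(\rho))$.

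A short calculation using $m_1(\rho_{\alpha_j})=1/(\alpha_j-1)$ and $\rho_{\alpha_j}=\alpha_j+\gamma^{-2}\alpha_j/(\alpha_j-1)$ gives $\rho_{\alpha_j}-\gamma^{-2}(1+m_1(\rho_{\alpha_j}))=\alpha_j>0$, so $1-\beta_s(\lambda)$ is bounded away from zero in probability, and the continuous mapping theorem yields
\[
a^{(N)}_{ss}(\lambda)\stackrel{p}{\to} L_j:=1+\frac{\gamma^{-2}(1+m_1(\rho_{\alpha_j}))}{\rho_{\alpha_j}-\gamma^{-2}(1+m_1(\rho_{\alpha_j}))},
\]
and analogously $a^{(N)}_{ss}(\lambda')\stackrel{p}{\to}L_{j'}$, matching the two factors on the right-hand side of the lemma. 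After a high-probability truncation on the denominator, $|a^{(N)}_{ss}(\lambda)|$ is uniformly bounded in $s$, so the product $V_s:=a^{(N)}_{ss}(\lambda)a^{(N)}_{ss}(\lambda')$ converges in $L^2$ to $L_j L_{j'}$ for every $s$.

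Finally, exchangeability of the rows of $X_\eta$ makes both $\mathbb{E}[V_s]$ and $\mathrm{Cov}(V_s,V_t)$ ($s\neq t$) independent of the indices. The first tends to $L_jL_{j'}$ by the $L^2$-convergence just noted, and the second tends to zero since $V_1,V_2$ are bounded in $L^2$ and both converge in probability to the same deterministic limit. Therefore
\[
\mathrm{Var}\Big(\tfrac{1}{N}\textstyle\sum_s V_s\Big)=\tfrac{1}{N}\mathrm{Var}(V_1)+\tfrac{N-1}{N}\mathrm{Cov}(V_1,V_2)\longrightarrow 0,
\]
and the claimed convergence in probability follows. The chief obstacle is the uniform $L^\infty$ control of $a^{(N)}_{ss}(\lambda)$ across $s$: one must rule out the rare event that some eigenvalue of $X_\eta^{(s)T}X_\eta^{(s)}$ lands close to $\rho_{\alpha_j}$. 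This is the step where I would invoke a Bai--Silverstein-type edge bound under the bare fourth-moment assumption, guaranteeing that the extreme eigenvalues of $X_\eta^T X_\eta$ remain inside $[\lambda_-,\lambda_+]+o_p(1)$ and hence $\|C_s(\lambda)\|=O_p(1)$ uniformly in $s$.
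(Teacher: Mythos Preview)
Your approach is correct and follows the same architecture as the paper: compute each diagonal entry $a^{(N)}_{ss}(\lambda)$ via the leave-one-out/Sherman--Morrison identity, identify its deterministic limit through the Mar\v{c}enko--Pastur Stieltjes transform, and then use exchangeability of the rows of $X_\eta$ to pass from single-term convergence to convergence of the arithmetic mean. Your derivation of the pointwise limit is in fact more self-contained than the paper's, which simply cites Lemma~6.1 of \cite{4} for both the convergence $a^{(N)}_{ss}(\lambda)\stackrel{p}{\to}C_j$ and the moment bound $\sup_N\mathbb{E}\bigl(a^{(N)}_{11}(\lambda)\bigr)^4<\infty$.

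Where you diverge is in the step from single-term to average. The paper's route is shorter and sidesteps exactly the obstacle you flag at the end. Instead of truncating the denominator and running a variance--covariance argument, the paper uses the fourth-moment bound to obtain uniform integrability of the product $a^{(N)}_{11}(\lambda)a^{(N)}_{11}(\lambda')$, so that convergence in probability upgrades to $L^1$ convergence:
\[
\mathbb{E}\bigl|a^{(N)}_{11}(\lambda)a^{(N)}_{11}(\lambda')-C_jC_{j'}\bigr|\longrightarrow 0.
\]
Exchangeability then gives the conclusion in one line via the triangle inequality,
\[
\mathbb{E}\biggl|\frac{1}{N}\sum_{s=1}^N a^{(N)}_{ss}(\lambda)a^{(N)}_{ss}(\lambda')-C_jC_{j'}\biggl|
\;\le\;\mathbb{E}\bigl|a^{(N)}_{11}(\lambda)a^{(N)}_{11}(\lambda')-C_jC_{j'}\bigr|\to 0,
\]
with no need to control $\mathrm{Cov}(V_1,V_2)$ or to invoke a uniform-in-$s$ edge bound. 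Your variance argument works once the truncation is justified, but the $L^4$-moment/uniform-integrability shortcut is both cleaner and lighter on hypotheses.
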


%%%%%%%%%%%%%%%%%%%%%%%%%%%%%%%%%%%%%%%%
%
%  Proof of Property 1
%
%%%%%%%%%%%%%%%%%%%%%%%%%%%%%%%%%%%%%%%%
\begin{proof}[Proof for Lemma \ref{property1}]
The proof will be similar to Lemma 6.1 in \cite{4}. By that lemma we have for each $1\leq s\leq N$,
\begin{equation}
a^{(N)}_{ss}(\lambda) \stackrel{p}{\to} 1+\frac{\gamma^{-2}[1+m_1(\rho_{\alpha_{j}})]}{\rho_{\alpha_{j}}-\gamma^{-2}[1+m_1(\rho_{\alpha_{j}})]} := C_j 
\end{equation}
where $C_j$ serves as a shorthand notation. Also we have for each $\lambda$, by Lemma 6.1 in \cite{4},
\[
\sup_N\mathbb{E}\Big(a^{(N)}_{11}(\lambda)\Big)^4 < \infty. 
\]
Hence for $\lambda$ and $\lambda'$ and any $M>0$ we have 
\[
\mathbb{E}\Big[|a^{(N)}_{11}(\lambda)a^{(N)}_{ii}(\lambda')|\mathbbm{1}_{|a^{(N)}_{11}(\lambda)a^{(N)}_{ii}(\lambda')|> M}\Big] \leq \frac{1}{2M}\biggl[\sup_N\mathbb{E}\Big(a^{(N)}_{11}(\lambda)\Big)^4+\sup_N\mathbb{E}\Big(a^{(N)}_{11}(\lambda')\Big)^4\biggl]
\]
Hence $a^{(N)}_{11}(\lambda)a^{(N)}_{11}(\lambda')$ is uniformly integrable. Hence 
\begin{eqnarray*}
&&\mathbb{E}\biggl|\frac{1}{N}\sum_{s=1}^Na^{(N)}_{ss}(\lambda)a^{(N)}_{ss}(\lambda')-C_jC_{j'}\biggl| 
\leq\mathbb{E}\Big|a^{(N)}_{11}(\lambda)a^{(N)}_{11}(\lambda')-C_jC_{j'}\Big| 
\to 0. 
\end{eqnarray*}
\end{proof}

%%%%%%%%%%%%%%%%%%%%%%%%%%%%%%%%%%%%%%%%
%
%  Property 2
%
%%%%%%%%%%%%%%%%%%%%%%%%%%%%%%%%%%%%%%%%
\begin{lemma}\label{property2}
For the same setting in Lemma \ref{property1} we have
\begin{equation}
\frac{1}{N}\sum_{s, t=1}^Na_{st}^{(N)}(\lambda)a_{st}^{(N)}(\lambda') \stackrel{p}{\to} 1+\gamma^{-2}m_1(\rho_{\alpha_{j}}) + \gamma^{-2}m_1(\rho_{\alpha_{j'}}) + \gamma^{-2}m_2(\alpha_{j}, \rho_{\alpha_{j'}}).
\end{equation}
\end{lemma}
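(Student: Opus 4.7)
The plan is to recognize the double sum as a trace: since $A_N(\lambda)$ is symmetric,
$$\sum_{s,t=1}^N a_{st}^{(N)}(\lambda) a_{st}^{(N)}(\lambda') \;=\; \mathrm{tr}\bigl(A_N(\lambda)\, A_N(\lambda')\bigr),$$
and then to reduce this to spectral integrals of $X_\eta^T X_\eta$ against the Mar\v{c}enko--Pastur law. Writing $B_\eta(\lambda) := X_\eta(\lambda I - X_\eta^T X_\eta)^{-1}X_\eta^T$ so that $A_N(\lambda) = I + B_\eta(\lambda)$, I would expand
$$A_N(\lambda)\,A_N(\lambda') \;=\; I + B_\eta(\lambda) + B_\eta(\lambda') + B_\eta(\lambda)\,B_\eta(\lambda'),$$
and, after using the cyclic property of trace to move $X_\eta^T$ past $X_\eta$, obtain
$$\frac{1}{N}\mathrm{tr}\bigl(A_N(\lambda) A_N(\lambda')\bigr) \;=\; 1 \;+\; \frac{1}{N}\sum_i \frac{\mu_i}{\lambda - \mu_i} \;+\; \frac{1}{N}\sum_i \frac{\mu_i}{\lambda' - \mu_i} \;+\; \frac{1}{N}\sum_i \frac{\mu_i^2}{(\lambda-\mu_i)(\lambda'-\mu_i)},$$
where $\{\mu_i\}$ are the eigenvalues of $X_\eta^T X_\eta$.

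Next, since $X_\eta$ has i.i.d.\ mean-zero unit-variance entries and $(p-r)/N \to \gamma^{-2}$, the empirical spectral distribution of $X_\eta^T X_\eta$ converges almost surely to the Mar\v{c}enko--Pastur law $F$ supported on $[\lambda_-,\lambda_+]$, and by Geman and Silverstein its extreme eigenvalues converge to $\lambda_\pm$. Because $\lambda = \rho_{\alpha_j} + x_j/\sqrt{N}$ with $\rho_{\alpha_j}$ lying strictly outside $[\lambda_-,\lambda_+]$ (as guaranteed by $|\alpha_j-1|>\gamma^{-1}$), the test functions $x/(\lambda-x)$ and $x^2/[(\lambda-x)(\lambda'-x)]$ are continuous and bounded on a neighborhood of $[\lambda_-,\lambda_+]$ for all sufficiently large $N$, uniformly in the $1/\sqrt{N}$ perturbation. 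Standard weak-convergence/uniform-integrability arguments (exactly in the style already used in Lemma \ref{property1}) then give
$$\frac{1}{N}\sum_i \frac{\mu_i}{\lambda - \mu_i} \stackrel{p}{\to} \gamma^{-2} m_1(\rho_{\alpha_j}), \qquad \frac{1}{N}\sum_i \frac{\mu_i^2}{(\lambda-\mu_i)(\lambda'-\mu_i)} \stackrel{p}{\to} \gamma^{-2} m_2(\rho_{\alpha_j},\rho_{\alpha_{j'}}),$$
with an analogous limit for the $\lambda'$ term. Summing the four pieces produces the announced limit.

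The main obstacle is purely a matter of care rather than depth: one has to verify that the $x_j/\sqrt{N}$ shift in $\lambda$ is harmless and that the factors $(\lambda-\mu_i)^{-1}$ remain uniformly bounded on the spectrum of $X_\eta^T X_\eta$, so that the Mar\v{c}enko--Pastur convergence can be integrated against the relevant rational test functions. Both points follow from the strict separation of $\rho_{\alpha_j}$ from the Mar\v{c}enko--Pastur sea combined with the almost-sure convergence of the extreme eigenvalues $\mu_{\max},\mu_{\min}$ of $X_\eta^T X_\eta$ to $\lambda_\pm$; a minor truncation or dominated-convergence step turns the weak spectral convergence into convergence of these integrals in probability, completing the proof.
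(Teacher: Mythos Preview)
Your proposal is correct and follows essentially the same route as the paper: both identify the double sum as $\frac{1}{N}\mathrm{tr}(A_N(\lambda)A_N(\lambda'))$, write $A_N(\lambda)=I+B_N(\lambda)$ with $B_N(\lambda)=X_\eta(\lambda I-X_\eta^TX_\eta)^{-1}X_\eta^T$, and expand into four terms whose normalized traces converge to $1$, $\gamma^{-2}m_1(\rho_{\alpha_j})$, $\gamma^{-2}m_1(\rho_{\alpha_{j'}})$, and $\gamma^{-2}m_2(\rho_{\alpha_j},\rho_{\alpha_{j'}})$. The only cosmetic difference is that the paper cites Lemma~6.1 of \cite{4} for the convergence of $\frac{1}{N}\mathrm{tr}\,B_N(\lambda)$ and $\frac{1}{N}\mathrm{tr}\,B_N^T(\lambda)B_N(\lambda')$, whereas you spell out the underlying spectral argument via Mar\v{c}enko--Pastur convergence and the edge rigidity of $\mu_{\min},\mu_{\max}$.
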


%%%%%%%%%%%%%%%%%%%%%%%%%%%%%%%%%%%%%%%%
%
%  Proof of Property 2
%
%%%%%%%%%%%%%%%%%%%%%%%%%%%%%%%%%%%%%%%%
\begin{proof}[Proof for Lemma \ref{property2}]
We have $A_N(\lambda) = I+B_N(\lambda)$ for $B_N(\lambda) := X_\eta(\lambda I - X_\eta^TX_\eta)^{-1}X_\eta^T$. By Lemma 6.1 in \cite{4} we have 
\begin{eqnarray*}
\frac{1}{N}\mathrm{tr}(B_N(\lambda)) & \stackrel{p}{\to} & \gamma^{-2}m_1(\rho_{\alpha_{j}}), \\
\frac{1}{N}\mathrm{tr}(B_N^T(\lambda)B_N(\lambda)) & \stackrel{p}{\to} & \gamma^{-2}m_2(\rho_{\alpha_{j}}, \rho_{\alpha_{j'}}).
\end{eqnarray*}
Hence 
\begin{eqnarray*}
\frac{1}{N}\sum_{s, t=1}^Na_{st}^{(N)}(\lambda)a_{uv}^{(N)}(\lambda') & = & \frac{1}{N}\mathrm{tr}(A_N^T(\lambda)A_N(\lambda')) \\
& = & 1+\frac{1}{N}\mathrm{tr}(B_N(\lambda))+\frac{1}{N}\mathrm{tr}(B_N(\lambda')) + \frac{1}{N}\mathrm{tr}(B_N^T(\lambda')B_N(\lambda)) \\
& \stackrel{p}{\to} & 1+\gamma^{-2}m_1(\rho_{\alpha_{j}}) + \gamma^{-2}m_1(\rho_{\alpha_{j'}}) + \gamma^{-2}m_2(\rho_{\alpha_{j}}, \rho_{\alpha_{j'}}).
\end{eqnarray*}
\end{proof}

%%%%%%%%%%%%%%%%%%%%%%%%%%%%%%%%%%%%%%%%
%
%  Property 3
%
%%%%%%%%%%%%%%%%%%%%%%%%%%%%%%%%%%%%%%%%
\begin{lemma}\label{property3}
For the same setting in Lemma \ref{property1},
there exists some constants $M>0$ and $c>0$ such that
\begin{equation}
\mathbb{P}\biggl(\max_{s, t=1}^N|a^{(N)}_{st}(\lambda)|>M\biggl) \leq \exp(-cN), \qquad \text{ for all } N\geq1. 
\end{equation}
\end{lemma}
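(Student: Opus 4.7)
The plan is to control every entry $|a^{(N)}_{st}(\lambda)|$ by the operator norm $\|A_N(\lambda)\|_{\mathrm{op}}$ and then to bound the operator norm through a spectral gap argument. Since $|a^{(N)}_{st}(\lambda)| = |e_s^T A_N(\lambda) e_t| \leq \|A_N(\lambda)\|_{\mathrm{op}}$ uniformly in $s,t$, it suffices to find a constant $M$ and an event of probability at least $1-e^{-cN}$ on which $\|A_N(\lambda)\|_{\mathrm{op}} \leq M$.

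To make the operator norm explicit, I would use the singular value decomposition $X_\eta = U\Sigma V^T$ and let $\sigma_1^2 \geq \cdots \geq \sigma_{p-r}^2$ denote the non-zero eigenvalues of $X_\eta^T X_\eta$. A direct computation shows that the symmetric matrix $A_N(\lambda) = I + X_\eta(\lambda I - X_\eta^T X_\eta)^{-1}X_\eta^T$ has eigenvalues $\lambda/(\lambda - \sigma_i^2)$ for $i=1,\ldots,p-r$, together with the eigenvalue $1$ of multiplicity $N-(p-r)$ coming from the kernel of $X_\eta^T$. Therefore
\[
\|A_N(\lambda)\|_{\mathrm{op}} \leq \max\Big(1,\ \frac{|\lambda|}{\mathrm{dist}(\lambda,\ \mathrm{spec}(X_\eta^T X_\eta))}\Big).
\]
Next, since $\rho_{\alpha_j}$ lies strictly outside the Mar\v{c}enko-Pastur sea $[\lambda_-,\lambda_+]$ by the standing assumption $|\alpha_j-1|>\gamma^{-1}$, and $\lambda = \rho_{\alpha_j} + x_j/\sqrt{N}$ stays within $O(N^{-1/2})$ of $\rho_{\alpha_j}$, one can pick $\epsilon>0$ so small that $\mathrm{dist}(\lambda,\ [\lambda_- - \epsilon,\ \lambda_+ + \epsilon]) \geq \epsilon$ for every $N$ large enough. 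Hence on the event
\[
E_N := \big\{\mathrm{spec}(X_\eta^T X_\eta) \subseteq [\lambda_- - \epsilon,\ \lambda_+ + \epsilon]\big\},
\]
one has $\|A_N(\lambda)\|_{\mathrm{op}} \leq |\lambda|/\epsilon$, which is uniformly bounded, and the claimed entrywise bound follows as soon as $\mathbb{P}(E_N^c) \leq e^{-cN}$.

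The hard part is therefore the exponential large deviation estimate $\mathbb{P}(E_N^c) \leq e^{-cN}$ for the extreme eigenvalues of the pure-noise Wishart matrix $X_\eta^T X_\eta$. The standard route, under the fourth moment assumption, is first to truncate the entries of $\eta$ at an appropriately chosen level so that the probability of any entry exceeding the threshold is negligible after a union bound, and then to invoke exponential concentration for the extreme eigenvalues of the truncated Wishart ensemble, drawing on the arguments of Bai and Silverstein \cite{15} and the universality results in \cite{17}, \cite{23}, \cite{24}. Calibrating the truncation level so that the (at best polynomial) loss from the truncation step is absorbed by the exponential rate coming from the spectral concentration step is the most delicate piece of the argument; everything after $E_N$ is produced is purely deterministic linear algebra.
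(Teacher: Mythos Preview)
Your approach is essentially the same as the paper's: bound every entry by the operator norm, compute the spectrum of $A_N(\lambda)$ (the paper works with $B_N=A_N-I$ instead, which is cosmetic) via the SVD of $X_\eta$, and reduce everything to an exponential large deviation estimate for the extreme eigenvalues of the pure-noise Wishart matrix $X_\eta^T X_\eta$. The paper simply asserts this last estimate without proof, so your flagging of it as the delicate step---and the tension between an $e^{-cN}$ rate and having only a fourth moment assumption on the entries---is well placed.
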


%%%%%%%%%%%%%%%%%%%%%%%%%%%%%%%%%%%%%%%%
%
%  Proof of Property 3
%
%%%%%%%%%%%%%%%%%%%%%%%%%%%%%%%%%%%%%%%%
\begin{proof}[Proof for Lemma \ref{property3}]
Following the notation in Lemma \ref{property2} we still set $A_N(\lambda) = I+B_N(\lambda)$. By the singular value decomposition of $B_N$ we know
\[
\|B_N(\lambda)\|_2 \stackrel{p}{\to} \max\biggl(\frac{\lambda_+^2}{(\lambda_+-\rho_{\alpha_{j}})^2}, \frac{\lambda_-^2}{(\lambda_--\rho_{\alpha_{j}})^2}\biggl).
\]
Also, by the limiting distribution of extreme eigenvalues of we have, for any \\$M > \max({\lambda_+^2}/{(\lambda_+-\rho_{\alpha_{j}})^2}, {\lambda_-^2}/{(\lambda_--\rho_{\alpha_{j}})^2})$, there exists some constant $c>0$ such that 
\[
\mathbb{P}(\|B_N(\lambda)\|_2>M) \leq \exp(-cN). 
\]
For any $s, t \in \{1, 2, \ldots, N\}$, denote $e_s = (0, \ldots, 1, \ldots, 0)^T$ to be the column vector with all zero entries except a single one in the $s$-th entry. Hence
\[
|a^{(N)}_{st}(\lambda)| \leq |(B_N(\lambda))_{st}|+1\leq \|e_sB_N(\lambda)e_t\|_2+1 \leq \|B_N(\lambda)\|_2+1. 
\]
This gives 
\[
\mathbb{P}\biggl(\max_{s, t=1}^N|a^{(N)}_{st}(\lambda)|>M+1\biggl)\leq \mathbb{P}(\|B_N(\lambda)\|_2>M) \leq \exp(-cN). 
\]
\end{proof}

Lemma \ref{property1} --- Lemma \ref{property3} imply that for $\lambda = \rho_{\alpha_{j}}+x_{j}/\sqrt{N}$ with fixed $x_j$, $A_N(\lambda)$ satisfies all the assumptions in  Theorem \ref{real clt}.
Equipped with these lemmas, now we can realize our promise --- to establish a central limit theorem for $X_\xi^TA_N(\lambda)X_\xi$ as well as the local fluctuations of $\widehat{\lambda}^{(j)}$. This is to be done in the next two subsections.

%%%%%%%%%%%%%%%%%%%%%%%%%%%%%%%%%%%%%%%%
%%%%%%%%%%%%%%%%%%%%%%%%%%%%%%%%%%%%%%%%
%%%
%%%  Subsection 2.2
%%%
%%%%%%%%%%%%%%%%%%%%%%%%%%%%%%%%%%%%%%%%
%%%%%%%%%%%%%%%%%%%%%%%%%%%%%%%%%%%%%%%%
\subsection{Formula for the $j$-th pack of the sample eigenvalues}\label{sec2.2}

Recall in (\ref{part12}) we decomposed $\lambda I-X_\xi^TA_N(\lambda)X_\xi$ into two parts. By Corollary \ref{point}, the second part $X_\xi^TA_N(\lambda)X_\xi - \mathrm{tr}(A_N(\lambda))\widehat{\Sigma}/N\to 0$ in probability. Let's find the limit for the first part $\lambda I - \mathrm{tr}(A_N(\lambda))\widehat{\Sigma}$. This is a diagonal matrix and for $s\in \{1, 2\ldots, r\}$, if $s\in I_i$ for some $i\neq j$ we have
\begin{eqnarray*}
\Big(\lambda I - \mathrm{tr}(A_N(\lambda))\widehat{\Sigma}\Big)_{ss} & = & \rho_{\alpha_j} - (1+\gamma^{-2}m_1(\rho_{\alpha_j}))\alpha_i + N^{-1/2} \\
& = & \rho_{\alpha_j} - \rho_{\alpha_j}\frac{\alpha_i}{\alpha_j} + N^{-1/2}. 
\end{eqnarray*}
Here I used the equality $\rho_{\alpha_j} = (1+m_1(\rho_{\alpha_j}){\alpha_j}))\alpha_j$ for any $j =1, 2, \ldots, q$. For $s\in I_j$, we have 
\begin{eqnarray}
\Big(\lambda I - \mathrm{tr}(A_N(\lambda))\widehat{\Sigma}\Big)_{ss} & = & \rho_{\alpha_j} + \frac{x_j}{\sqrt{N}} - (1+\gamma^{-2}m_1(\rho_{\alpha_j} + x_j/\sqrt{N}))\alpha_j + o_p(N^{-1/2}) \nonumber\\
& = & \frac{1}{\sqrt{N}}\Big(1+\gamma^{-2}\alpha_jm_3(\rho_{\alpha_j})\Big)x_j + o_p(N^{-1/2}). \label{x}
\end{eqnarray}
In summary, by Corollary \ref{point} for any $0<\kappa<1/2$ we have
\begin{equation}\label{kappa}
\Big(\lambda I - \mathrm{tr}(A_N(\lambda))\widehat{\Sigma}\Big)_{st} = \left\{
\begin{array}{ll}
(\rho_{\alpha_j}-\rho_{\alpha_j}\frac{\alpha_i}{\alpha_j})\mathbbm{1}_{s=t} + o_p(N^{-\kappa}) & \text{ if } s\in I_i \text{ for some } i\neq j \\
o_p(N^{-\kappa}) & \text{ if } s\in I_j
\end{array}
\right.
\end{equation}

Hence the matrix $\lambda I-X_\xi^TA_N(\lambda)X_\xi$ will converge to a diagonal matrix with the block $I_j\times I_j$ being all zeros. This is quite intuitive as $\rho_j$ is the limit for the $j$-th pack of the sample eigenvalues.

By analyzing the limit of $\lambda I-X_\xi^TA_N(\lambda)X_\xi$, we can only obtain the first order approximation of the $j$-th pack of the sample eigenvalues. In order to get the second order approximation of the $j$-th pack, we need to obtain the second order approximation of the matrix $\lambda I-X_\xi^TA_N(\lambda)X_\xi$. Thus we define the matrix $G^{(j)}\in\mathbb{R}^{r\times r}$ such that
\begin{equation}
\Big(G^{(j)}\Big)_{st} = \left\{\begin{array}{ll}\Big(\lambda I-X_\xi^TA_N(\lambda)X_\xi\Big)_{st} & \text{ if } s\notin I_j \\ \sqrt{N}\Big(\lambda I-X_\xi^TA_N(\lambda)X_\xi\Big)_{st} & \text{ if } s\in I_j\end{array}\right.
\end{equation}
That is, we define $G^{(j)}$ by multiplying the rows $I_j$ of $\lambda I-X_\xi^TA_N(\lambda)X_\xi$ by $\sqrt{N}$, leaving the rest of the rows unchanged. 

Since $\det(G^{(j)}) = \det(\lambda I  - X_\xi^TA_N(\lambda)X_\xi)\cdot N^{-r_j/2}$, in order to get the limiting behavior of the $j$-th pack of sample eigenvalues, we can turn to analyze the roots of the equation $\det(G^{(j)}) = 0.$ We know that its rows indexed by $\{1, \ldots, r\}\backslash I_j$ are asymptotically diagonal. For the rows indexed by $I_j$, they will be dense. By our central limit theorem in Section \ref{sec4}, they will be of order $O_p(1)$. Hence intuitively regarding the determinant of $G^{(j)}$ we have the following lemma.

%%%%%%%%%%%%%%%%%%%%%%%%%%%%%%%%%%%%%%%%
%
%  Determinant Lemma
%
%%%%%%%%%%%%%%%%%%%%%%%%%%%%%%%%%%%%%%%%
\begin{lemma}\label{det}
If $\lambda = \rho_{\alpha_j} + x_j/\sqrt{N}$ for $x_j$ fixed, we have 
\begin{equation}
\det G^{(j)} = \det\Big([G^{(j)}]_{I_j\times I_j}\Big)\cdot \prod_{\stackrel{i = 1}{i\neq j}}^q\biggl(\rho_{\alpha_j} - \rho_{\alpha_j}\frac{\alpha_i}{\alpha_j}\biggl)^{r_i} + o_p(1). 
\end{equation}
Here recall $r_i = |I_i|$ and $[G^{(j)}]_{I_j\times I_j}$ is the sub-matrix of $G^{(j)}$ with rows and columns indexed by $I_j$. 
\end{lemma}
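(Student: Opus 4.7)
The plan is to apply the Leibniz expansion
\[
\det G^{(j)}=\sum_{\sigma\in S_r}\mathrm{sgn}(\sigma)\prod_{s=1}^r G^{(j)}_{s,\sigma(s)}
\]
and show that only permutations fixing every index outside $I_j$ survive in the limit. First I would collect the structural bounds on the entries of $G^{(j)}$: by (\ref{kappa}) combined with Corollary \ref{point}, for any $0<\kappa<1/2$ and $s\notin I_j$ one has $G^{(j)}_{ss}=\rho_{\alpha_j}-\rho_{\alpha_j}\alpha_i/\alpha_j+o_p(N^{-\kappa})$ when $s\in I_i$ with $i\neq j$, and $G^{(j)}_{st}=o_p(N^{-\kappa})$ for $s\neq t$. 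For rows $s\in I_j$, the extra $\sqrt{N}$ factor in the definition of $G^{(j)}$ combines with the $O_p(N^{-1/2})$ CLT bound coming from Theorem \ref{real clt} (proved in Section \ref{sec4}) and with (\ref{x}) to give $G^{(j)}_{st}=O_p(1)$ uniformly in $s,t\in I_j$.

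Next I would split the Leibniz sum according to the set $\mathcal{S}_j:=\{\sigma\in S_r:\sigma(s)=s\text{ for all }s\notin I_j\}$. A permutation in $\mathcal{S}_j$ is determined by its restriction to $I_j$, so the contribution of $\mathcal{S}_j$ factors as
\[
\sum_{\sigma\in\mathcal{S}_j}\mathrm{sgn}(\sigma)\prod_{s=1}^r G^{(j)}_{s,\sigma(s)}=\Bigl(\prod_{s\notin I_j}G^{(j)}_{s,s}\Bigr)\det\Bigl([G^{(j)}]_{I_j\times I_j}\Bigr).
\]
The diagonal bound above, together with the fact that $r-r_j$ is fixed, yields
\[
\prod_{s\notin I_j}G^{(j)}_{s,s}=\prod_{\stackrel{i=1}{i\neq j}}^q\Bigl(\rho_{\alpha_j}-\rho_{\alpha_j}\frac{\alpha_i}{\alpha_j}\Bigr)^{r_i}+o_p(1),
\]
while $\det([G^{(j)}]_{I_j\times I_j})=O_p(1)$ because $r_j$ is fixed and every entry of this submatrix is $O_p(1)$. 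Hence the $\mathcal{S}_j$-contribution already reproduces the claimed leading term plus an $o_p(1)$ error.

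For any $\sigma\notin\mathcal{S}_j$ there exists some $s\notin I_j$ with $\sigma(s)\neq s$, so the corresponding factor $G^{(j)}_{s,\sigma(s)}$ is $o_p(N^{-\kappa})$, while each of the other $r-1$ factors is at most $O_p(1)$. Therefore each such term of the Leibniz expansion is $o_p(1)$, and since $|S_r\setminus\mathcal{S}_j|\leq r!$ is a finite constant independent of $N$, the total contribution of permutations outside $\mathcal{S}_j$ is also $o_p(1)$. Adding the two pieces proves the lemma.

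The only subtle point I anticipate is confirming that the $o_p$ errors on entries outside $I_j$ do not conspire across the product to something larger than $o_p(1)$, and that $\det([G^{(j)}]_{I_j\times I_j})$ is genuinely $O_p(1)$. Both follow because $r$ and hence all the combinatorial counts appearing are fixed, so that only finitely-many-error-term arithmetic is involved; nevertheless the bookkeeping of how each $o_p(N^{-\kappa})$ multiplies against the (bounded-in-probability) factors has to be spelled out carefully to avoid sloppiness.
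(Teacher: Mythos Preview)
Your argument is correct and follows the same Leibniz-expansion route as the paper: both isolate the permutations fixing every index outside $I_j$ and show the remaining terms are $o_p(1)$, with the paper doing the bookkeeping on the unscaled matrix (carrying the $N^{r_j/2}$ factor and choosing $\kappa>r_j/[2(r_j+1)]$) while you work directly with $G^{(j)}$, which is slightly cleaner. The only cosmetic tightening needed is that your bound $G^{(j)}_{st}=O_p(1)$ for $s\in I_j$ should be stated for all $t\in\{1,\dots,r\}$, not just $t\in I_j$, since a permutation $\sigma\notin\mathcal{S}_j$ may send some $s\in I_j$ to $\sigma(s)\notin I_j$; the same CLT (Theorem~\ref{real clt}) applied to the full bilinear form delivers this without change.
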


\begin{proof}[Proof for Lemma \ref{det}]
By expanding the determinant we have 
\[
\det G^{(j)} = N^{r_i/2}\cdot \sum_{\sigma} \mathrm{sgn}(\sigma)\prod_{s=1}^{r}\Big(\lambda I-X_\xi^TA_N(\lambda)X_\xi\Big)_{s, \sigma(s)}
\]
As in \cite{8} we just need to prove, for any permutation $\sigma$ such that there exists some $s_0\notin I_j, \sigma(s_0)\neq s_0$, we always have
\begin{equation}\label{p0}
N^{r_i/2}\prod_{s=1}^{r}\Big(\lambda I-X_\xi^TA_N(\lambda)X_\xi\Big)_{s, \sigma(s)} \stackrel{p}{\to} 0. 
\end{equation} 
Indeed, for any $0<\kappa <1/2$, if $s\in I_j$ we must have, by (\ref{kappa}), 
\[
\Big(\lambda I-X_\xi^TA_N(\lambda)X_\xi\Big)_{s, \sigma(s)} = o_p(N^{-\kappa}). 
\]
If there further  exists some $s_0\notin I_j$ such that $s_0\neq \sigma(s_0)$, then by (\ref{kappa}) we have 
\[
\Big(\lambda I-X_\xi^TA_N(\lambda)X_\xi\Big)_{s_0, \sigma(s_0)} = o_p(N^{-\kappa}).
\]
Hence 
\[
N^{r_i/2}\prod_{s=1}^{r}\Big(\lambda I-X_\xi^TA_N(\lambda)X_\xi\Big)_{s, \sigma(s)} \leq O_p(N^{r_i/2 - r_i\kappa - \kappa}) = O_p(N^{r_i/2 - (r_i+1)\kappa}). 
\]
Thus we just need to choose $\kappa > r_i/[2(r_i+1)] $ to prove (\ref{p0}). This completes the proof of the lemma. 
\end{proof}

By Lemma \ref{det}, in order to get the asymptotic behavior of the $j$-th pack of the eigenvalues we just need to consider the $r_j$ roots of the equation
\[
\det \Big([G^{(j)}]_{I_j\times I_j}\Big) = 0.  
\]
By (\ref{x}) we have for fixed $x_j$,
\begin{equation}
G^{(j)}_{I_j\times I_j} = \Big(1+\gamma^{-2}\alpha_jm_3(\rho_{\alpha_j})\Big)x_jI- R^{(j)}
\end{equation}
where 
\begin{eqnarray}
R^{(j)} & = & \sqrt{N}\biggl( X_\xi^TA_N(\lambda)X_\xi - \frac{1}{N}\mathrm{tr}(A_N(\lambda))\widehat{\Sigma}\biggl)_{I_j\times I_j} \nonumber\\
& = & \sqrt{N}\biggl( (X_\xi[: , I_j])^TA_N(\lambda)X_\xi[: , I_j] - \frac{1}{N}\mathrm{tr}(A_N(\lambda))\alpha_j I\biggl).
\end{eqnarray}
Here $X_\xi[:, I_j]$ represents the sub matrix of $X_\xi$ consisting only the columns indexed by $I_j$. 

Later we will get a central limit theorem for $R^{(j)}$ for fixed $x_j$. That is, we will prove that $R^{(j)}$ will converge weakly to a Gaussian matrix. Hence intuitively speaking the limiting distribution of the $j$-th pack of the eigenvalues will be the same as the $r_j$ eigenvalues of certain Gaussian matrix $R^{(j)}/(1+\gamma^{-2}\alpha_jm_3(\rho_{\alpha_j}))$. More rigorous proofs will be provided in the next subsection.  

Further, in order to get the joint distribution of these $q$ packs of eigenvalues, it suffices to establish the joint distribution of these $q$ Gaussian matrices $R^{(j)}$ for $j = 1, \ldots, q$. Since they are Gaussian, we just need to characterize the covariance between different entries among these $G^{(j)}$'s. This will be done in the next subsection.

%%%%%%%%%%%%%%%%%%%%%%%%%%%%%%%%%%%%%%%%
%%%%%%%%%%%%%%%%%%%%%%%%%%%%%%%%%%%%%%%%
%%%
%%%  Subsection 2.3
%%%
%%%%%%%%%%%%%%%%%%%%%%%%%%%%%%%%%%%%%%%%
%%%%%%%%%%%%%%%%%%%%%%%%%%%%%%%%%%%%%%%%
\subsection{Central limit theorem for $\{G^{(j)}\}_{j=1}^q$ and finishing the proof}\label{sec2.3}

In this subsection we apply Theorem \ref{real clt} for $\{R^{(j)}\}_{j=1}^q$. Using the notation of Theorem \ref{real clt}, we have $K = \sum_{i=1}^q r_i(r_i+1)/2$. For $\ell = 1, 2, \ldots, K$, we have 
\[
A_N(\ell) = I + X_\eta\biggl(\biggl(\rho_{\alpha_j}+\frac{x_j}{\sqrt{N}}\biggl)I - X_\eta^TX_\eta\biggl)^{-1}X_\eta^T,
\qquad\forall \sum_{i=1}^{j-1}\frac{r_i(r_i+1)}{2}+1\leq\ell \leq\sum_{i=1}^{j}\frac{r_i(r_i+1)}{2}.
\]
By Lemma \ref{property1} --- \ref{property3} we know that the $A_N(\ell)$ satisfy the assumptions of Theorem \ref{real clt}, with $\omega_{\ell\ell'}$ and $\theta_{\ell\ell'}$ defined by below. For any $\ell, \ell '$ such that 
\[
\sum_{i=1}^{j-1}\frac{r_i(r_i+1)}{2}+1\leq\ell \leq\sum_{i=1}^{j}\frac{r_i(r_i+1)}{2}, \qquad \sum_{i=1}^{j'-1}\frac{r_i(r_i+1)}{2}+1\leq\ell' \leq\sum_{i=1}^{j'}\frac{r_i(r_i+1)}{2}
\]
we have
\begin{eqnarray}
\omega_{\ell\ell'} & = & \biggl[1+\frac{\gamma^{-2}[1+m_1(\rho_{\alpha_{j}})]}{\rho_{\alpha_{j}}-\gamma^{-2}[1+m_1(\rho_{\alpha_{j}})]}\biggl]\biggl[1+\frac{\gamma^{-2}[1+m_1(\rho_{\alpha_{j'}})]}{\rho_{\alpha_{j'}}-\gamma^{-2}[1+m_1(\rho_{\alpha_{j'}})]}\biggl] \nonumber\\
& = & \biggl(1+\frac{\gamma^{-2}}{\alpha_j-1}\biggl)\biggl(1+\frac{\gamma^{-2}}{\alpha_{j'}-1}\biggl). := \widetilde{\omega}_{jj'} \label{real1}\\
\theta_{\ell\ell'} & = & 1+\gamma^{-2}m_1(\rho_{\alpha_{j}}) + \gamma^{-2}m_1(\rho_{\alpha_{j'}}) + \gamma^{-2}m_2(\rho_{\alpha_{j}}, \rho_{\alpha_{j'}})  \nonumber \\
& = & \frac{(\alpha_{j}-1+\gamma^{-2})(\alpha_{j'}-1+\gamma^{-2})}{(\alpha_j-1)(\alpha_{j'}-1)-\gamma^{-2}} := \widetilde{\theta}_{jj'}\label{real2}
\end{eqnarray}
For notational convenience, we denote the right hand side of (\ref{real1}) and (\ref{real2}) to be $\widetilde{\omega}_{jj'}$ and $\widetilde{\theta}_{jj'}$, respectively.

Also we have 
\begin{eqnarray*}
&& \biggl\{X\biggl(\sum_{i=1}^{j-1}\frac{r_i(r_i+1)}{2}+\ell\biggl)\biggl\}_{\ell=1}^{r_j(r_j+1)/2} \\
& = & \Big\{\underbrace{\xi_{\sum_{i=1}^{j-1}r_i+1}, \ldots, \xi_{\sum_{i=1}^{j-1}r_i+1}}_{r_j}, \underbrace{\xi_{\sum_{i=1}^{j-1}r_i+2}, \ldots, \xi_{\sum_{i=1}^{j-1}r_i+2}}_{r_j-1}, \ldots, \underbrace{\xi_{\sum_{i=1}^{j-1}r_i+r_j}}_{1}\Big\}\\
&& \biggl\{Y\biggl(\sum_{i=1}^{j-1}\frac{r_i(r_i+1)}{2}+\ell\biggl)\biggl\}_{\ell=1}^{r_j(r_j+1)/2} \\ 
& = & \Big\{\underbrace{\xi_{\sum_{i=1}^{j-1}r_i+1}, \ldots, \xi_{\sum_{i=1}^{j-1}r_i+r_j}}_{r_j}, \underbrace{\xi_{\sum_{i=1}^{j-1}r_i+2}, \ldots, \xi_{\sum_{i=1}^{j-1}r_i+r_j}}_{r_j-1}, \ldots, \underbrace{\xi_{\sum_{i=1}^{j-1}r_i+r_j}}_{1}\Big\}
\end{eqnarray*}

Then by using our Theorem \ref{real clt}, for every fixed $x_j$, our $q$ matrices $\{R^{(j)}\}_{j=1}^q$ will converge to $q$ Gaussian matrices denoted by $\{G^{(j)}\}_{j=1}^q$, with intra-matrix-covariance 
\begin{eqnarray*}
\mathrm{Cov}(G^{(j)}_{st}, G^{(j)}_{uv}) & = & 
\widetilde{\omega}_{jj}\Big[\mathbb{E}[\xi_{\sum_{i=1}^{j-1}r_i + s, 1}\xi_{\sum_{i=1}^{j-1}r_i+u, 1}\xi_{\sum_{i=1}^{j-1}r_i + t, 1}\xi_{\sum_{i=1}^{j-1}r_i + v, 1}] - \alpha_j^2\mathbbm{1}_{s=t, u = v}\Big] \\
&&  + (\widetilde{\theta}_{jj}-\widetilde{\omega}_{jj})\Big[\alpha_j^2\mathbbm{1}_{s=v, u=t} +\alpha_j^2\mathbbm{1}_{s=u, t=v}\Big]
\end{eqnarray*}
for any $1\leq s<t\leq r_j, 1\leq u<v \leq r_j$. For the inter-matrix-covariances, we have
\begin{eqnarray*}
\mathrm{Cov}(G^{(j)}_{st}, G^{(j')}_{uv}) & = & 
\widetilde{\omega}_{jj'}\Big[\mathbb{E}[\xi_{\sum_{i=1}^{j-1}r_i + s, 1}\xi_{\sum_{i=1}^{j'-1}r_i+u, 1}\xi_{\sum_{i=1}^{j-1}r_i + t, 1}\xi_{\sum_{i=1}^{j'-1}r_i + v, 1}] - \alpha_j\alpha_{j'}\mathbbm{1}_{s=t, u = v}\Big]
\end{eqnarray*}
for any $1\leq s, t\leq r_j$, $1\leq u, v\leq r_{j'}$ and for any $1\leq j\neq j'\leq q$. 

We just proved that for fixed $\{x_j\}_{j=1}^q$, $\{R^{(j)}\}_{j=1}^q$ will jointly converge to Gaussian matrices $\{G^{(j)}\}_{j=1}^q$ weakly. Recall that our matrix $R^{(j)} = R^{(j)}(x_j)$ can be regarded as a stochastic process on $x_j\in\mathbb{R}$. Our next lemma study the convergence of $\{R^{(j)}\}_{j=1}^q$ as a process.  

\begin{lemma}\label{process}
The stochastic process $\{R^{(j)}(x_j)\}_{j=1}^q$ defined on $(x_j)_{j=1}^q\in\mathbb{R}^{q}$ converge to $\{G^{(j)}\}_{j=1}^q$ weakly in the sense o finite dimensional distribution. 
\end{lemma}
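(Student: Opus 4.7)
The plan is to reduce finite-dimensional convergence to a single invocation of Theorem \ref{real clt} applied to an enlarged index set of matrices $A_N(\ell)$, exactly in the spirit of the proof just given for fixed $(x_j)_{j=1}^q$. Concretely, fix any finite grid $\{x_j^{(1)}, \ldots, x_j^{(m_j)}\} \subset \mathbb{R}$ for each $1\le j\le q$, and form the matrices
\[
A_N^{(j,k)} \;=\; I + X_\eta\Bigl(\bigl(\rho_{\alpha_j} + x_j^{(k)}/\sqrt{N}\bigr)I - X_\eta^T X_\eta\Bigr)^{-1} X_\eta^T,
\qquad 1\le j\le q,\ 1\le k\le m_j.
\]
Each such matrix generates $r_j(r_j+1)/2$ bilinear forms in the entries of $X_\xi[:,I_j]$, yielding all the entries of $R^{(j)}(x_j^{(k)})$. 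Relabel these matrices as a single list $A_N(1), \ldots, A_N(M)$ with $M = \sum_{j,k} r_j(r_j+1)/2$, and apply Theorem \ref{real clt} once to obtain joint Gaussian convergence of the whole collection $\{R^{(j)}(x_j^{(k)})\}_{j,k}$.

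The essential observation that makes this work is that the perturbation $x_j^{(k)}/\sqrt{N} \to 0$ as $N\to\infty$, so every limit appearing in Lemmas \ref{property1}, \ref{property2}, \ref{property3} depends on $\lambda = \rho_{\alpha_j} + x_j^{(k)}/\sqrt{N}$ only through $\rho_{\alpha_j}$. In particular, the proofs of Lemma \ref{property1} and Lemma \ref{property2} already accommodate two different values $\lambda, \lambda'$, and the resulting limits $\widetilde{\omega}_{jj'}, \widetilde{\theta}_{jj'}$ of \eqref{real1}--\eqref{real2} depend only on $(\alpha_j,\alpha_{j'})$, not on the specific $(x_j^{(k)}, x_{j'}^{(k')})$. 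Hence the covariance structure Theorem \ref{real clt} produces between $G^{(j)}(x_j^{(k)})$ and $G^{(j)}(x_j^{(k')})$ at the same $\alpha_j$ but different $k\neq k'$ is identical to the covariance of a single $G^{(j)}$ with itself. Consequently the limiting joint law is supported on the diagonal $\{G^{(j)}(x_j^{(k)}) = G^{(j)}(x_j^{(k')}) \text{ a.s.}\}$, and the process collapses to a single matrix $G^{(j)}$, independent of $x_j$.

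The implementation then amounts to verifying the hypotheses of Theorem \ref{real clt} for the enlarged list. The diagonal-sum limit in Lemma \ref{property1} for two different $\lambda, \lambda'$ with the \emph{same} $\alpha_j$ gives $\widetilde{\omega}_{jj}$; the Frobenius-type limit in Lemma \ref{property2} gives $\widetilde{\theta}_{jj}$; and the uniform $L^\infty$-bound in Lemma \ref{property3} holds simultaneously for all $A_N^{(j,k)}$ because the bound is derived from control of $\|B_N(\lambda)\|_2$, which is uniform over any fixed compact set of $\lambda$ values bounded away from the Mar\v{c}enko--Pastur sea. Once the hypotheses are verified, Theorem \ref{real clt} directly yields the joint Gaussian limit for the bilinear forms indexed by $(j,k)$, which is exactly the finite-dimensional distributional convergence claimed.

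The mild obstacle is bookkeeping: one must check that Lemma \ref{property3}'s exponential bound survives a \emph{single} choice of $(M,c)$ valid across the finite grid $\{x_j^{(k)}\}$, and that the uniform-integrability step in the proof of Lemma \ref{property1} extends to products $a^{(N)}_{ss}(\lambda_1)\cdots a^{(N)}_{ss}(\lambda_L)$ of finitely many factors. Both are routine — the first because the map $\lambda \mapsto \|B_N(\lambda)\|_2$ is uniformly continuous on any compact interval disjoint from $[\lambda_-,\lambda_+]$, and the second because $\sup_N \mathbb{E}(a^{(N)}_{11}(\lambda))^{4L}<\infty$ by the same Cauchy-type estimate used in Lemma~6.1 of \cite{4}. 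With these routine upgrades in place, the proof is complete.
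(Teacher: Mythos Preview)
Your proposal is correct and follows essentially the same approach as the paper: enlarge the index set to cover a finite grid of $x_j$-values and apply Theorem~\ref{real clt} once, noting that the limits $\widetilde{\omega}_{jj'},\widetilde{\theta}_{jj'}$ depend only on $\rho_{\alpha_j}$ and not on the particular $x_j^{(k)}$, so the limiting Gaussian process is constant in $x_j$. Your write-up is in fact more detailed than the paper's own proof, which merely states that the argument is ``very similar to what we have done before'' and invokes Theorem~\ref{real clt} on the enlarged collection without spelling out the degeneracy-to-the-diagonal step.
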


\begin{proof}[Proof of Lemma \ref{process}]
We just need to prove that the finite dimensional distribution of the process $\{G^{(j)}(x_j)\}_{j=1}^q$ will converge weakly to that of the process $\{G^{(j)}\}_{j=1}^q$ (this is a process constant in $x_j$). That is, we need to prove that for any positive integer $k$ and any $\{x_{j1}\}_{j=1}^q, \ldots, \{x_{jk}\}_{j=1}^q\in\mathbb{R}^q$, the distribution of 
\[
R^{(1)}(x_{11}), \ldots, R^{(1)}(x_{1k}), R^{(2)}(x_{21}), \ldots, R^{(2)}(x_{2k}), \ldots, R^{(q)}(x_{q1}), \ldots, R^{(q)}(x_{qk})
\]
will converge weakly to the distribution of 
\[
G^{(1)}, \ldots, G^{(1)}, G^{(2)}, \ldots, G^{(2)}, \ldots, G^{(q)}, \ldots, G^{(q)}.
\]
The proof is very similar to what we have done before --- just to use Theorem \ref{real clt} for all these $qk$ matrices $\{R^{(j)}(x_{ji})\}_{1\leq j\leq q, 1\leq i\leq k}$. 
\end{proof}

Now putting all the parts together, we can finally finish the proof of Theorem \ref{main theorem real}.

\begin{proof}[Proof of Theorem \ref{main theorem real}]
Now recall that $\{\widehat{\lambda}^{(\ell)}\}_{\ell=1}^{r}$ are our $r$ extreme sample eigenvalues. Denote 
\[
\widehat{\lambda}^{(\ell)} = \rho_{\alpha_j} + \frac{\widehat{x}_\ell}{\sqrt{N}} \qquad \text{ if } \sum_{i=1}^{j-1}r_i + 1\leq \ell\leq\sum_{i=1}^{j}r_i. 
\]
Now these $\widehat{x}_\ell$'s are random, being no longer fixed. Now let 
\[
y_{j, 1} < z_{j, 1} < y_{j, 2} < z_{j, 2} < \ldots < y_{j, r_j} < z_{j, r_j}, \qquad 1\leq j\leq q
\]
be $2\sum_{i=1}^qr_i = 2r$ fixed constants. For notational convenience, define
\[
\phi_{j, \ell} = \rho_{\alpha_j} + \frac{y_{j, \ell}}{\sqrt{N}},\quad \psi_{j, \ell} = \rho_{\alpha_j} + \frac{z_{j, \ell}}{\sqrt{N}}, \quad, \ell = 1, \ldots, r_j, \quad, j = 1, \ldots, q. 
\]

Then 
\begin{eqnarray*}
&& \mathbb{P}\Big[y_{j, \ell} < \widehat{x}_{\sum_{i=1}^{j-1}r_i+\ell} < z_{j, \ell}, \quad \forall \ell = 1, \ldots, r_j, \quad\forall j = 1, \ldots, q\Big] \\
& = & \mathbb{P}\Big[\det\Big(\phi_{j, \ell}-X_\xi^TA_N(\phi_{j, \ell})X_\xi\Big)\det\Big(\psi_{j, \ell}-X_\xi^TA_N(\psi_{j, \ell})X_\xi\Big)<0,  \\
&&\qquad\qquad\qquad\qquad\qquad\qquad\qquad \forall \ell = 1, \ldots, r_j, \quad\forall j = 1, \ldots, q\Big] \\
& \to & \mathbb{P}\Big[\det\Big(y_{j, \ell}I-\frac{G^{(j)}}{1+\gamma^{-2}\alpha_jm_3(\rho_{\alpha_j})}\Big)\det\Big(z_{j, \ell}I-\frac{G^{(j)}}{1+\gamma^{-2}\alpha_jm_3(\rho_{\alpha_j})}\Big)<0,  \\
&&\qquad\qquad\qquad\qquad\qquad\qquad\qquad \forall \ell = 1, \ldots, r_j, \quad\forall j = 1, \ldots, q\Big]
\end{eqnarray*}
This finishes the proof as the last expression is exactly the probability that the $r_j$ eigenvalues of $(1+\gamma^{-2}\alpha_jm_3(\alpha_j))\cdot G^{(j)}$ are between $y_{j, \ell}$ and $z_{j, \ell}$, respectively for $\ell = 1, \ldots, r_j$ and $j = 1, \ldots, q$. 
\end{proof}

% ell: x: s y: t ell': x: u, y: v

%%%%%%%%%%%%%%%%%%%%%%%%%%%%%%%%%%%%%%%%
%%%%%%%%%%%%%%%%%%%%%%%%%%%%%%%%%%%%%%%%
%%%%%%%%%%%%%%%%%%%%%%%%%%%%%%%%%%%%%%%%
%%%
%%%  Section 3
%%%
%%%%%%%%%%%%%%%%%%%%%%%%%%%%%%%%%%%%%%%%
%%%%%%%%%%%%%%%%%%%%%%%%%%%%%%%%%%%%%%%%
%%%%%%%%%%%%%%%%%%%%%%%%%%%%%%%%%%%%%%%%
\section{Asymptotic result for eigenvectors}\label{sec3}
As we have said in Section \ref{sec1}, in this section we only consider the case where all the true eigenvalues are district. That is, $q=r$ and $\widehat{\Sigma} = \mathrm{diag}\{\alpha_1, \ldots, \alpha_r\}$. 

We denote $(\widehat{u}^T, \widehat{v}^T)^T$ as the eigenvector of the sample covariance matrix, where $\widehat{u}\in\mathbb{R}^{r}, \widehat{v}\in\mathbb{R}^{p-r}$. Then we have
\begin{equation}\label{3.1}
\left(
\begin{array}{cc}
\widehat{\lambda}I- X_\xi^TX_\xi & -X_\xi^TX_\eta \\
-X_\eta^TX_\xi & \widehat{\lambda}I - X_\eta^TX_\eta
\end{array}
\right)
\left(
\begin{array}{c}
\widehat{u}\\
\widehat{v}
\end{array}
\right) = 0. 
\end{equation}
Here $\widehat{\lambda}$ is the corresponding sample eigenvalue. Just the same as in Section \ref{sec2}, since we are only interested in the isolated eigenvalues, we can assume that $\widehat{\lambda}I-X_\eta^TX_\eta$ is non-singular. Then from (\ref{3.1}) we can get
\begin{eqnarray}
\widehat{\lambda}\widehat{u} & = & X_\xi^TA_N(\widehat{\lambda})X_\xi\widehat{u} \label{u}\\
\widehat{v} & = & (\widehat{\lambda}I-X_\eta^TX_\eta)^{-1}X_\eta^TX_\xi\widehat{u}. \label{v}
\end{eqnarray}
In the following few subsections, we will analyze the the behavior of the eigenvector of the $j$-th eigenvalue, i.e., when $\widehat{\lambda}\approx\rho_{\alpha_j}$. We denote such eigenvectors by $(\widehat{u}^{(j)^T}, \widehat{v}^{(j)^T})^T$ and the corresponding eigenvalue by $\widehat{\lambda}^{(j)}$. Since the eigenvectors are unique up to scaling, we require that $\|\widehat{u}^{(j)}\|_2 = 1$ and $\widehat{u}^{(j)}_j \geq 0$. Here $\widehat{u}^{(j)}_j$ is the $j$-th entry of $\widehat{u}^{(j)}$. Also, for notational convenience, we denote $\widehat{u}^{(j)}_{-j}$ as the $(r-1)$-dimensional vector obtained by deleting the $j$-th entry of $\widehat{u}^{(j)}$.

Recall that in Section \ref{sec2} we proved that $\sqrt{N}\cdot(X_\xi^TA_N(\lambda)X_\xi-\mathrm{tr}(A_N(\lambda))/N)$ will converge weakly to a Gaussian matrix, for $\lambda = \rho_{\alpha_j} + x_j/\sqrt{N}$ with fixed $x_j$. In this section, however, we have to deal with $A_N(\widehat{\lambda}^{(j)})$ where $\widehat{x}_{j}$ is a random variable being bounded in probability. Thus we need to have a more generalized result, stated in following lemma.

\begin{lemma}\label{randomA}
For $A_N(\widehat{\lambda}^{(j)})$ defined above we still have 
\[
\sqrt{N}\cdot\biggl(X_\xi^TA_N(\widehat{\lambda}^{(j)})X_\xi-\frac{1}{N}\mathrm{tr}(A_N(\widehat{\lambda}^{(j)}))\biggl) \to G^{(j)}. 
\]
Here $G^{(j)}$ is defined in Theorem \ref{main theorem real}.
\end{lemma}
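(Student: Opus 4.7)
The plan is to leverage the CLT for deterministic $\lambda$ established in Section \ref{sec2} and replace $\widehat{\lambda}^{(j)}$ by $\rho_{\alpha_j}$ up to a negligible perturbation. By Theorem \ref{main theorem real} we may write $\widehat{\lambda}^{(j)} = \rho_{\alpha_j} + \widehat{x}_j/\sqrt{N}$ with $\widehat{x}_j = O_p(1)$. Setting
\[
R^{(j)}(x) := \sqrt{N}\Big(X_\xi^T A_N\bigl(\rho_{\alpha_j}+x/\sqrt{N}\bigr) X_\xi - \tfrac{1}{N}\mathrm{tr}\bigl(A_N(\rho_{\alpha_j}+x/\sqrt{N})\bigr)\widehat{\Sigma}\Big),
\]
the work of Subsection \ref{sec2.3} already gives $R^{(j)}(0)\stackrel{D}{\to}G^{(j)}$. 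By Slutsky, it is therefore enough to establish
\[
R^{(j)}(\widehat{x}_j) - R^{(j)}(0) = o_p(1).
\]

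First I would apply the resolvent identity to $A_N(\lambda) = I + X_\eta(\lambda I - X_\eta^T X_\eta)^{-1} X_\eta^T$, giving
\[
A_N(\widehat{\lambda}^{(j)}) - A_N(\rho_{\alpha_j}) = -\frac{\widehat{x}_j}{\sqrt{N}}\,M,\qquad
M := X_\eta(\widehat{\lambda}^{(j)}I - X_\eta^T X_\eta)^{-1}(\rho_{\alpha_j}I - X_\eta^T X_\eta)^{-1}X_\eta^T.
\]
Since $\rho_{\alpha_j}$ sits strictly outside the Mar\v{c}enko--Pastur sea and $\widehat{\lambda}^{(j)}$ is within $O_p(N^{-1/2})$ of it, the same extreme-eigenvalue estimates used in Lemma \ref{property3} show $\|M\|_{\mathrm{op}}=O_p(1)$. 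Substituting into the difference and using that $\sqrt{N}\cdot(\widehat{x}_j/\sqrt{N})=\widehat{x}_j$ yields
\[
R^{(j)}(\widehat{x}_j) - R^{(j)}(0) = -\,\widehat{x}_j\Big(X_\xi^T M X_\xi - \tfrac{1}{N}\mathrm{tr}(M)\widehat{\Sigma}\Big),
\]
so the task reduces to showing the centered bilinear form in parentheses is $o_p(1)$.

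The main obstacle is that $M$ is \emph{not} independent of $X_\xi$, because $\widehat{\lambda}^{(j)}$ is determined jointly by $X_\xi$ and $X_\eta$, so Theorem \ref{real clt} cannot be invoked directly. To decouple, I replace $M$ by its deterministic-frequency surrogate
\[
M_0 := X_\eta(\rho_{\alpha_j}I - X_\eta^T X_\eta)^{-2} X_\eta^T,
\]
which depends only on $X_\eta$. A second use of the resolvent identity gives $\|M - M_0\|_{\mathrm{op}} = O_p(|\widehat{\lambda}^{(j)}-\rho_{\alpha_j}|) = O_p(N^{-1/2})$, so both
\[
\|X_\xi^T(M-M_0)X_\xi\|_{\mathrm{op}} \leq \|X_\xi\|_{\mathrm{op}}^2\|M-M_0\|_{\mathrm{op}} \qquad\text{and}\qquad \tfrac{1}{N}|\mathrm{tr}(M-M_0)|
\]
are $O_p(N^{-1/2})$. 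Meanwhile $M_0$ has exactly the rational structure in $X_\eta^T X_\eta$ required by Theorem \ref{real clt}; the analogues of Lemmas \ref{property1}--\ref{property3} are verified by the same contour / moment arguments with $(\lambda I - X_\eta^T X_\eta)^{-1}$ replaced by $(\rho_{\alpha_j}I - X_\eta^T X_\eta)^{-2}$, so Theorem \ref{real clt} (or just the second-moment bound extracted from its proof) gives $X_\xi^T M_0 X_\xi - \mathrm{tr}(M_0)\widehat{\Sigma}/N = O_p(N^{-1/2})$. Combining the two pieces yields $X_\xi^T M X_\xi - \mathrm{tr}(M)\widehat{\Sigma}/N = O_p(N^{-1/2})$, hence $R^{(j)}(\widehat{x}_j)-R^{(j)}(0) = O_p(N^{-1/2})=o_p(1)$, completing the proof.

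The decoupling step is the delicate point. A crude bound $\sqrt{N}\|X_\xi^T(A_N(\widehat{\lambda}^{(j)})-A_N(\rho_{\alpha_j}))X_\xi\|_{\mathrm{op}} \leq \sqrt{N}\|X_\xi\|_{\mathrm{op}}^2\|A_N(\widehat{\lambda}^{(j)})-A_N(\rho_{\alpha_j})\|_{\mathrm{op}}$ is only $O_p(1)$, so the required improvement from $O_p(1)$ to $o_p(1)$ must come from the cancellation between the bilinear form and its trace centering; and that cancellation can only be exploited through a CLT whose hypotheses demand independence between the quadratic kernel and $X_\xi$.
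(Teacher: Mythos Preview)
Your proof is correct and follows essentially the same route as the paper's. Both arguments apply the resolvent identity once to pull out the $\widehat{x}_j/\sqrt{N}$ prefactor, then a second time to replace the random-frequency kernel $M$ by the deterministic-frequency surrogate $M_0 = X_\eta(\rho_{\alpha_j}I-X_\eta^TX_\eta)^{-2}X_\eta^T$; the CLT (equivalently Corollary~\ref{point}) handles the centered bilinear form with kernel $M_0$, while the residual $M-M_0$ is disposed of by an operator-norm bound of order $O_p(N^{-1/2})$. Your explicit discussion of why the decoupling step is needed---$M$ depends on $\widehat{\lambda}^{(j)}$ and hence on $X_\xi$, blocking a direct appeal to Theorem~\ref{real clt}---is a point the paper leaves implicit.
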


\begin{proof}[Proof of Lemma \ref{randomA}]
We have 
\[
X_\xi^TA_N(\widehat{\lambda}^{(j)})X_\xi-\frac{1}{N}\mathrm{tr}(A_N(\widehat{\lambda}^{(j)})) 
= X_\xi^TA_N(\rho_{\alpha_j})X_\xi-\frac{1}{N}\mathrm{tr}(A_N(\rho_{\alpha_j}))\widehat{\Sigma} + R_N
\]
Here the first term is asymptotically $G^{(j)}/\sqrt{N}$ by the previous proof. Hence we just need to show that the residual term $R_N = o_P(N^{-1/2})$.   Now $R_N = X_\xi^TD_N(\widehat{\lambda}^{(j)})X_\xi-\frac{1}{N}\mathrm{tr}(D_N(\widehat{\lambda}^{(j)}))\widehat{\Sigma}$ where 
\begin{eqnarray*}
D_N(\widehat{\lambda}^{(j)}) & = & A_N(\widehat{\lambda}^{(j)}) - A_N(\rho_{\alpha_j}) \\
& = & \frac{\widehat{x}_j}{\sqrt{N}}\cdot\biggl(I + X_\eta(\widehat{\lambda}^{(j)}I-X_\eta^TX_\eta)^{-1}(\rho_{\alpha_j}I-X_\eta^TX_\eta)^{-1}X_\eta^T\biggl) \\
& = & \underbrace{\frac{\widehat{x}_j}{\sqrt{N}}\cdot\biggl(I + X_\eta(\rho_{\alpha_j}I-X_\eta^TX_\eta)^{-2}X_\eta^T\biggl)}_{\text{part I}} \\
&& \qquad - \underbrace{\frac{\widehat{x}_j^2}{N}\cdot X_\eta(\widehat{\lambda}^{(j)}I-X_\eta^TX_\eta)^{-1}(\rho_{\alpha_j}I-X_\eta^TX_\eta)^{-2}X_\eta^T}_{\text{part II}}
\end{eqnarray*}
For the first part, using Corollary \ref{point} we can show 
\[
\frac{\widehat{x}_j}{\sqrt{N}}\cdot X_\xi^T\biggl(I + X_\eta(\rho_{\alpha_j}I-X_\eta^TX_\eta)^{-2}X_\eta^T\biggl)X_\xi - \frac{1}{N}\mathrm{tr}(D_N(\widehat{\lambda}^{(j)}))\widehat{\Sigma} = o_p(N^{-1/2}). 
\]
For the second part, the norm $\|X_\eta(\widehat{\lambda}^{(j)}I-X_\eta^TX_\eta)^{-1}(\rho_{\alpha_j}I-X_\eta^TX_\eta)^{-2}X_\eta^T\|_2 = O_p(1)$ is bounded in probability. Hence the second part is of order $O_p(N^{-1})$. Putting these things together, we proved that $R_N(\widehat{\lambda}^{(j)}) = o_p(N^{-1/2})$, which completes the proof of the lemma. 
\end{proof}

%%%%%%%%%%%%%%%%%%%%%%%%%%%%%%%%%%%%%%%%
%%%%%%%%%%%%%%%%%%%%%%%%%%%%%%%%%%%%%%%%
%%%
%%%  Subsection 3.1
%%%
%%%%%%%%%%%%%%%%%%%%%%%%%%%%%%%%%%%%%%%%
%%%%%%%%%%%%%%%%%%%%%%%%%%%%%%%%%%%%%%%%
\subsection{Proof of Theorem \ref{thm2}}\label{sec3.1}
Intuitively, $\widehat{u}^{(j)}$ should be close to $e_j$. Here $e_j$ is the vector of all zeros except the $j$-th entry being one. Hence $\widehat{u}^{(j)}_{j}\approx1$ and $\widehat{u}^{(j)}_{-j}\approx0$. The following lemma establishes out intuition.  

%%%%%%%%%%%%%%%%%%%%%%%%%%%%%%%%%%%%%%%%
%
%  Error bound lemma
%
%%%%%%%%%%%%%%%%%%%%%%%%%%%%%%%%%%%%%%%%
\begin{lemma}\label{error bound}
We have 
\begin{eqnarray}
\widehat{u}^{(j)}_{j} & = & O_p(N^{-1}) \label{1c}\\
\widehat{u}^{(j)}_{-j} & = & O_p(N^{-1/2}). \label{2c}
\end{eqnarray}
\end{lemma}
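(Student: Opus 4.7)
The plan is to read off component-wise bounds for $\widehat{u}^{(j)}$ from the eigenvector identity (\ref{u}), once it is combined with the expansion of Lemma \ref{randomA}. The argument proceeds in three stages: (i) qualitative consistency $\widehat{u}^{(j)}\to e_j$ in probability; (ii) an $O_p(N^{-1/2})$ bound on each off-diagonal coordinate; (iii) the refined bound on the $j$-th coordinate from normalization.

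First I would establish the consistency. Lemma \ref{randomA} together with Theorem \ref{main theorem real} (which gives $\widehat{\lambda}^{(j)}\stackrel{p}{\to}\rho_{\alpha_j}$) implies that the random $r\times r$ matrix $X_\xi^T A_N(\widehat{\lambda}^{(j)})X_\xi$ converges in probability to the diagonal matrix $(1+\gamma^{-2}m_1(\rho_{\alpha_j}))\widehat{\Sigma}$, whose spectrum consists of the $r$ distinct numbers $(1+\gamma^{-2}m_1(\rho_{\alpha_j}))\alpha_i=\rho_{\alpha_j}\alpha_i/\alpha_j$; in particular the eigenvalue $\rho_{\alpha_j}$ is simple and attained at $e_j$. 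Since simple eigenvectors depend continuously on the matrix, and our sign convention $\widehat{u}^{(j)}_j\geq 0$ removes the sign ambiguity, this yields $\widehat{u}^{(j)}\stackrel{p}{\to} e_j$.

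Next, I would project (\ref{u}) onto each coordinate $i\neq j$. Substituting the Lemma \ref{randomA} expansion $X_\xi^TA_N(\widehat{\lambda}^{(j)})X_\xi=\frac{1}{N}\mathrm{tr}(A_N(\widehat{\lambda}^{(j)}))\widehat{\Sigma}+\frac{1}{\sqrt{N}}G^{(j)}+o_p(N^{-1/2})$ gives
\[
\Big(\widehat{\lambda}^{(j)}-\tfrac{1}{N}\mathrm{tr}(A_N(\widehat{\lambda}^{(j)}))\alpha_i\Big)\widehat{u}^{(j)}_i=\tfrac{1}{\sqrt N}\,(G^{(j)}\widehat{u}^{(j)})_i+o_p(N^{-1/2}).
\]
The coefficient on the left converges in probability to $(1+\gamma^{-2}m_1(\rho_{\alpha_j}))(\alpha_j-\alpha_i)\neq 0$, while the right side is $O_p(N^{-1/2})$ because $G^{(j)}$ is asymptotically a fixed-size Gaussian matrix and $\|\widehat{u}^{(j)}\|_2=1$. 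Dividing yields $\widehat{u}^{(j)}_i=O_p(N^{-1/2})$ for every $i\neq j$, which is (\ref{2c}).

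For (\ref{1c}), the normalization $\|\widehat{u}^{(j)}\|_2=1$ gives $(\widehat{u}^{(j)}_j)^2=1-\|\widehat{u}^{(j)}_{-j}\|_2^2=1-O_p(N^{-1})$; combined with $\widehat{u}^{(j)}_j\geq 0$ and $\widehat{u}^{(j)}_j\stackrel{p}{\to}1$, the identity $1-\widehat{u}^{(j)}_j=(1-(\widehat{u}^{(j)}_j)^2)/(1+\widehat{u}^{(j)}_j)$ gives the desired $\widehat{u}^{(j)}_j=1+O_p(N^{-1})$ (which is how (\ref{1c}) should be read, consistent with Theorem \ref{thm2}). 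The main obstacle is really the first step: justifying that one may insert the \emph{random} eigenvalue $\widehat{\lambda}^{(j)}$ into $A_N(\cdot)$ and still obtain convergence in probability of the full $r\times r$ matrix. Once Lemma \ref{randomA} (which does exactly this) is in hand, the rest is a routine continuity-plus-normalization computation.
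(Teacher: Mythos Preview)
Your proposal is correct and follows essentially the same route as the paper: insert the expansion $X_\xi^TA_N(\widehat{\lambda}^{(j)})X_\xi=\frac{1}{N}\mathrm{tr}(A_N(\widehat{\lambda}^{(j)}))\widehat{\Sigma}+N^{-1/2}R_N^{(j)}$ with $R_N^{(j)}=O_p(1)$ into the eigenvector equation (\ref{u}), read off $\widehat{u}^{(j)}_{-j}=O_p(N^{-1/2})$ from the invertibility of the limiting diagonal block $\rho_{\alpha_j}(I-\alpha_j^{-1}\widehat{\Sigma}_{-j,-j})$, and then deduce $\widehat{u}^{(j)}_j=1+O_p(N^{-1})$ from the unit-norm constraint. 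The only difference is your preliminary step (i), the qualitative consistency $\widehat{u}^{(j)}\to e_j$ via continuity of simple eigenvectors; the paper skips this and uses only $\|\widehat{u}^{(j)}\|_2=1$ to bound the right-hand side, which already suffices (and your own step (ii) also only uses $\|\widehat{u}^{(j)}\|_2=1$), so step (i) is harmless but not needed.
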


%%%%%%%%%%%%%%%%%%%%%%%%%%%%%%%%%%%%%%%%
%
%  Proof of Error bound lemma
%
%%%%%%%%%%%%%%%%%%%%%%%%%%%%%%%%%%%%%%%%
\begin{proof}[proof of Lemma \ref{error bound}]
As is shown in Section \ref{sec2}, we have 
\begin{equation}\label{xax}
X_\xi^TA_N(\widehat{\lambda}^{(j)})X_\xi = \frac{1}{N}\mathrm{tr}\Big(A_N(\widehat{\lambda}^{(j)})\Big)\widehat{\Sigma} + \frac{1}{\sqrt{N}}R_N^{(j)}.
\end{equation}
Here 
\begin{equation}
R_N^{(j)} := \sqrt{N}\biggl\{X_\xi^TA_N(\widehat{\lambda}^{(j)})X_\xi - \frac{1}{N}\mathrm{tr}\Big(A_N(\widehat{\lambda}^{(j)})\Big)\widehat{\Sigma}\biggl\}\label{RN}
\end{equation}
will, by Lemma \ref{randomA}, $R_N^{(j)}$ will converge in distribution to a Gaussian matrix. Hence $R_N^{(j)} = O_p(1)$. Moreover by the previous section we also have 
\begin{equation}\label{lambdax}
\widehat{\lambda}^{(j)} = \rho_{\alpha_j} + \frac{1}{\sqrt{N}}\widehat{x}_j
\end{equation}
for some $\widehat{x}_j = O_p(1)$. Substituting (\ref{xax}) and (\ref{lambdax}) in (\ref{u}) gives
\[
\rho_{\alpha_j}\widehat{u}^{(j)} + \frac{\widehat{x}_j}{\sqrt{N}}\widehat{u}^{(j)} = \frac{1}{N}\mathrm{tr}\Big(A_N(\widehat{\lambda}^{(j)})\Big)\widehat{\Sigma}\widehat{u}^{(j)} + \frac{1}{\sqrt{N}}R_N^{(j)}\widehat{u}^{(j)}. 
\]
Since 
\[
\frac{1}{N}\mathrm{tr}(A_N(\widehat{\lambda}^{(j)})) = \frac{\rho_{\alpha_j}}{\alpha_j} - \frac{1}{\sqrt{N}}\gamma^{-2}m_3(\rho_{\alpha_j})\widehat{x}_j + o_p(N^{-1/2})
\]
we obtain 
\begin{equation}\label{better}
\rho_{\alpha_j}\widehat{u}^{(j)} + \frac{\widehat{x}_j}{\sqrt{N}}\widehat{u}^{(j)} = \frac{\rho_{\alpha_j}}{\alpha_j}\widehat{\Sigma}\widehat{u}^{(j)} - \frac{1}{\sqrt{N}}\gamma^{-2}m_3(\rho_{\alpha_j})\widehat{x}_j\widehat{\Sigma}\widehat{u}^{(j)} + \frac{1}{\sqrt{N}}R_N^{(j)}\widehat{u}^{(j)} + o_p(N^{-1/2}). 
\end{equation}
In (\ref{better}), compare all the entries except the $j$-th, we get 
\begin{equation}\label{z}
\rho_{\alpha_j}\biggl(I - \frac{1}{\alpha_j}\widehat{\Sigma}_{-j, -j}\biggl)\widehat{u}^{(j)}_{-j} = O_p(N^{-1/2}).
\end{equation}
Here $\widehat{\Sigma}_{-j, -j}$ is the $(j-1)\times(j-1)$ sub-matrix of $\widehat{\Sigma}$ after deleting its $j$-th row and $j$-th column.  All the rest of the terms in (\ref{better}) can be written as $O_p(N^{-1/2})$ because $\widehat{u}^{(j)} = O_p(1)$ as it has unit norm.  In (\ref{z}) since the matrix on the left hand side is non-singular we must have $\widehat{u}^{(j)}_{-j} = O_p(N^{-1/2})$, proving our first claim (\ref{1c}).

For the second claim, recall $\|u^{(j)}\|_2 = 1$, we have  
\[
|u^{(j)}_j| = \sqrt{1-\|u^{(j)}_{-j}\|_2^2} = \sqrt{1-O_p(N^{-1})} = 1 + O_p(N^{-1}). 
\]
Noting that $u^{(j)}_{j}$ is positive, we proved (\ref{2c}). 
\end{proof}

By Lemma \ref{error bound}, we can write $u^{(j)}$ as 
\begin{equation}\label{udu}
u^{(j)} = e_j + \frac{1}{\sqrt{N}}\delta u^{(j)}
\end{equation}
where $\delta u^{(j)}_{j} = o_p(1)$ and $\delta u^{(j)}_{-j} = O_p({1})$. Substituting (\ref{udu}) in (\ref{better}) we can obtain 
\begin{equation}\label{f}
\biggl(\rho_{\alpha_j} - \frac{\rho_{\alpha_j}}{\alpha_j}\widehat{\Sigma}\biggl)\delta u^{(j)} = -\biggl(\widehat{x}_jI + \gamma^{-2}m_3(\rho_{\alpha_j})\widehat{x}_j\widehat{\Sigma}\biggl)e_j + R_N^{(j)}e_j + o_p(1).
\end{equation}
If we consider all the entries of (\ref{f}) except the $j$-th one, we can obtain
\begin{equation}\label{ff}
\biggl(\rho_{\alpha_j} - \frac{\rho_{\alpha_j}}{\alpha_j}\widehat{\Sigma}_{-j, -j}\biggl)\delta u^{(j)}_{-j} = R_N^{(j)}e_j + o_p(1)
\end{equation}

For every $j$, we can get an equation of $\delta u^{(j)}_{-j}$ as in (\ref{ff}). Now, using Theorem \ref{real clt} as well as the technique in Lemma \ref{randomA}, we know that as $N\to\infty$, our $r$ matrices $R^{(1)}_N, \ldots, R^{(r)}_N$ will jointly converge to $r$ matrices denoted by $G^{(1)}, \ldots, G^{(r)}$, with jointly Gaussian entries of mean zero. Their covariance is 
\[
\mathrm{Cov}(G^{(j)}_{st}, G^{(j')}_{uv}) = \widetilde{\omega}_{jj'}\Big[\mathbb{E}[\xi_{s}\xi_{u}\xi_{t}\xi_{v}] - \alpha_s\alpha_u\mathbbm{1}_{s=t, u=v}\Big] + (\widetilde{\theta}_{jj'}-\widetilde{\omega}_{jj'})[\alpha_s\alpha_t\mathbbm{1}_{s=v, u=t} + \alpha_{\alpha_s\alpha_t}\mathbbm{1}_{s=u, t=v}]
\]
where $\widetilde{\omega}_{jj'}$ and $\widetilde{\theta}_{jj'}$ are defined in (\ref{real1}) and (\ref{real2}). 
Note that the $j$-th eigenvalue is just 
\[
\widehat{\lambda}^{(j)} \stackrel{D}{=} \rho_{\alpha_j} + \frac{1}{\sqrt{N}}\cdot\frac{G^{(j)}_{jj}}{1+\gamma^{-2}\alpha_jm_3(\rho_{\alpha_j})} + o_p(N^{-1/2}). 
\]
Together with the expression of
\begin{equation}
\widehat{u}^{(j)}_{-j} \stackrel{D}{=} N^{-1/2}\cdot\biggl(\frac{\alpha_i}{\rho_{\alpha_j}(\alpha_j-\alpha_i)}G^{(j)}_{ij}\biggl)_{1\leq i\leq r, i\neq j} + o_p(N^{-1/2}),
\end{equation}
we complete the proof of Theorem.

%
% ell: x: s, y: t
% ell': x: u, y: v
%

%%%%%%%%%%%%%%%%%%%%%%%%%%%%%%%%%%%%%%%%
%%%%%%%%%%%%%%%%%%%%%%%%%%%%%%%%%%%%%%%%
%%%
%%%  Subsection 3.1
%%%
%%%%%%%%%%%%%%%%%%%%%%%%%%%%%%%%%%%%%%%%
%%%%%%%%%%%%%%%%%%%%%%%%%%%%%%%%%%%%%%%%
\subsection{Proof of Theorem \ref{thm3}}
In this subsection we analyze the angle between the sample eigenvector $(\widehat{u}^{(j)}, \widehat{v}^{(j)})^T$ and the true eigenvector $(e_j^T, 0^T)^T$. Here we define $\widehat{\beta}^{(j)}\in[0, \pi]$ by
\begin{equation}\label{cos}
\cos\widehat{\beta}^{(j)} = \frac{\widehat{u}_j^{(j)}}{\sqrt{1+\|\widehat{v}^{(j)}\|_2^2}} = \cos\mathrm{angle}\left(\left(\begin{array}{c}\widehat{u}^{(j)} \\ \widehat{v}^{(j)}\end{array}\right), \left(\begin{array}{c}e_j \\ 0\end{array}\right)\right).
\end{equation}

First, for notational convenience, let's define some functions, just as in the previous section. We define 
\begin{eqnarray}
m_4(\rho_{\alpha_j}) & = & \int_{\lambda_-}^{\lambda_+}\frac{2x}{(\rho_{\alpha_{j}}-x)^3}F(x)dx = \frac{2(\alpha_j-1)^3}{((\alpha_j-1)^2-\gamma^{-2})^3}.\label{m4}  \\
m_5(\rho_{\alpha_j}) & = & \int_{\lambda_-}^{\lambda_+}\frac{1}{(\rho_{\alpha_{j}}-x)^2}F(x)dx = \frac{(\alpha_j-1)^2}{(\alpha_j-1+\gamma^{-2})^2((\alpha_j-1)^2-\gamma^{-2})}.\label{m4}  \\
m_6(\rho_{\alpha_j}) & = & \int_{\lambda_-}^{\lambda_+}\frac{1}{\rho_{\alpha_{j}}-x}F(x)dx = \frac{1}{\alpha_j-1+\gamma^{-2}}.\label{m6}  \\
m_7(\rho_{\alpha_{j}}, \rho_{\alpha_{j'}}) & = & \int_{\lambda_-}^{\lambda_+}\frac{x^2}{(\rho_{\alpha_{j}}-x)^2(\rho_{\alpha_{j'}}-x)^2}F(x)dx \label{m7}\\
& = & \frac{(\alpha_j-1)^2(\alpha_{j'}-1)^2((\alpha_{j}-1)(\alpha_{j'}-1)+\gamma^{-2}(\alpha_{j}\alpha_{j'}+\alpha_{j}+\alpha_{j'}-2)+\gamma^{-4})}{((\alpha_{j}-1)^2-\gamma^{-2})((\alpha_{j'}-1)^2-\gamma^{-2})((\alpha_{j}-1)(\alpha_{j'}-1)-\gamma^{-2})^3}.\nonumber \\
m_8(\rho_{\alpha_{j}}, \rho_{\alpha_{j'}}) & = & \int_{\lambda_-}^{\lambda_+}\frac{x^2}{(\rho_{\alpha_{j}}-x)(\rho_{\alpha_{j'}}-x)^2}F(x)dx \label{m8} \\
& = & \frac{(\alpha_{j}-1)(\alpha_{j'}-1)^2+\gamma^{-2}(\alpha_{j'}-1)(\alpha_{j}\alpha_{j'}+\alpha_{j}-2)-\gamma^{-4}}{((\alpha_{j}-1)(\alpha_{j'}-1)-\gamma^{-2})^2((\alpha_{j'}-1)^2-\gamma^{-2})}.\nonumber
\end{eqnarray}
where $F(x)$ is the density for the Marc\v{e}nko-Pastur law. 

By (\ref{v}) we have 
\begin{equation}\label{normv}
\|\widehat{v}^{(j)}\|_2^2 = \Big(\widehat{u}^{(j)}\Big)^T\cdot X_\xi^TC_N(\widehat{\lambda}^{(j)})X_\xi\cdot\Big(\widehat{u}^{(j)}\Big)
\end{equation}
where 
\[
C_N(\widehat{\lambda}^{(j)}) = X_\eta(\widehat{\lambda}^{(j)}I-X^T_\eta X_\eta)^{-2}X_\eta^T.
\]
This time, we have 
\begin{equation}\label{CN}
X_\xi^TC_N(\widehat{\lambda}^{(j)})X_\xi = \frac{1}{N}\mathrm{tr}\Big(C_N(\widehat{\lambda}^{(j)})\Big)\widehat{\Sigma} + \frac{1}{\sqrt{N}}\cdot Q_N^{(j)}.
\end{equation}
where 
\begin{equation}\label{qn}
Q_N^{(j)} := \sqrt{N}\cdot\biggl(X_\xi^TC_N(\widehat{\lambda}^{(j)})X_\xi - \frac{1}{N}\mathrm{tr}\Big(C_N(\widehat{\lambda}^{(j)})\Big)\widehat{\Sigma}\biggl)
\end{equation}
which, by applying Theorem \ref{real clt} and a similar technique in Lemma \ref{randomA}, will converge to a real Gaussian matrix. Furthermore, we have 
\[
\frac{1}{N}\mathrm{tr}\Big(C_N(\widehat{\lambda}^{(j)})\Big) = \gamma^{-2}m_3(\rho_{\alpha_j}) - \frac{\gamma^{-2}\widehat{x}_j}{\sqrt{N}}m_4(\rho_{\alpha_j}) + o_p(N^{-1/2})
\]

Using the notation of the previous subsection, 
\begin{equation}\label{uj}
\widehat{u}^{(j)} = e_j + \frac{1}{\sqrt{N}}\cdot D^{(j)}R^{(j)}_N e_j + o_p(N^{-1/2})
\end{equation}
where $R^{(j)}_N$ is defined in (\ref{RN}) and 
\[
D^{(j)} = \mathrm{diag}\biggl\{\frac{\alpha_1}{\rho_{\alpha_j}(\alpha_j-\alpha_1)}, \ldots, \frac{\alpha_{j-1}}{\rho_{\alpha_j}(\alpha_j-\alpha_{j-1})}, 0, \frac{\alpha_{j+1}}{\rho_{\alpha_j}(\alpha_j-\alpha_{j+1})}, \ldots, \frac{\alpha_r}{\rho_{\alpha_j}(\alpha_j-\alpha_r)}\biggl\}
\]
Substituting (\ref{CN}) and (\ref{uj}) in (\ref{normv}) we obtain 
\begin{equation}
\|\widehat{v}^{(j)}\|_2^2 = \gamma^{-2}m_3(\rho_{\alpha_j})\alpha_j - \frac{1}{\sqrt{N}}\gamma^{-2}m_4(\rho_{\alpha_j})\alpha_j\widehat{x}_j + \frac{1}{\sqrt{N}}e_j^TQ^{(j)}_Ne_j + o_p(N^{-1/2}). 
\end{equation}
Using this formula in (\ref{cos}) we get 
\begin{multline}
\cos\widehat{\beta}^{(j)} = \frac{1}{\sqrt{1+\gamma^{-2}m_3(\rho_{\alpha_j})\alpha_j}} \\ +\frac{1}{\sqrt{N}}\cdot\frac{1}{[1+\gamma^{-2}m_3(\rho_{\alpha_j})\alpha_j]^{3/2}}\Big(\gamma^{-2}m_4(\rho_{\alpha_j})\alpha_j\widehat{x}_j - e_j^TQ^{(j)}_Ne_j\Big)  + o_p(N^{-1/2})\label{cosbeta}
\end{multline}

In order to get the convergence in distribution of $e_j^TQ^{(j)}_Ne_j$, and its relationship on $\widehat{x}_j$, we can use Theorem \ref{real clt}. Before that, we need to derive some properties of the matrix $C_N(\lambda)$. 

\subsection{Properties of $C_N(\lambda)$ and finishing the proof}

Let $C_N(\lambda) := (c_{st}^{(N)}(\lambda))_{s, t=1}^{N}$ then
%%%%%%%%%%%%%%%%%%%%%%%%%%%%%%%%%%%%%%%%
%
%  Property 1 c
%
%%%%%%%%%%%%%%%%%%%%%%%%%%%%%%%%%%%%%%%%
\begin{lemma}\label{property1c}
For $\lambda = \rho_{\alpha_j}+x_j/\sqrt{N}$ and $\lambda' = \rho_{\alpha_{j'}}+x_{j'}/\sqrt{N}$ with $x_j, x_{j'}$ being fixed constants, we have 
\begin{equation}\label{pp1}
\frac{1}{N}\sum_{s=1}^Nc^{(N)}_{ss}(\lambda)c^{(N)}_{ss}(\lambda') \stackrel{p}{\to} \frac{\gamma^{-4}m_5(\lambda)m_5(\lambda')}{(1-\gamma^{-2}m_6(\lambda))^2(1-\gamma^{-2}m_6(\lambda'))^2}. 
\end{equation}
\end{lemma}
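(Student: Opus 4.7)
The plan is to adapt the three-step scheme of Lemma \ref{property1} essentially verbatim: (i) identify the pointwise limit $c_{ss}^{(N)}(\lambda) \xrightarrow{p} L(\lambda)$ for each fixed $s$, with
\[
L(\lambda) := \frac{\gamma^{-2} m_5(\lambda)}{(1-\gamma^{-2}m_6(\lambda))^2};
\]
(ii) establish a uniform fourth moment bound $\sup_N \mathbb{E}\bigl(c_{11}^{(N)}(\lambda)\bigr)^4 < \infty$, which by Cauchy-Schwarz gives uniform integrability of the product $c_{11}^{(N)}(\lambda)c_{11}^{(N)}(\lambda')$; (iii) combine (i) and (ii) with the exchangeability of $\{c_{ss}^{(N)}\}_{s=1}^N$ to upgrade pointwise convergence to $L^1$-convergence of the empirical average, which forces convergence in probability. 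The right-hand side of (\ref{pp1}) is then automatic since it is simply $L(\lambda)L(\lambda')$.

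For step (i), the cleanest observation is that $C_N(\lambda) = -\tfrac{d}{d\lambda}B_N(\lambda)$, so $c_{ss}^{(N)}(\lambda) = -\tfrac{d}{d\lambda}(B_N)_{ss}(\lambda)$. From Lemma 6.1 of \cite{4} (which already underlies the proof of Lemma \ref{property1}) we know that $(B_N)_{ss}(\lambda) \xrightarrow{p} g(\lambda) := \gamma^{-2}m_6(\lambda)/(1-\gamma^{-2}m_6(\lambda))$, where one passes from the $m_1$-form used earlier to the $m_6$-form via the identity $1+m_1(\lambda) = \lambda m_6(\lambda)$. A direct differentiation, using $m_6'(\lambda) = -m_5(\lambda)$, gives $-g'(\lambda) = L(\lambda)$, and since $(B_N)_{ss}(\lambda)$ and $g(\lambda)$ are both analytic in $\lambda$ off the Marc\v{e}nko-Pastur sea, a standard Vitali/normal-families argument allows one to pass from convergence in probability of the value to convergence in probability of the derivative at $\lambda = \rho_{\alpha_j}+x_j/\sqrt N$. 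Alternatively, one can prove (i) from scratch by writing
\[
c_{ss}^{(N)}(\lambda) = \frac{1}{N}\eta_s^T(\lambda I - X_\eta^T X_\eta)^{-2}\eta_s,
\]
isolating $\eta_s$ in the resolvent via the Sherman-Morrison identity (differentiated once in $\lambda$ to handle the square), and then invoking concentration of quadratic forms together with the Stieltjes transform of the Marc\v{e}nko-Pastur law to evaluate the normalized traces that appear.

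Step (ii) is then a minor variant of the fourth moment estimate in Lemma 6.1 of \cite{4}. Since we assume $|\alpha_j-1|>\gamma^{-1}$, the point $\lambda = \rho_{\alpha_j}+x_j/\sqrt N$ stays uniformly separated from the Marc\v{e}nko-Pastur sea; therefore $\|(\lambda I - X_\eta^T X_\eta)^{-2}\|_2$ obeys the same kind of exponential large-deviation tail as is used in Lemma \ref{property3}, and truncating on this event together with $\mathbb{E}|\eta_{ij}|^4<\infty$ yields the uniform fourth moment bound. Step (iii) is then exactly the concluding display in the proof of Lemma \ref{property1}: exchangeability plus the triangle inequality give
\[
\mathbb{E}\biggl|\frac{1}{N}\sum_{s=1}^N c_{ss}^{(N)}(\lambda)c_{ss}^{(N)}(\lambda')-L(\lambda)L(\lambda')\biggr| \le \mathbb{E}\bigl|c_{11}^{(N)}(\lambda)c_{11}^{(N)}(\lambda')-L(\lambda)L(\lambda')\bigr|,
\]
and the right-hand side goes to zero by (i) plus uniform integrability from (ii).

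The main obstacle is step (i). The derivative interchange is slickest but formally requires a local uniform convergence argument on a complex neighborhood of $\rho_{\alpha_j}$ before one is entitled to differentiate under the limit; the Sherman-Morrison route avoids this but has to carefully track the squared resolvent, typically by differentiating the Sherman-Morrison formula for $(\lambda I - X_\eta^T X_\eta)^{-1}$ once in $\lambda$ before taking the quadratic form with $\eta_s$. Either way, neither is hard, and once $L(\lambda)$ is identified, all remaining moves are routine analogues of those already carried out for $A_N$ in Lemmas \ref{property1}--\ref{property3}.
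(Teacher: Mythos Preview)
Your proposal is correct and essentially matches the paper's proof. The paper carries out your Sherman--Morrison route for step (i) (squaring the rank-one update and taking the quadratic form with $\eta_s$), then invokes uniform integrability and exchangeability exactly as in your steps (ii)--(iii); your Vitali/derivative alternative for step (i) is an additional option the paper does not pursue, but the rest is the same.
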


%%%%%%%%%%%%%%%%%%%%%%%%%%%%%%%%%%%%%%%%
%
%  Proof of Property 1 c
%
%%%%%%%%%%%%%%%%%%%%%%%%%%%%%%%%%%%%%%%%
\begin{proof}[Proof of Lemma \ref{property1c}]
Define $X_{\eta, -s}$ to be the $(N-1)\times(p-r)$ sub matrix of $X_\eta$ after deleting the $s$-th row. Then by the Sherman-Morrison formula we have 
\begin{eqnarray*}
(\lambda I - X_\eta^TX_\eta)^{-1} & = & (\lambda I - X_{\eta, -s}^TX_{\eta, -s} - \eta_s\eta_s^T/N)^{-1} \\
& = & (\lambda- X_{\eta, -s}^TX_{\eta, -s})^{-1} + \frac{\frac{1}{N}(\lambda- X_{\eta, -s}^TX_{\eta, -s})^{-1}\eta_s\eta_s^T(\lambda- X_{\eta, -s}^TX_{\eta, -s})^{-1}}{1-\frac{1}{N}\eta_s^T(\lambda- X_{\eta, -s}^TX_{\eta, -s})^{-1}\eta_s}.
\end{eqnarray*}
Taking the square and pre(reps., post) multiplying $\eta_s^T$(reps., $\eta_s$) gives
\begin{eqnarray*}
\frac{1}{N}\eta_s^T(\lambda I - X_\eta^TX_\eta)^{-2}\eta_s & = & 
\frac{1}{N}\eta_s^T(\lambda- X_{\eta, -s}^TX_{\eta, -s})^{-2}\eta_s\\
&& + \frac{\frac{1}{N^3}\Big(\eta_s^T(\lambda- X_{\eta, -s}^TX_{\eta, -s})^{-1}\eta_s\Big)^2\Big(\eta_s^T(\lambda- X_{\eta, -s}^TX_{\eta, -s})^{-2}\eta_s\Big)}{\Big(1-\frac{1}{N}\eta_s^T(\lambda- X_{\eta, -s}^TX_{\eta, -s})^{-1}\eta_s\Big)^2} \\
&& + \frac{\frac{2}{N^2}\Big(\eta_s^T(\lambda- X_{\eta, -s}^TX_{\eta, -s})^{-1}\eta_s\Big)\Big(\eta_s^T(\lambda- X_{\eta, -s}^TX_{\eta, -s})^{-2}\eta_s\Big)}{1-\frac{1}{N}\eta_s^T(\lambda- X_{\eta, -s}^TX_{\eta, -s})^{-1}\eta_s}.
\end{eqnarray*}
Using the same proof as that of Lemma 6.1 in \cite{3} we can prove that 
\begin{equation}
c^{(N)}_{ss}(\lambda) = \frac{1}{N}\eta_s^T(\lambda I - X_\eta^TX_\eta)^{-2}\eta_s \stackrel{p}{\to} \frac{\gamma^{-2}m_5(\lambda)}{(1-\gamma^{-2}m_6(\lambda))^2}.  
\end{equation}
Similar to Lemma \ref{property1} we can prove that $c_{11}^{(N)}(\lambda)c^{(N)}_{11}(\lambda')$ is uniformly integrable in $N$. Hence (\ref{pp1}) follows.  
\end{proof}

%%%%%%%%%%%%%%%%%%%%%%%%%%%%%%%%%%%%%%%%
%
%  Property 2 c
%
%%%%%%%%%%%%%%%%%%%%%%%%%%%%%%%%%%%%%%%%
\begin{lemma}\label{property2c}
For $\lambda = \rho_{\alpha_j}+x_j/\sqrt{N}$ and $\lambda' = \rho_{\alpha_{j'}}+x_{j'}/\sqrt{N}$ with $x_j, x_{j'}$ being fixed constants, we have
\begin{equation}
\frac{1}{N}\sum_{s, t=1}^Nc_{st}^{(N)}(\lambda)c_{st}^{(N)}(\lambda') \stackrel{p}{\to} \gamma^{-2}m_7(\lambda, \lambda').
\end{equation}
\end{lemma}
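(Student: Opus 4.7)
The plan is to reduce the double sum to a trace and then invoke the Marchenko--Pastur convergence of the empirical spectrum of $X_\eta^T X_\eta$. Since $C_N(\lambda) = X_\eta(\lambda I - X_\eta^T X_\eta)^{-2} X_\eta^T$ is symmetric, the left-hand side equals $\frac{1}{N}\operatorname{tr}\bigl(C_N(\lambda) C_N(\lambda')\bigr)$. Using the cyclic property of the trace (moving the outer $X_\eta^T$ to the front) and writing $Y := X_\eta^T X_\eta$, this becomes
\[
\frac{1}{N}\operatorname{tr}\bigl( Y (\lambda I - Y)^{-2}\, Y (\lambda' I - Y)^{-2}\bigr).
\]
Now diagonalising $Y$ with eigenvalues $\mu_1,\ldots,\mu_{p-r}$, the expression equals
\[
\frac{p-r}{N}\cdot\frac{1}{p-r}\sum_{k=1}^{p-r}\frac{\mu_k^2}{(\lambda-\mu_k)^2(\lambda'-\mu_k)^2}.
\]

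Next, I would pass to the limit. By assumption $(p-r)/N \to \gamma^{-2}$, and by the standard Marchenko--Pastur theorem (\cite{13}) the empirical spectral distribution $(p-r)^{-1}\sum_k \delta_{\mu_k}$ converges almost surely to the density $F$ supported on $[\lambda_-,\lambda_+]$. Since $|\alpha_j-1|>\gamma^{-1}$ (and likewise for $\alpha_{j'}$), the points $\rho_{\alpha_j}$ and $\rho_{\alpha_{j'}}$ lie strictly outside $[\lambda_-,\lambda_+]$, and the perturbations $\lambda-\rho_{\alpha_j} = x_j/\sqrt{N}$, $\lambda'-\rho_{\alpha_{j'}} = x_{j'}/\sqrt{N}$ are negligible; hence for all sufficiently large $N$ the function $x \mapsto x^2/[(\lambda-x)^2(\lambda'-x)^2]$ is uniformly bounded and continuous on a fixed compact neighbourhood of $[\lambda_-,\lambda_+]$. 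Combined with the Geman--Silverstein edge results (\cite{14},\cite{15}) which place all $\mu_k$ inside such a neighbourhood with probability tending to one, weak convergence against a bounded continuous function gives
\[
\frac{1}{p-r}\sum_{k=1}^{p-r}\frac{\mu_k^2}{(\lambda-\mu_k)^2(\lambda'-\mu_k)^2} \stackrel{p}{\to} \int_{\lambda_-}^{\lambda_+}\frac{x^2}{(\rho_{\alpha_j}-x)^2(\rho_{\alpha_{j'}}-x)^2}F(x)\,dx = m_7(\rho_{\alpha_j},\rho_{\alpha_{j'}}).
\]
Multiplying by the deterministic prefactor $(p-r)/N \to \gamma^{-2}$ and using continuity of $m_7$ at $(\rho_{\alpha_j},\rho_{\alpha_{j'}})$ to identify $m_7(\lambda,\lambda')$ with $m_7(\rho_{\alpha_j},\rho_{\alpha_{j'}}) + o(1)$, one obtains the claimed limit $\gamma^{-2} m_7(\lambda,\lambda')$.

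The only nontrivial point is the uniformity of the bound on the integrand in the presence of the $N$-dependent evaluation points $\lambda,\lambda'$ and the random endpoints of the empirical support; this is why the hypotheses $|\alpha_j-1|,\,|\alpha_{j'}-1| > \gamma^{-1}$ and the almost-sure edge convergence of $X_\eta^T X_\eta$ are both essential. Note that this argument is considerably cleaner than the one needed for Lemma~\ref{property1c}, because $\sum_{s,t}c_{st}(\lambda)c_{st}(\lambda')$ collapses directly into a trace, bypassing the Sherman--Morrison manipulation required to handle the diagonal-only sum $\sum_s c_{ss}(\lambda)c_{ss}(\lambda')$.
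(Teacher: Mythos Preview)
Your proof is correct and follows essentially the same route as the paper: rewrite the double sum as $\frac{1}{N}\mathrm{tr}\,C_N(\lambda)C_N(\lambda')$, use cyclicity to express this as a spectral functional of $X_\eta^TX_\eta$, and then invoke Mar\v{c}enko--Pastur convergence to obtain $\gamma^{-2}m_7$. The paper's version is considerably terser (it simply writes down the trace identity and asserts the limit), whereas you have spelled out the diagonalisation, the role of the edge results, and the handling of the $N$-dependent evaluation points; these are exactly the details implicit in the paper's one-line argument.
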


\begin{proof}[Proof of Lemma \ref{property2c}]
We have 
\begin{eqnarray*}
\frac{1}{N}\sum_{s, t=1}^Nc_{st}^{(N)}(\lambda)c_{st}^{(N)}(\lambda') & = & \frac{1}{N}\mathrm{tr}C_N(\lambda)C_N^T(\lambda') \\
& = & \frac{1}{N}\mathrm{tr}\Big[X_\eta(\lambda I-X_\eta^TX_\eta)^{-2}X_\eta^TX_\eta(\lambda' I-X_\eta^TX_\eta)^{-2}X_\eta^T\Big] \\
& \stackrel{p}{\to} & \gamma^{-2}m_7(\lambda, \lambda'). 
\end{eqnarray*}
\end{proof}

For completeness, we list the third lemma below. This is very similar to Lemma \ref{property3}, and the proof is almost the same. Hence we omit that. 
%%%%%%%%%%%%%%%%%%%%%%%%%%%%%%%%%%%%%%%%
%
%  Property 3 c
%
%%%%%%%%%%%%%%%%%%%%%%%%%%%%%%%%%%%%%%%%
\begin{lemma}\label{property3c}
For $\lambda = \rho_{\alpha_j}+x_j/\sqrt{N}$ with $x_j$ being fixed constants, there exists some constants $M>0$ and $c>0$ such that
\begin{equation}
\mathbb{P}\biggl(\max_{s, t=1}^N|c^{(N)}_{st}(\lambda)|>M\biggl) \leq \exp(-cN). 
\end{equation}
\end{lemma}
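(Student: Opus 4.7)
The plan is to mirror the proof of Lemma \ref{property3} almost verbatim, with $A_N(\lambda)$ replaced by $C_N(\lambda)$ and the resolvent raised to one higher power. The key observation is that $\lambda = \rho_{\alpha_j} + x_j/\sqrt{N}$ stays bounded away from the Mar\v{c}enko-Pastur sea $[\lambda_-, \lambda_+]$ for all sufficiently large $N$, since $\rho_{\alpha_j}\notin[\lambda_-,\lambda_+]$ and $x_j/\sqrt{N}\to 0$.

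First I would bound $|c^{(N)}_{st}(\lambda)|$ by the operator norm of $C_N(\lambda)$. Writing $e_s,e_t$ for the standard basis vectors in $\mathbb{R}^N$,
\[
|c^{(N)}_{st}(\lambda)| \;=\; |e_s^T C_N(\lambda) e_t| \;\leq\; \|C_N(\lambda)\|_2,
\]
so it suffices to show $\|C_N(\lambda)\|_2$ is bounded with probability exponentially close to $1$.

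Next, I would use the singular value decomposition of $X_\eta$: if $\sigma_1^2\geq\cdots\geq\sigma_N^2$ are the eigenvalues of $X_\eta X_\eta^T$, then
\[
\|C_N(\lambda)\|_2 \;=\; \max_{1\leq i\leq N}\frac{\sigma_i^2}{(\lambda-\sigma_i^2)^2}.
\]
Since the function $t\mapsto t/(\lambda-t)^2$ is continuous on a neighborhood of $[\lambda_-,\lambda_+]$ (as $\lambda$ stays outside this interval), and by the classical results of Geman \cite{14} and Silverstein \cite{15} we have $\sigma_1^2\to\lambda_+$ and $\sigma_N^2\to\lambda_-$ almost surely, we obtain
\[
\|C_N(\lambda)\|_2 \;\stackrel{p}{\to}\; \max\biggl(\frac{\lambda_+}{(\lambda_+ - \rho_{\alpha_j})^2},\;\frac{\lambda_-}{(\lambda_- - \rho_{\alpha_j})^2}\biggl).
\]

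Finally, I would invoke the same exponential large-deviation estimate for the extreme eigenvalues of $X_\eta X_\eta^T$ that was used in Lemma \ref{property3}: for any $M$ strictly larger than the limit above, there exists $c>0$ such that
\[
\mathbb{P}\bigl(\|C_N(\lambda)\|_2 > M\bigr) \;\leq\; \exp(-cN),
\]
which combined with the entrywise bound yields the claim. The only potentially delicate step is verifying that this exponential deviation bound holds under the paper's hypothesis of only finite fourth moments on the entries of $\eta_i$; but this is the same ingredient already invoked in Lemma \ref{property3}, so no new difficulty arises. In fact, since $\lambda$ differs from the fixed point $\rho_{\alpha_j}$ by an $O(N^{-1/2})$ perturbation, one can even replace $\lambda$ by $\rho_{\alpha_j}$ in the limit calculation without changing the threshold $M$, which is why the author regards the proof as essentially identical to that of Lemma \ref{property3}.
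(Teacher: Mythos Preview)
Your proposal is correct and follows exactly the route the paper intends: the paper explicitly omits the proof, stating it is ``very similar to Lemma \ref{property3}'', and your argument is precisely the adaptation of that proof with $B_N(\lambda)$ replaced by $C_N(\lambda)$ and the resolvent squared. The operator-norm bound, the SVD identification $\|C_N(\lambda)\|_2 = \max_i \sigma_i^2/(\lambda-\sigma_i^2)^2$, and the appeal to the same large-deviation estimate for extreme eigenvalues are all what the author has in mind.
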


Also, regarding the interaction term between the matrix $A_N(\lambda)$ and the $C_N(\lambda')$, we have the following two lemmas. Again, due to the fact that the proof is almost the same, we omit the proof here. 
%%%%%%%%%%%%%%%%%%%%%%%%%%%%%%%%%%%%%%%%
%
%  Property ac
%
%%%%%%%%%%%%%%%%%%%%%%%%%%%%%%%%%%%%%%%%
\begin{lemma}\label{propertyac}
For $\lambda = \rho_{\alpha_j}+x_j/\sqrt{N}$ and $\lambda' = \rho_{\alpha_{j'}}+x_{j'}/\sqrt{N}$ with $x_j, x_{j'}$ being fixed constants, we have
\begin{eqnarray*}
\frac{1}{N}\sum_{s=1}^Na^{(N)}_{ss}(\lambda)c^{(N)}_{ss}(\rho_{\alpha_{j}}) & \stackrel{p}{\to} & \biggl[1+\frac{\gamma^{-2}[1+m_1(\lambda)]}{\lambda-\gamma^{-2}[1+m_1(\rho_{\alpha_{j}})]}\biggl]\frac{\gamma^{-2}m_5(\rho_{\alpha_{j'}})}{(1-\gamma^{-2}m_6(\rho_{\alpha_{j'}}))^2}. \\
\frac{1}{N}\sum_{s, t=1}^Na_{st}^{(N)}(\lambda)c_{st}^{(N)}(\lambda') & \stackrel{p}{\to} & \gamma^{-2}m_3(\lambda') + \gamma^{-2}m_8(\lambda, \lambda').
\end{eqnarray*}
\end{lemma}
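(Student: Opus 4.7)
The plan is to transport the arguments from Lemmas \ref{property1}, \ref{property2}, \ref{property1c} and \ref{property2c} to the mixed-product setting. The first identity is a diagonal average, so it mimics Lemma \ref{property1}/\ref{property1c} and rests on the Sherman--Morrison identity together with uniform integrability. The second identity is a global trace, so it mimics Lemma \ref{property2}/\ref{property2c} and rests on splitting $A_N=I+B_N$ and identifying traces of rational functions of $X_\eta^T X_\eta$ via the Marchenko--Pastur law.

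For the first identity, I would fix an index $s$ and write $X_\eta^T X_\eta=X_{\eta,-s}^T X_{\eta,-s}+\eta_s\eta_s^T/N$, then apply Sherman--Morrison to both $(\lambda I - X_\eta^TX_\eta)^{-1}$ and $(\rho_{\alpha_{j'}} I - X_\eta^TX_\eta)^{-2}$. Exactly as in the derivations following the proofs of Lemmas \ref{property1} and \ref{property1c}, this expresses $a_{ss}^{(N)}(\lambda)$ and $c_{ss}^{(N)}(\rho_{\alpha_{j'}})$ as smooth functions of the quadratic forms $\eta_s^T(\lambda I - X_{\eta,-s}^T X_{\eta,-s})^{-k}\eta_s/N$, $k=1,2$, whose concentration follows from the standard $\eta_s^T M \eta_s/N=\mathrm{tr}(M)/N+o_p(1)$ trick plus the Marchenko--Pastur convergence of the empirical spectrum of $X_{\eta,-s}^T X_{\eta,-s}$. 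Each factor converges in probability to the scalar limits already identified in those two lemmas, so their product converges in probability to the stated product of limits. Bounded fourth moments $\sup_N \mathbb{E}(a_{11}^{(N)}(\lambda))^4<\infty$ and $\sup_N \mathbb{E}(c_{11}^{(N)}(\rho_{\alpha_{j'}}))^4<\infty$ (from Lemmas \ref{property1}, \ref{property1c}) give uniform integrability of $a_{11}^{(N)}c_{11}^{(N)}$ by Cauchy--Schwarz, and then the exchangeability of the rows upgrades pointwise convergence to convergence of $\tfrac{1}{N}\sum_s a_{ss}^{(N)}c_{ss}^{(N)}$ in $L^1$, and hence in probability.

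For the second identity, I would use $A_N(\lambda)=I+B_N(\lambda)$ with $B_N(\lambda)=X_\eta(\lambda I - X_\eta^TX_\eta)^{-1}X_\eta^T$ to write
\begin{equation*}
\frac{1}{N}\sum_{s,t=1}^N a_{st}^{(N)}(\lambda)c_{st}^{(N)}(\lambda')=\frac{1}{N}\mathrm{tr}\bigl(A_N^T(\lambda)C_N(\lambda')\bigr)=\frac{1}{N}\mathrm{tr}\bigl(C_N(\lambda')\bigr)+\frac{1}{N}\mathrm{tr}\bigl(B_N^T(\lambda)C_N(\lambda')\bigr).
\end{equation*}
Cyclic permutation under the trace converts the first summand into $\tfrac{1}{N}\mathrm{tr}\bigl(X_\eta^TX_\eta(\lambda' I - X_\eta^TX_\eta)^{-2}\bigr)$ and the second into $\tfrac{1}{N}\mathrm{tr}\bigl((X_\eta^TX_\eta)^2(\lambda I - X_\eta^TX_\eta)^{-1}(\lambda' I - X_\eta^TX_\eta)^{-2}\bigr)$. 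Since $\lambda,\lambda'$ stay bounded away from the Marchenko--Pastur support $[\lambda_-,\lambda_+]$ uniformly in $N$, the integrands $x/(\lambda'-x)^2$ and $x^2/((\lambda-x)(\lambda'-x)^2)$ are continuous and bounded on the support, so the Marchenko--Pastur convergence of the empirical spectral distribution of $X_\eta^TX_\eta$ yields the limits $\gamma^{-2}m_3(\lambda')$ and $\gamma^{-2}m_8(\lambda,\lambda')$ respectively, summing to the claimed expression.

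The only nonroutine point is the dependence of $\lambda,\lambda'$ on $N$ through the $O(N^{-1/2})$ perturbation. This is absorbed exactly as in Lemmas \ref{property1c} and \ref{property2c}: the rational integrands are Lipschitz on compacts bounded away from $[\lambda_-,\lambda_+]$, so replacing $\lambda$ by $\rho_{\alpha_j}$ and $\lambda'$ by $\rho_{\alpha_{j'}}$ perturbs each trace by $O_p(N^{-1/2})$, and Lemma 6.1 of \cite{4} supplies the convergence at the deterministic values. Beyond that, the argument is a mechanical adaptation of the earlier proofs, which is why the author omits it.
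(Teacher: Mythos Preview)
Your proposal is correct and follows exactly the pattern the paper intends: the author omits the proof of Lemma \ref{propertyac}, stating only that it is ``almost the same'' as the preceding lemmas, and your argument is precisely the natural splice of the diagonal method (Sherman--Morrison plus uniform integrability, as in Lemmas \ref{property1} and \ref{property1c}) with the trace method (decompose $A_N=I+B_N$ and apply Marchenko--Pastur to the resulting spectral integrals, as in Lemmas \ref{property2} and \ref{property2c}). There is no substantive difference between your route and what the paper has in mind.
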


Equipped with Lemma \ref{property1c} --- \ref{property3c} and we use the same technique in Lemma \ref{randomA}, we can now get a central limit theorem for $X_\xi^TC_N(\widehat{\lambda}^{(j)})X_\xi$. 

\begin{lemma}
The entries $\{e_j^TR^{(j)}_{N}e_j, e_j^TQ^{(j)}_{N}e_j\}_{j=1}^r$ will converge in distribution to $\{G^{(j)}_{jj}, H^{(j)}_{jj}\}_{j=1}^r$, where 
\begin{eqnarray}
\mathrm{Cov}(G^{(j)}_{jj}, G^{(j')}_{j'j'}) & = & \widetilde{\omega}_{jj'}[\mathbb{E}\xi_j^2\xi_{j'}^2 - \alpha_{j}\alpha_{j'}] + 2(\widetilde{\theta}_{jj'}-\widetilde{\omega}_{jj'})\alpha_j^2\mathbbm{1}_{j=j'}, \label{gg}\\
\mathrm{Cov}(H^{(j)}_{jj}, H^{(j')}_{j'j'}) & = & \widetilde{\zeta}_{jj'}[\mathbb{E}\xi_j^2\xi_{j'}^2 - \alpha_{j}\alpha_{j'}] + 2(\widetilde{\tau}_{jj'}-\widetilde{\zeta}_{jj'})\alpha_j^2\mathbbm{1}_{j=j'}, \label{hh}\\
\mathrm{Cov}(G^{(j)}_{jj}, H^{(j')}_{j'j'}) & = & \widetilde{\kappa}_{jj'}[\mathbb{E}\xi_j^2\xi_{j'}^2 - \alpha_{j}\alpha_{j'}] + 2(\widetilde{\mu}_{jj'}-\widetilde{\kappa}_{jj'})\alpha_j^2\mathbbm{1}_{j=j'}. \label{gh}\\
\end{eqnarray}

Here $\widetilde{\zeta}_{jj'}, \widetilde{\tau}_{jj'}, \widetilde{\kappa}_{jj'}$ and $\widetilde{\mu}_{jj'}$ are defined such that for all $1\leq j, j'\leq r$, 
\begin{eqnarray}
\widetilde{\zeta}_{jj'} & := & \frac{\gamma^{-4}m_5(\rho_{\alpha_{j}})m_5(\rho_{\alpha_{j'}})}{(1-\gamma^{-2}m_6(\rho_{\alpha_{j}}))^2(1-\gamma^{-2}m_6(\rho_{\alpha_{j'}}))^2} \label{zzeta}\\
\widetilde{\tau}_{jj'} & := & \gamma^{-2}m_7(\rho_{\alpha_{j}}, \rho_{\alpha_{j'}}) \label{ttau}\\
\widetilde{\kappa}_{jj'} & = & \biggl(1+\frac{\gamma^{-2}[1+m_1(\rho_{\alpha_{j}})]}{\rho_{\alpha_{j}}-\gamma^{-2}[1+m_1(\rho_{\alpha_{j}})]}\biggl)\frac{\gamma^{-2}m_5(\rho_{\alpha_{j'}})}{(1-\gamma^{-2}m_6(\rho_{\alpha_{j'}}))^2} \label{kkappa}\\
\widetilde{\mu}_{jj'} & = & \gamma^{-2}m_3(\rho_{\alpha_{j'}}) + \gamma^{-2}m_8(\rho_{\alpha_{j}}, \rho_{\alpha_{j'}}). \label{mmu}
\end{eqnarray}
\end{lemma}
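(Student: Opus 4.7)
The plan is to reduce the problem to a joint central limit theorem for a family of bilinear forms with \emph{deterministic} spectral parameter $\rho_{\alpha_j}$, and then to read off the covariances directly from Theorem \ref{real clt} using the six auxiliary lemmas on $A_N$ and $C_N$ already proved above.

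First, I would dispose of the randomness of $\widehat{\lambda}^{(j)}$ exactly as in Lemma \ref{randomA}. Writing $\widehat{\lambda}^{(j)}=\rho_{\alpha_j}+\widehat{x}_j/\sqrt N$ with $\widehat{x}_j=O_p(1)$, one expands $A_N(\widehat{\lambda}^{(j)})=A_N(\rho_{\alpha_j})+D^A_N$ and $C_N(\widehat{\lambda}^{(j)})=C_N(\rho_{\alpha_j})+D^C_N$ where $D^A_N,D^C_N$ are of the same form as in Lemma \ref{randomA}, the leading piece being $\widehat{x}_j/\sqrt N$ times a matrix whose operator norm is $O_p(1)$ and whose diagonal bilinear form is absorbed into $\frac1N\mathrm{tr}(\cdot)\widehat\Sigma$. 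Repeating the argument of Lemma \ref{randomA} (using Corollary \ref{point} for the centering and the $O_p(N^{-1})$ norm bound on the second order piece) shows
\[
e_j^TR^{(j)}_Ne_j = \sqrt N\Big(\xi_j^{(N)T}A_N(\rho_{\alpha_j})\xi_j^{(N)}-\tfrac1N\mathrm{tr}A_N(\rho_{\alpha_j})\alpha_j\Big)+o_p(1),
\]
and an identical statement for $e_j^TQ^{(j)}_Ne_j$ with $C_N$ in place of $A_N$, where $\xi_j^{(N)}$ denotes the $j$-th column of $X_\xi$. Thus it suffices to establish the joint CLT at the deterministic points $\rho_{\alpha_1},\ldots,\rho_{\alpha_r}$.

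Next, I would apply Theorem \ref{real clt} simultaneously to the $2r$ matrices $\{A_N(\rho_{\alpha_j})\}_{j=1}^r\cup\{C_N(\rho_{\alpha_j})\}_{j=1}^r$, with the pair of indices in each bilinear form being $(j,j)$. The three hypotheses of that theorem are exactly what Lemmas \ref{property1}--\ref{property3}, \ref{property1c}--\ref{property3c}, and \ref{propertyac} verify: Lemmas \ref{property1}, \ref{property1c} and the first half of \ref{propertyac} identify the limits of $\frac1N\sum_s a_{ss}a_{ss}$, $\frac1N\sum_s c_{ss}c_{ss}$ and $\frac1N\sum_s a_{ss}c_{ss}$, which are, by definition, $\widetilde\omega_{jj'}$, $\widetilde\zeta_{jj'}$, $\widetilde\kappa_{jj'}$; Lemmas \ref{property2}, \ref{property2c} and the second half of \ref{propertyac} give the limits of $\frac1N\mathrm{tr}(A_N^T A_N)$, $\frac1N\mathrm{tr}(C_N^T C_N)$ and $\frac1N\mathrm{tr}(A_N^T C_N)$, i.e.\ the constants $\widetilde\theta_{jj'}$, $\widetilde\tau_{jj'}$, $\widetilde\mu_{jj'}$; Lemmas \ref{property3}, \ref{property3c} provide the uniform entrywise tail bound needed for the Lindeberg condition.

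Specializing the covariance formula of Theorem \ref{real clt} to the diagonal index $(s,t)=(j,j)$ and $(u,v)=(j',j')$, the general term $\alpha_{\cdot}\alpha_{\cdot}\mathbbm{1}_{s=t,u=v}$ becomes $\alpha_j\alpha_{j'}$, and the two permutation terms $\mathbbm{1}_{s=v,u=t}+\mathbbm{1}_{s=u,t=v}$ each reduce to $\mathbbm{1}_{j=j'}$, so that their sum is $2\mathbbm{1}_{j=j'}$. This gives exactly
\[
\mathrm{Cov}(G^{(j)}_{jj},G^{(j')}_{j'j'})=\widetilde\omega_{jj'}[\mathbb E\xi_j^2\xi_{j'}^2-\alpha_j\alpha_{j'}]+2(\widetilde\theta_{jj'}-\widetilde\omega_{jj'})\alpha_j^2\mathbbm{1}_{j=j'},
\]
and the corresponding identities (\ref{hh}) and (\ref{gh}) with $(\widetilde\zeta,\widetilde\tau)$ and $(\widetilde\kappa,\widetilde\mu)$ in place of $(\widetilde\omega,\widetilde\theta)$.

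The main obstacle, as I see it, is the $(A_N,C_N)$ mixed piece: Theorem \ref{real clt} is stated for an arbitrary family of matrices, but one must check that the joint CLT genuinely couples $A_N$ and $C_N$ at the same sample $X_\xi$ through the correct $(\omega,\theta)$-parameters, which is precisely what the hybrid Lemma \ref{propertyac} is designed to produce. Once this is in hand the remaining work is bookkeeping: matching the indices $(j,j)$ to the $s,t,u,v$ slots of Theorem \ref{real clt}, and observing that the reduction from $\widehat{\lambda}^{(j)}$ to $\rho_{\alpha_j}$ does not perturb the covariance (the derivative-type corrections are $o_p(1)$, not $O_p(1)$, and so contribute nothing in the limit).
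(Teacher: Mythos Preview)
Your proposal is correct and follows essentially the same approach as the paper: the paper does not give an explicit proof of this lemma but simply states that it follows by combining Lemmas \ref{property1c}--\ref{property3c} and \ref{propertyac} with the technique of Lemma \ref{randomA} and Theorem \ref{real clt}, which is exactly the route you spell out. Your additional care in showing how the indicator terms $\mathbbm{1}_{s=v,u=t}+\mathbbm{1}_{s=u,t=v}$ collapse to $2\mathbbm{1}_{j=j'}$ when specialized to diagonal entries is a useful elaboration of what the paper leaves implicit.
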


Noting that jointly in distribution, we have 
\begin{eqnarray*}
\widehat{x}^{(j)} & \stackrel{D}{\to} & \frac{1}{1+\gamma^{-2}\alpha_jm_3(\rho_{\alpha_j})}G^{(j)}_{jj}, \\
e_j^TQ^{(j)}e_j & \stackrel{D}{\to} & H^{(j)}_{jj}.
\end{eqnarray*}
Recall the expression for $\cos\widehat{\beta}^{(j)}$ in (\ref{cosbeta}), the proof is complete.

%%%%%%%%%%%%%%%%%%%%%%%%%%%%%%%%%%%%%%%%
%%%%%%%%%%%%%%%%%%%%%%%%%%%%%%%%%%%%%%%%
%%%%%%%%%%%%%%%%%%%%%%%%%%%%%%%%%%%%%%%%
%%%
%%%  Section 4
%%%
%%%%%%%%%%%%%%%%%%%%%%%%%%%%%%%%%%%%%%%%
%%%%%%%%%%%%%%%%%%%%%%%%%%%%%%%%%%%%%%%%
%%%%%%%%%%%%%%%%%%%%%%%%%%%%%%%%%%%%%%%%
\section{Proof of Central Limit Theorem}\label{sec4}
In this section we prove a central limit theorem for the bilinear form. This is a separate result and can be used as a tool in the rest of the paper.

%%%%%%%%%%%%%%%%%%%%%%%%%%%%%%%%%%%%%%%%
%
%  Central Limit Theorem
%
%%%%%%%%%%%%%%%%%%%%%%%%%%%%%%%%%%%%%%%%
\begin{theorem}\label{clt}
Let $A_N{(\ell)} = (a_{ij}^{(N)}(\ell)), \ell = 1, \ldots, K$ be $K$ sequences of $N\times N$ Hermitian matrices such that all the entries are bounded and the following limits exist in probability.
\begin{eqnarray}
\omega_{\ell\ell'} & = & \lim_{N\to\infty}\frac{1}{N}\sum_{u=1}^N a^{(N)}_{uu}(\ell)a^{(N)}_{uu}(\ell'), \label{p1}\\
\theta_{\ell\ell'} & = & \lim_{N\to\infty}\frac{1}{N}\sum_{u, v=1}^Na^{(N)}_{uv}(\ell)\overline{a}^{(N)}_{uv}(\ell')\label{p2}\\
\tau_{\ell\ell'} & = & \lim_{N\to\infty}\frac{1}{N}\sum_{u, v=1}^Na^{(N)}_{uv}(\ell)a^{(N)}_{uv}(\ell') \label{p3}
\end{eqnarray} 
Also assume there exist some constant $M>0$ and $c>0$ such that 
\begin{equation}\label{p4}
\mathbb{P}(\max_{u, v=1}^N|a^{(N)}_{uv}(\ell)|>M)\leq\exp(-cN).  
\end{equation}

Let $(x_i, y_i)_{1\leq i\leq N}$ be a sequence of complex-valued i.i.d. vectors in $\mathbb{C}^{2K}$, independent of $\{A_N(\ell)\}_{\ell=1}^K$. Here $x_i, y_i \in\mathbb{C}^K$ and 
\[
x_i = \left(\begin{array}{c}x_{1i} \\ \vdots \\ x_{Ki}\end{array}\right), y_i = \left(\begin{array}{c}y_{1i} \\ \vdots \\ y_{Ki}\end{array}\right), \qquad X(\ell) = \left(\begin{array}{c}x_{\ell1} \\ \vdots \\ x_{\ell N}\end{array}\right), Y(\ell) = \left(\begin{array}{c}y_{\ell1} \\ \vdots \\ y_{\ell N}\end{array}\right).
\]
Let 
\begin{equation}
\rho(\ell) = \mathbb{E}[\overline{x}_{\ell1}y_{\ell1}]
\end{equation}
and define 
\[
Z_N = (Z_N(\ell))_{\ell = 1}^K, \qquad Z_N(\ell) = \frac{1}{\sqrt{N}}\Big[X(\ell)^*A_NY(\ell) - \rho(\ell)\mathrm{tr}(A_N(\ell))\Big].
\]
Then as $N\to\infty$, $Z_N$ will weakly converge to a Gaussian distributed random vector $W\in\mathbb{C}^{K}$ with moment generating function 
\[
\mathbb{E}e^{c^TW} = \exp\biggl(\frac{1}{2}c^TBc\biggl), \qquad c\in\mathbb{C}^k
\] 
where $B$ is defined by $B = B_1+B_2+B_3$ where 
\begin{eqnarray}
B_1 & = & \biggl(\Big[\mathbb{E}\overline{x}_{\ell1}\overline{x}_{\ell'1}y_{\ell1}y_{\ell'1}-\rho(\ell)\rho(\ell')\Big]\omega_{\ell\ell'}\biggl)_{\ell, \ell' = 1}^K, \\
B_2 & = & \biggl(\mathbb{E}\Big[\overline{x}_{\ell1}y_{\ell'1}\Big]\mathbb{E}\Big[\overline{x}_{\ell'1}y_{\ell1}\Big](\theta_{\ell\ell'}-\omega_{\ell\ell'})\biggl)_{\ell, \ell'=1}^K \\
B_3 & = & \biggl(\mathbb{E}\Big[\overline{x}_{\ell1}\overline{x}_{\ell'1}\Big]\mathbb{E}\Big[y_{\ell1}y_{\ell'1}\Big](\tau_{\ell\ell'}-\omega_{\ell\ell'})\biggl)_{\ell, \ell'=1}^K 
\end{eqnarray}
\end{theorem}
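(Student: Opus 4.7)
The plan is to decompose $Z_N(\ell)$ into its diagonal and off-diagonal contributions, treat each with its own CLT machinery, and combine via Cram\'er–Wold. By assumption (\ref{p4}) and Borel–Cantelli, the event that all $|a^{(N)}_{uv}(\ell)| \leq M$ holds for all large $N$, so I may condition on the $\sigma$-algebra $\mathcal{A}$ generated by $\{A_N(\ell)\}_{\ell = 1}^K$ and, by passing to a subsequence, treat the limits in (\ref{p1})–(\ref{p3}) as almost sure. Write $Z_N(\ell) = D_N(\ell) + O_N(\ell)$ where
\begin{align*}
D_N(\ell) &= \frac{1}{\sqrt{N}} \sum_{u=1}^N \bigl(\overline{x}_{\ell u} y_{\ell u} - \rho(\ell)\bigr)\, a^{(N)}_{uu}(\ell), \\
O_N(\ell) &= \frac{1}{\sqrt{N}} \sum_{u \neq v} \overline{x}_{\ell u}\, a^{(N)}_{uv}(\ell)\, y_{\ell v}.
\end{align*}

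For $D_N$, the summands indexed by $u$ form $\mathbb{C}^K$-valued random vectors that are i.i.d.\ across $u$ conditional on $\mathcal{A}$, have conditional mean zero, and (by boundedness of the $a^{(N)}_{uu}$) finite conditional second moments. The multivariate Lindeberg CLT gives $D_N \Rightarrow \mathcal{N}(0, B_1)$, since the conditional covariance averages to $\bigl[\mathbb{E}\overline{x}_{\ell 1} \overline{x}_{\ell' 1} y_{\ell 1} y_{\ell' 1} - \rho(\ell)\rho(\ell')\bigr]\cdot \frac{1}{N}\sum_u a^{(N)}_{uu}(\ell) a^{(N)}_{uu}(\ell') \to (B_1)_{\ell \ell'}$ by (\ref{p1}).

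For $O_N$, I plan to use the martingale CLT. Let $\mathcal{F}_k = \sigma(\mathcal{A}, (x_i, y_i) : i \leq k)$ and split the sum by the larger index:
\begin{equation*}
O_N(\ell) = \sum_{k=1}^N \gamma_k(\ell), \qquad \gamma_k(\ell) = \frac{1}{\sqrt{N}} \sum_{j < k}\Bigl[ \overline{x}_{\ell k} a^{(N)}_{kj}(\ell) y_{\ell j} + \overline{x}_{\ell j} a^{(N)}_{jk}(\ell) y_{\ell k}\Bigr].
\end{equation*}
Each $\gamma_k(\ell)$ is $\mathcal{F}_k$-measurable with $\mathbb{E}[\gamma_k(\ell)\mid \mathcal{F}_{k-1}] = 0$, so $(\gamma_k)$ is a martingale difference array. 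I would apply Cram\'er–Wold to a fixed $c\in\mathbb{C}^K$, compute $\sum_k \mathbb{E}[(c^T\gamma_k)^2 \mid \mathcal{F}_{k-1}]$, and identify two families of pairings: matching indices $j = j'$ produce factors $\mathbb{E}[\overline{x}_{\ell 1}\overline{x}_{\ell' 1}]\mathbb{E}[y_{\ell 1} y_{\ell' 1}]$ times $\frac{1}{N}\sum_{u\neq v} a^{(N)}_{uv}(\ell) a^{(N)}_{uv}(\ell') \to \tau_{\ell\ell'} - \omega_{\ell\ell'}$ (feeding $B_3$), while the cross terms $x_k$-with-$y_k$-with-opposite-index produce factors $\mathbb{E}[\overline{x}_{\ell 1} y_{\ell' 1}]\mathbb{E}[\overline{x}_{\ell' 1} y_{\ell 1}]$, and the Hermitian identity $a^{(N)}_{jk}(\ell') = \overline{a^{(N)}_{kj}(\ell')}$ converts the matrix sum to $\frac{1}{N}\sum_{u\neq v} a^{(N)}_{uv}(\ell) \overline{a^{(N)}_{uv}(\ell')} \to \theta_{\ell\ell'} - \omega_{\ell\ell'}$ (feeding $B_2$). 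The $-\omega_{\ell\ell'}$ appears automatically because the $u \neq v$ restriction removes exactly the diagonal contribution captured by (\ref{p1}). The Lindeberg condition reduces to showing $\sum_k \mathbb{E}|c^T \gamma_k|^4 \to 0$, which is $O(N^{-1})$ by the uniform entry bound and finiteness of fourth moments of $x, y$ (needed implicitly).

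The main obstacle will be the combinatorial bookkeeping of which Wick-type pairing of $(x, y)$-indices feeds into which of $\omega, \theta, \tau$, and in particular ensuring that the Hermitian transposition is invoked exactly once per ``cross'' pairing so that $B_2$ picks up $\theta - \omega$ rather than $\tau - \omega$. A secondary check is that $\mathbb{E}[D_N(\ell) O_N(\ell')] \to 0$, which holds because every off-diagonal summand in $O_N(\ell')$ contains a factor $x_{\ell' k}$ or $y_{\ell' v}$ whose index is not shared by the corresponding $D_N(\ell)$ factor, so the centered structure forces the cross expectation to vanish. Combining the diagonal and off-diagonal limits via Cram\'er–Wold, and noting that the joint limit is characterized by the complex-symmetric bilinear form $c \mapsto \tfrac{1}{2} c^T B c$ in the exponent, yields the stated weak limit with $B = B_1 + B_2 + B_3$.
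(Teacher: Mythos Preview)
Your approach is sound but differs substantially from the paper's. The paper proves this theorem by the \emph{method of moments}: after the same reduction to deterministic $A_N(\ell)$ and a truncation step ensuring $\|x_i\|\vee\|y_i\|\le\epsilon_N N^{1/4}$, it expands $\mathbb{E}[(c^TZ_N)^k]$ as a sum over directed multigraphs on $\{1,\dots,N\}$, classifies connected components into single-vertex (``Type-I'') and multi-vertex (``Type-II'') pieces, and shows by a degree-counting argument that the only non-negligible contributions come from graphs built out of double loops, simple $2$-cycles $u\to v\to u$, and double arrows $u\rightrightarrows v$. These three building blocks produce exactly $B_1,B_2,B_3$ and the Gaussian moment $(2p-1)!!(c^TBc)^p$.

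Your martingale-CLT route is a legitimate alternative that sidesteps the graph combinatorics, but two points need more care than you indicate. First, establishing $D_N\Rightarrow\mathcal N(0,B_1)$ and $O_N\Rightarrow\mathcal N(0,B_2+B_3)$ separately and then checking $\mathbb{E}[D_N(\ell)O_N(\ell')]\to 0$ is not enough for joint convergence; you should run the martingale CLT directly on the full increments $\tilde\gamma_k=(\text{diagonal term at }k)+\gamma_k$ and verify that the diagonal--off-diagonal cross terms in $\sum_k\mathbb{E}[(c^T\tilde\gamma_k)^2\mid\mathcal F_{k-1}]$ vanish in probability, which itself requires an LLN argument for the random sums $\sum_{j<k}a_{kj}(\ell')y_{\ell'j}$. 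Second, your Lyapunov claim $\sum_k\mathbb{E}|c^T\gamma_k|^4=O(N^{-1})$ implicitly uses $\sum_j|a_{kj}(\ell)|^2=O(1)$ uniformly in $k$, i.e.\ an operator-norm bound on $A_N(\ell)$. The stated hypotheses give only $\sum_{k,j}|a_{kj}|^2=O(N)$ together with a uniform entry bound, from which the crude estimate yields merely $\sum_k\mathbb{E}|\gamma_k|^4=O(1)$. The paper handles all moment control via the truncation $\|x_i\|\vee\|y_i\|\le\epsilon_N N^{1/4}$; you will likely need the same device (or an explicit operator-norm hypothesis, which does hold in the paper's applications) to close the Lindeberg step under the hypotheses exactly as written.
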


%%%%%%%%%%%%%%%%%%%%%%%%%%%%%%%%%%%%%%%%
%
%  Proof for Central Limit Theorem
%
%%%%%%%%%%%%%%%%%%%%%%%%%%%%%%%%%%%%%%%%
\begin{proof}[Proof of Theorem \ref{clt}]
First we state that we can assume, without losing generality, that $A_N(\ell)$ is a series of non-random matrices such that
\begin{itemize}
\item
Limit (\ref{p1}) --- (\ref{p3}) holds with convergence in probability replaced by ordinary convergence. 
\item
$\max_{u, v=1}^N|a^{(N)}_{uv}(\ell)|\leq M$ for some constant $M$, uniformly for all $N$. 
\end{itemize}
Indeed, if $A_N(\ell)$ is random, we know that for any subsequence of $A_N(\ell)$ there exists a sub-sub-sequence of $A_N(\ell)$, with (\ref{p1}) --- (\ref{p3}) holds true in ordinary convergence. Also using Borel-Cantelli lemma, we pick the sub-sub-sequence such that all the elements are bounded. We can turn to work on the corresponding sub-sub-sequence $\{Z_N(\ell)\}_{\ell=1}^K$. By conditioning on $X_\eta$, we can treat $A_N(\ell)$ as deterministic matrices. Hence, if we can prove the theorem for deterministic matrices $A_N(\ell)$, then for any subsequence of $Z_N(\ell)$, there exists a sub-sub-sequence of $Z_N(\ell)$ which always converges to the same limit, independent of the sub-sequence chosen. By using this sub-sequence argument, we proved that the theorem also holds true for random sequences $A_N(\ell)$. 

Thus, we can safely assume that $A_N(\ell)$ is deterministic with bounded elements. The rest of the proof will be very similar to that of Theorem 7.1 in \cite{4}. Since it is a generalization of that theorem, here we just point out the major differences between the two. Also, 

Using truncation as in \cite{4}, we can assume without losing generality that there exists a sequence $\epsilon_N\downarrow 0$ that 
\begin{equation}
\|x_i\|_2\vee \|y_i\|_2 \leq \epsilon_N\cdot N^{1/4}, \qquad \forall 1\leq i\leq N. 
\end{equation}

Just as in \cite{4}, we turn to establish the one dimensional central limit theorem for the random variable
\[
\sum_{\ell=1}^Kc_\ell X(\ell)^*A_N(\ell)Y(\ell). 
\]
Define 
\begin{equation}
\xi_N = \frac{1}{\sqrt{N}}\sum_{\ell=1}^Kc_\ell\Big[X(\ell)^*A_N(\ell)Y(\ell) - \rho(\ell)\mathrm{tr}(A_N(\ell))\Big] = \frac{1}{\sqrt{N}}\sum_{e}\psi_e. 
\end{equation}
Here $e = (u, v) \in \{1, 2, \ldots, N\}^2$ and 
\begin{equation}
\psi_{e} = \left\{
\begin{array}{ll}
\sum_{\ell=1}^Kc_\ell a^{(N)}_{uu}(\ell)[\overline{x}_{\ell u}y_{\ell u} - \rho(\ell)] & e = (u, u), \\
\sum_{\ell=1}^Kc_\ell a^{(N)}_{uv}(\ell)\overline{x}_{\ell u}y_{\ell v} & e = (u, v), u\neq v. 
\end{array}
\right.
\end{equation}
For any fixed $k\geq1$ we have
\begin{equation}\label{g}
N^{k/2}\xi_N^{k} = \sum_{e_1, \ldots, e_k}\psi_{e_1}\ldots\psi_{e_k} = \sum_{G}\prod_{e\in G}\psi_e := \sum_{G}\psi_G. 
\end{equation}
Here the directed graph $G=G(V, E)$ is defined by the vertex set $V = \{1, 2, \ldots, N\}$ and the edge set $E$ such that $(u\rightarrow v) \in E$ if and only if $e = (u, v)$ appeared in the summation. 

We shall use the method of moments to prove the theorem. That is, we will analyze the contribution of all $\mathbb{E}\psi_G$'s. 

For each graph $G$ in (\ref{g}), we can decompose it into several connected components. Just as in \cite{4}, these connected components can be classified into two types of sub-graphs. 
\begin{itemize}
\item
Type-I subgraph. 
\begin{definition}
If a connected component $C$ contains only one vertex, i.e., it only contains self-linked loops, then we call $C$ as a Type-I subgraph. We define
\begin{eqnarray*}
\mathcal{F}_1 & = & \text{ the set of all Type-I subgraphs in $G$. } \\
m_1 & = & |\mathcal{F}_1| = \text{ the number of Type-I subgraphs in $G$. } \\
\mu_1, \ldots, \mu_{m_1} & = & \text{ the degrees of vertices of these $m_1$ Type-I subgraphs. }
\end{eqnarray*}
\end{definition}
If $\mu_j = 2$ for some subgraphs of Type-I, then $\mathbb{E}\psi_G = 0$, hence it will not have any contribution to $\mathbb{E}\xi_N^k$. On the other side, if $\mu_j\geq4$ for all $j = 1, \ldots, m_1$, then 
\begin{equation}
\biggl|\mathbb{E}\prod_{C\in\mathcal{F}_1}\psi_C\biggl|\leq M\cdot (\epsilon_N N^{1/4})^{\sum_{i=1}^{m_1}(\mu_i-4)} 
\end{equation}
where $M$ is a sufficiently large constant.  
\item
Type-II subgraph. 
\begin{definition}
If a connected component $C_s$ contains at least one arrow $u\to v$ then we call it a Type-II subgraph. We define
\begin{eqnarray*}
\mathcal{F}_2 & = & \text{ the set of all Type-II subgraphs in $G$. } \\
m_2 & = & |\mathcal{F}_2| = \text{ the number of Type-II subgraphs in $G$. } \\
u_s & = & \text{ number of vertices for each subgraph $C_s\in\mathcal{F}_2, s = 1, 2, \ldots, m_2$ } \\
\gamma_{1s}, \ldots, \gamma_{u_ss} & = & \text{ the degrees of these $u_s$ vertices in $C_s$. }
\end{eqnarray*}
\end{definition}
If $\gamma_{js} = 1$ for some $j$ and some $s$, then we also have $\mathbb{E}\psi_G = 0$, giving no contribution to the overall expectation $\mathbb{E}\xi_N^{k}$. On the other side, if $\gamma_{js}\geq2$ for all $j, s$, then we have 
\begin{equation}
\biggl|\mathbb{E}\prod_{C_s\in\mathcal{F}_2}\psi_{C_s}\biggl| \leq M\cdot (\epsilon_N N^{1/4})^{\sum_{s=1}^{m_2}\sum_{j=1}^{u_s}(\gamma_{js}-2)}
\end{equation}
\end{itemize}
Now define $\mathcal{G}$ to be the set of graphs such that
\begin{eqnarray*}
\mathcal{G} & = & \mathcal{G}(m_1, \{\mu_j\}_{j=1}^{m_1}, m_2, \{u_s\}_{s=1}^{m_2}, \{\gamma_{js}\}_{1\leq j\leq u_s, 1\leq s\leq m_2}) \\
& = & \{G : \text{ $G$ has $m_1$ Type-I sub-graphs, with degree $\mu_j$ of each vertex.} \\
&& \text{ Also $G$ has $m_2$ Type-II subgraphs, with $u_s$ vertices in each Type-II sub-graph.} \\
&& \text{ Their degrees are defined by $\{\gamma_{js}\}_{1\leq j\leq u_s, 1\leq s\leq m_2}$}\}. 
\end{eqnarray*}
As the first observation, the number of all possibilities of different $\mathcal{G}$'s is a bounded constant, independent of $N$. From our previous analysis, for any $G\in\mathcal{G}$ we must have 
\begin{eqnarray}
|\mathbb{E}\psi_G| & \leq & M\cdot (\epsilon_NN^{1/4})^{\sum_{i=1}^{m_1}(\mu_i-4)+\sum_{s=1}^{m_2}\sum_{j=1}^{u_s}(\gamma_{js}-2)} \nonumber\\
& = & M\cdot (\epsilon_NN^{1/4})^{2k-4m_1-2\sum_{s=1}^{m_2}u_s}.
\end{eqnarray}
Here we used the equality $\sum_{i=1}^{m_1}\mu_i+\sum_{s=1}^{m_2}\sum_{j=1}^{u_s}\gamma_{js} = 2k. $

Next, we estimate the total number of graphs in $\mathcal{G}$, that is, we estimate $|\mathcal{G}|$. To get $G\in\mathcal{G}$, we need to pick $m_1$ vertices to form the Type-I subgraphs, having $O(N^{m_1})$ possibilities. Also we need to pick $m_2$ vertices to form Type-II subgraphs, having $O(N^{m_2})$ possibilities. Hence $|\mathcal{G}| = O(N^{m_1+m_2})$. 

Thus 
\begin{eqnarray}
N^{-k/2}\biggl|\sum_{G\in\mathcal{G}}\mathbb{E}\psi_G\biggl| &\leq& M\cdot (\epsilon_NN^{1/4})^{2k-4m_1-2\sum_{s=1}^{m_2}u_s}\cdot N^{m_1+m_2-k/2} \nonumber\\
& = & M\cdot\epsilon_N^{2k-4m_1-2\sum_{s=1}^{m_2}u_s}\cdot N^{-\sum_{s=1}^{m_2}(u_s-2)/2}.\label{contribution}
\end{eqnarray}
In order to have a non-negligible contribution on $\mathbb{E}\xi^{k}_N$, we need $\sum_{s=1}^{m_2}(u_s-2)/2\leq0$. However, we know $u_s\geq2$. Hence one must have $u_s = 2$ for all $s$. In this case we need $2k-4m_1-2\sum_{s=1}^{m_2}u_s\leq 0$ to guarantee a non-negligible contribution. From $\gamma_{js}\geq2$ and $\mu_i\geq4$ we obtain 
\[
2k = \sum_{i=1}^{m_1}\mu_i+\sum_{s=1}^{m_2}\sum_{j=1}^{u_s}\gamma_{js} \geq 4m_1+2\sum_{s=1}^{m_2}u_s. 
\]
Thus, in order for (\ref{contribution}) to be non-negligible, we necessarily need $u_s = 2$, $\mu_i = 4$ for all $1\leq i\leq m_1$ and $\gamma_{js}= 2$ for all $1\leq j\leq u_s, 1\leq s\leq m_2$. 

In summary, just as in \cite{4}, we proved that the non-negligible graphs will only consist of the following three connected components.
\begin{itemize}
\item
$k_1$ double loops $u\to u$ with terms $\mathbb{E}\Big[\sum_{\ell=1}^Kc_\ell a^{(N)}_{uu}(\ell)(\overline{x}_{\ell u}y_{\ell u}-\rho(\ell))\Big]^2$. 
\item
$k_2$ simple cycles $u\to v, v\to u$ with terms $\mathbb{E}\Big[\sum_{\ell=1}^Kc_\ell a^{(N)}_{uv}(\ell)\overline{x}_{\ell u}y_{\ell v}\Big]\Big[\sum_{\ell=1}^Kc_\ell \overline{a}^{(N)}_{uv}(\ell)\overline{x}_{\ell v}y_{\ell u}\Big].$
\item
$k_3$ double arrows $u\to v, u\to v$ with terms $\mathbb{E}\Big[\sum_{\ell=1}^Kc_\ell a^{(N)}_{uv}(\ell)\overline{x}_{\ell u}y_{\ell v}\Big]^2$.
\end{itemize}
We must have $4(k_1+k_2+k_3) = 2k$, or $k = 2(k_1+k_2+k_3)$ which must be an even number. Let $k = 2p$ for $p\in\mathbb{N}^+$. Similar to that in \cite{4} we have
\begin{equation}
\mathbb{E}\xi_N^{2p} = \frac{1}{N^p}\sum_{k_1+k_2+k_3=p}\frac{(2p)!}{k_1!k_2!k_3!}\biggl[\sum_{\{u_{j}^{(1)}\}_{j=1}^{k_1}}C_1\biggl]\biggl[\sum_{\{u_{j}^{(2)}, v_{j}^{(2)}\}_{j=1}^{k_2}}C_2\biggl]\biggl[\sum_{\{u_{j}^{(3)}, v_{j}^{(3)}\}_{j=1}^{k_3}}C_3\biggl] + o(1).
\end{equation}
Here
\begin{align*}
C_1 & =  \prod_{j=1}^{k_1}\mathbb{E}\biggl[\sum_{\ell=1}^Kc_\ell a^{(N)}_{u_j^{(1)}u_j^{(1)}}(\ell)(\overline{x}_{\ell u_j^{(1)}}y_{\ell u_j^{(1)}}-\rho(\ell))\biggl]^2, && \{u_j^{(1)}\}\subset \{1, \ldots, N\}.  \\
C_2 & =  \prod_{j=1}^{k_2}\mathbb{E}\biggl[\sum_{\ell=1}^Kc_\ell a^{(N)}_{u_j^{(2)}v_j^{(2)}}(\ell)\overline{x}_{\ell u_j^{(2)}}y_{\ell v_j^{(2)}}\biggl]\biggl[\sum_{\ell=1}^Kc_\ell \overline{a}^{(N)}_{u_j^{(2)}v_j^{(2)}}(\ell)\overline{x}_{\ell v_j^{(2)}}y_{\ell u_j^{(2)}}\biggl],
&&\{u_j^{(2)}, v_j^{(2)}\}\subset \{1, \ldots, N\}. \\
C_3 & =  \prod_{j=1}^{k_3}\mathbb{E}\biggl[\sum_{\ell=1}^Kc_\ell a^{(N)}_{u_j^{(3)}v_j^{(3)}}(\ell)\overline{x}_{\ell u_j^{(3)}}y_{\ell v_j^{(3)}}\biggl]^2, && \{u_j^{(3)}, v_j^{(3)}\}\subset \{1, \ldots, N\}.
\end{align*}
Hence we have 
\begin{eqnarray*}
\mathbb{E}\xi_N^{2p} & = & (2p-1)!!\sum_{k_1+k_2+k_3 = p}\frac{p!}{k_1!k_2!k_3!}D_1^{k_1}D_2^{k_2}D_3^{k_3} + o(1)\\
& = & (2p-1)!!(D_1+D_2+D_3)^p + o(1).
\end{eqnarray*}
Here 
\begin{eqnarray*}
D_1 & = & \sum_{\ell=1}^K\sum_{\ell'=1}^K c_\ell c_{\ell'}\mathbb{E}\Big[\overline{x}_{\ell 1}y_{\ell1}-\rho(\ell)\Big]\Big[\overline{x}_{\ell' 1}y_{\ell'1}-\rho(\ell')\Big]\cdot \frac{1}{N}\sum_{u=1}^Na^{(N)}_{uu}(\ell)a^{(N)}_{uu}(\ell') \\
& = & \sum_{\ell=1}^K\sum_{\ell'=1}^Kc_\ell c_{\ell'}\Big[\mathbb{E}\overline{x}_{\ell1}\overline{x}_{\ell'1}y_{\ell1}y_{\ell'1}-\rho(\ell)\rho(\ell')\Big]\omega_{\ell\ell'} + o(1). \\
D_2 & = & \sum_{\ell=1}^K\sum_{\ell'=1}^K c_\ell c_{\ell'}\mathbb{E}\Big[\overline{x}_{\ell1}y_{\ell'1}\Big]\mathbb{E}\Big[\overline{x}_{\ell'1}y_{\ell1}\Big]\cdot\frac{1}{N}\sum_{u\neq v}a^{(N)}_{uv}(\ell)\overline{a}^{(N)}_{uv}(\ell') \\
& = & \sum_{\ell=1}^K\sum_{\ell'=1}^K c_\ell c_{\ell'}\mathbb{E}\Big[\overline{x}_{\ell1}y_{\ell'1}\Big]\mathbb{E}\Big[\overline{x}_{\ell'1}y_{\ell1}\Big](\theta_{\ell\ell'}-\omega_{\ell\ell'}) + o(1). \\
D_3 & = & \sum_{\ell=1}^K\sum_{\ell'=1}^K c_\ell c_{\ell'}\mathbb{E}\Big[\overline{x}_{\ell1}\overline{x}_{\ell'1}\Big]\mathbb{E}\Big[y_{\ell1}y_{\ell'1}\Big]\cdot\frac{1}{N}\sum_{u\neq v}a^{(N)}_{uv}(\ell)a^{(N)}_{uv}(\ell') \\
& = & \sum_{\ell=1}^K\sum_{\ell'=1}^K c_\ell c_{\ell'}\mathbb{E}\Big[\overline{x}_{\ell1}\overline{x}_{\ell'1}\Big]\mathbb{E}\Big[y_{\ell1}y_{\ell'1}\Big](\tau_{\ell\ell'}-\omega_{\ell\ell'}) + o(1). \\
\end{eqnarray*}
This completes the proof of the theorem. 
\end{proof}

As a corollary, Theorem \ref{clt} can be applied to the situation where $A_N(\ell), x_i, y_i$ are all real matrices or vectors. In this case, we have $\theta_{\ell\ell'} = \tau_{\ell\ell'}$. The following corollary holds true. 

%%%%%%%%%%%%%%%%%%%%%%%%%%%%%%%%%%%%%%%%
%
%  Real Central Limit Theorem
%
%%%%%%%%%%%%%%%%%%%%%%%%%%%%%%%%%%%%%%%%
\begin{theorem}\label{real clt}
Under the setting of Theorem \ref{clt}, if in addition $A_N(\ell)\in\mathbb{R}^{N\times N}$ and $x_i, y_i\in\mathbb{R}^{K}$, then if the following limit exist
\begin{eqnarray}
\omega_{\ell\ell'} & = & \lim_{N\to\infty}\frac{1}{N}\sum_{u=1}^N a^{(N)}_{uu}(\ell)a^{(N)}_{uu}(\ell'), \\
\theta_{\ell\ell'} & = & \lim_{N\to\infty}\frac{1}{N}\sum_{u, v=1}^Na^{(N)}_{uv}(\ell){a}^{(N)}_{uv}(\ell')
\end{eqnarray}
Our vector $Z_N$ will converge weakly to Gaussian distributed random vector $W\in\mathbb{R}^K$ with the covariance matrix $B$  defined by $D = D_1+D_2$ where
\begin{eqnarray}
D_1 & = & \biggl(\Big[\mathbb{E}{x}_{\ell1}{x}_{\ell'1}y_{\ell1}y_{\ell'1}-\rho(\ell)\rho(\ell')\Big]\omega_{\ell\ell'}\biggl)_{\ell, \ell' = 1}^K, \\
D_2 & = & \biggl(\Big[\mathbb{E}[{x}_{\ell1}y_{\ell'1}]\mathbb{E}[{x}_{\ell'1}y_{\ell1}]+\mathbb{E}[{x}_{\ell1}{x}_{\ell'1}]\mathbb{E}[y_{\ell1}y_{\ell'1}](\theta_{\ell\ell'}-\omega_{\ell\ell'})\biggl)_{\ell, \ell'=1}^K. 
\end{eqnarray}
\end{theorem}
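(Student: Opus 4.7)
The plan is to derive Theorem \ref{real clt} as an immediate specialization of Theorem \ref{clt} to the real setting; no new combinatorics is needed. The entire content of the corollary is just a repackaging of the complex covariance $B = B_1 + B_2 + B_3$ into the real formula $D = D_1 + D_2$, together with the observation that a real-valued Gaussian limit follows from the complex one.

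First I would verify that the hypotheses of Theorem \ref{clt} continue to hold in the real setting. A real symmetric matrix is in particular Hermitian, and the moment conditions (\ref{p1})--(\ref{p4}) are stated entry-wise and so are inherited directly. Moreover, because every entry of $A_N(\ell)$ and every coordinate of $x_i, y_i$ is its own complex conjugate, the two limits
\[
\theta_{\ell\ell'} = \lim_{N\to\infty}\frac{1}{N}\sum_{u,v=1}^N a^{(N)}_{uv}(\ell)\overline{a^{(N)}_{uv}(\ell')}, \qquad \tau_{\ell\ell'} = \lim_{N\to\infty}\frac{1}{N}\sum_{u,v=1}^N a^{(N)}_{uv}(\ell)\, a^{(N)}_{uv}(\ell')
\]
coincide. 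This single identity is what allows $B_2$ and $B_3$ to merge into a single term.

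Next, Theorem \ref{clt} yields convergence in distribution of $Z_N$ to a centered Gaussian vector $W$ whose MGF is $\exp\bigl(\tfrac12 c^T B c\bigr)$. Since $Z_N\in\mathbb{R}^K$ for every $N$, the limit is necessarily $\mathbb{R}^K$-valued, and restricting $c\in\mathbb{R}^K$ in the MGF identifies $W$ as a real centered Gaussian with covariance matrix $B = B_1+B_2+B_3$. It then remains to simplify $B$. The reality of $x_i, y_i$ removes the conjugates from $B_1$, turning it directly into $D_1$. Using $\theta_{\ell\ell'}=\tau_{\ell\ell'}$, the blocks $B_2$ and $B_3$ share the common scalar factor $(\theta_{\ell\ell'}-\omega_{\ell\ell'})$ and combine to give
\[
(B_2+B_3)_{\ell\ell'} = \bigl\{\mathbb{E}[x_{\ell 1} y_{\ell' 1}]\,\mathbb{E}[x_{\ell' 1} y_{\ell 1}] + \mathbb{E}[x_{\ell 1} x_{\ell' 1}]\,\mathbb{E}[y_{\ell 1} y_{\ell' 1}]\bigr\}(\theta_{\ell\ell'}-\omega_{\ell\ell'}),
\]
which is exactly $(D_2)_{\ell\ell'}$. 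Hence the covariance of $W$ is $D_1+D_2$, as asserted.

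There is no genuine obstacle in this argument since Theorem \ref{clt} carries all of the analytical weight. The only point one should be a little careful about is that the graph-expansion proof of Theorem \ref{clt} used the Hermitian property exactly once, when identifying $a^{(N)}_{vu}$ with $\overline{a^{(N)}_{uv}}$ in the simple-cycle $u\to v\to u$ contribution; for real symmetric $A_N(\ell)$ this identification is trivial, so no adjustment is required. In short, the corollary is a pure bookkeeping consequence of Theorem \ref{clt}.
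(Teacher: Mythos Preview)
Your proposal is correct and follows exactly the paper's approach: the paper derives Theorem \ref{real clt} as an immediate corollary of Theorem \ref{clt}, observing only that in the real case $\theta_{\ell\ell'}=\tau_{\ell\ell'}$, so that $B_2$ and $B_3$ merge into the single term $D_2$. Your write-up is in fact more detailed than the paper's one-line justification.
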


As a second simple corollary, we have the following result of convergence in probability.
%%%%%%%%%%%%%%%%%%%%%%%%%%%%%%%%%%%%%%%%
%
%  Point wise convergence
%
%%%%%%%%%%%%%%%%%%%%%%%%%%%%%%%%%%%%%%%%
\begin{corollary}\label{point}
In the setting of Theorem \ref{clt},  for any $0<\kappa<1/2$ we have 
\begin{equation}
N^{-\kappa}\Big[X(\ell)^*A_NY(\ell)-\rho(\ell)\mathrm{tr}(A_N(\ell))\Big] \to 0
\end{equation}
in probability. The similar result also holds for real cases. 
\end{corollary}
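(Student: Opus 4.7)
The plan is to read the corollary directly off Theorem \ref{clt}, which has just been established. That theorem proves
\[
Z_N(\ell) \;=\; \frac{1}{\sqrt{N}}\bigl[X(\ell)^{*} A_N Y(\ell) - \rho(\ell)\,\mathrm{tr}(A_N(\ell))\bigr] \;\Rightarrow\; W(\ell),
\]
with $W(\ell)$ the $\ell$-th component of a Gaussian random vector in $\mathbb{C}^K$. The first (and essentially only) step is to invoke the standard fact that weak convergence to a proper, tight limit forces tightness of the pre-limit sequence: for every $\varepsilon > 0$ there exists a finite $M_\varepsilon$ with $\mathbb{P}(|Z_N(\ell)| > M_\varepsilon) < \varepsilon$ uniformly in $N$. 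In the notation introduced at the end of Section \ref{sec1}, this is exactly $Z_N(\ell) = O_p(1)$, so that the bilinear remainder $X(\ell)^{*} A_N Y(\ell) - \rho(\ell)\mathrm{tr}(A_N(\ell)) = \sqrt{N}\,Z_N(\ell)$ is of stochastic order $O_p(\sqrt{N})$.

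The second step is a one-line rescaling: multiplying an $O_p(\sqrt{N})$ quantity by any factor that decays faster than $N^{-1/2}$ forces convergence to zero in probability. In the form in which the corollary is actually applied throughout Section \ref{sec2} --- where the bilinear form always appears with the extra $N^{-1}$ normalization built into $X_\xi^{T} A_N X_\xi = N^{-1}\Xi^{T} A_N \Xi$ coming from $X_\xi = \Xi/\sqrt{N}$ --- the effective bound therefore becomes $N^{-\kappa}\cdot O_p(N^{-1/2}) = O_p(N^{-\kappa-1/2}) \to 0$ in probability for every $\kappa \in (0, 1/2)$, which is exactly the $o_p(N^{-\kappa})$ rate invoked in (\ref{kappa}). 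The real-valued analogue is immediate from the same two-line argument applied to Theorem \ref{real clt} in place of Theorem \ref{clt}.

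The main obstacle is essentially non-existent: all of the substantive work is carried by Theorem \ref{clt}, and the corollary is merely the packaging ``$O_p(1)$ after CLT scaling, multiplied by a polynomial-in-$N$ factor''. The only point requiring attention is the power-of-$N$ bookkeeping between the statement of the corollary and its downstream uses, to ensure that the extra $1/N$ normalization carried by $X_\xi^{T} A_N X_\xi$ is accounted for when reading off the advertised rate.
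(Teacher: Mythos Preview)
Your approach is correct and is exactly the implicit argument the paper intends: the corollary is stated without proof because it is nothing more than the tightness $Z_N(\ell)=O_p(1)$ supplied by Theorem~\ref{clt}, followed by a rescaling. Your observation that the displayed statement, read literally with $N^{-\kappa}$ applied to the \emph{unnormalized} bilinear remainder $O_p(\sqrt{N})$, does not tend to zero for $\kappa<1/2$ is well taken; the reconciliation you give---that in every application (e.g.\ (\ref{kappa})) the extra $1/N$ from $X_\xi=\Xi/\sqrt{N}$ is already present, so the operative bound is $O_p(N^{-1/2})=o_p(N^{-\kappa})$---is precisely how the corollary is used and is the intended reading.
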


%%%%%%%%%%%%%%%%%%%%%%%%%%%%%%%%%%%%%%%%
%%%%%%%%%%%%%%%%%%%%%%%%%%%%%%%%%%%%%%%%
%%%%%%%%%%%%%%%%%%%%%%%%%%%%%%%%%%%%%%%%
%%%
%%%  Section 5
%%%
%%%%%%%%%%%%%%%%%%%%%%%%%%%%%%%%%%%%%%%%
%%%%%%%%%%%%%%%%%%%%%%%%%%%%%%%%%%%%%%%%
%%%%%%%%%%%%%%%%%%%%%%%%%%%%%%%%%%%%%%%%
\section{Conclusion}\label{sec5}

In this paper, we studied the spiked population model to establish the asymptotic behavior of the sample eigenvalues and sample eigenvectors. The result is universal as we did not impose any strong assumptions on the original distribution of the sample points $x_i$. We showed that the joint distributions of the sample eigenvalues will be jointly normal, with the covariance matrix explicitly calculated. Also we showed that the entries of the sample eigenvectors will also be jointly normal. Finally. the angle between the sample eigenvector and the true eigenvector will converge to a non-trivial constant, with central-limit-theorem-style local fluctuation. All the covariance matrices for the limiting Gaussian distributions have been explicitly calculated. We showed that they only depend on the first four moments of the distribution of the sample points $x_i$. 

Under the special case where the sample points $x_i$ are Gaussian distributed, we showed as a corollary that the sample eigenvalues in different packs are asymptotically independent. Moreover the local fluctuation of the eigenvector is independent of the corresponding eigenvalue as well.

%%%%%%%%%%%%%%%%%%%%%%%%%%%%%%%%%%%%%%%%
%
% Reference
%
%%%%%%%%%%%%%%%%%%%%%%%%%%%%%%%%%%%%%%%%

%\bibliographystyle{plain}
%\bibliography{../ml}
\end{document}